\documentclass[11pt]{article}

\usepackage[T1]{fontenc}
\usepackage{lmodern}
\usepackage[margin=1in]{geometry}
\usepackage{amsmath,amssymb,amsthm}
\usepackage{mathtools}
\mathtoolsset{showonlyrefs}
\usepackage{graphicx}
\usepackage{booktabs,tabularx,array}
\usepackage{enumitem}
\usepackage{float}
\usepackage{microtype}
\usepackage[font=small,labelfont=bf]{caption}
\usepackage{xcolor}
\usepackage[authoryear,round]{natbib}

\definecolor{linkcol}{HTML}{0A4D8C}
\definecolor{citecol}{HTML}{0F6B5F}
\definecolor{urlcol}{HTML}{0E5F9E}
\usepackage{hyperref}
\hypersetup{
  colorlinks=true,
  linkcolor=linkcol,
  citecolor=citecol,
  urlcolor=urlcol,
  breaklinks=true,
  pdfborder={0 0 0},
  pdftitle={Wasserstein Policy Gradient for Entropy-Regularized Linear-Quadratic Control},
  pdfauthor={Zhaoyu Zhu, Rui Gao, and Shuang Li},
  pdfsubject={Exact closure on linear-Gaussian policies and global convergence of Wasserstein policy gradient for entropy-regularized linear-quadratic control},
  pdfkeywords={Wasserstein policy gradient, linear-quadratic control, entropy regularization, optimal transport, global convergence}
}

\setlist[itemize]{leftmargin=1.45em,itemsep=0.25em,topsep=0.4em}
\setlist[enumerate]{leftmargin=1.75em,itemsep=0.2em,topsep=0.4em}

\newtheorem{theorem}{Theorem}[section]
\newtheorem{proposition}{Proposition}[section]
\newtheorem{lemma}{Lemma}[section]
\newtheorem{corollary}{Corollary}[section]
\theoremstyle{definition}
\newtheorem{definition}{Definition}[section]

\newtheorem{assumption}{Assumption}[section]
\theoremstyle{remark}
\newtheorem{remark}{Remark}[section]

\newcommand{\KL}{\mathrm{KL}}
\newcommand{\Tr}{\operatorname{Tr}}
\newcommand{\cA}{\mathcal{A}}
\newcommand{\cT}{\mathcal{T}}
\newcommand{\cD}{\mathcal{D}}
\newcommand{\cG}{\mathcal{G}}
\newcommand{\dd}{\mathrm d}
\newcommand{\E}{\mathbb E}
\newcommand{\R}{\mathbb{R}}
\newcommand{\cN}{\mathcal{N}}
\newcommand{\bbS}{\mathbb{S}}

\newcommand{\norm}[1]{\left\lVert #1 \right\rVert}
\newcommand{\Vtwo}{\mathcal{V}_2}
\newcommand{\Cov}{\operatorname{Cov}}
\newcommand{\Pop}{\mathcal{P}}
\newcommand{\Scorr}{\mathsf{S}}
\newcommand{\proofpart}[1]{\par\medskip\noindent\textbf{\emph{#1.}}\ }

\begin{document}

\title{Wasserstein Policy Gradient for Entropy-Regularized Linear-Quadratic Control}

\author{
Zhaoyu Zhu\textsuperscript{1}\quad
Rui Gao\textsuperscript{2}\quad
Shuang Li\textsuperscript{3}\\[0.55em]
\small
\textsuperscript{1}University of Illinois Urbana-Champaign \quad
\textsuperscript{2}The University of Texas at Austin\\
\small
\textsuperscript{3}The Chinese University of Hong Kong, Shenzhen\\[0.35em]
\small
\texttt{zhaoyuz4@illinois.edu}\quad
\texttt{rui.gao@mccombs.utexas.edu}\quad
\texttt{lishuang@cuhk.edu.cn}
}
\date{}

\maketitle

\begin{abstract}
Wasserstein policy gradient (WPG) updates state-conditional action laws by transport in the action space. We study entropy-regularized discounted linear-quadratic (LQ) control. A Bellman verification argument shows that the unrestricted problem has a linear-Gaussian optimal policy, and the discounted-occupancy-weighted statewise Wasserstein gradient is tangent to this policy class. WPG therefore reduces exactly to a finite-dimensional ODE for the feedback gain and action covariance. We prove that this ODE is globally well posed and converges exponentially from every admissible initialization. For each fixed LQ problem, the exponent has a positive limit as the entropy temperature tends to zero and contains no perturbative factor of the form \(\exp(-c/\tau)\), while retaining the usual dependence on the conditioning of the control problem. 
% The covariance estimate is naturally weighted by the entropy temperature.
\end{abstract}

% \noindent\textbf{Keywords:}
% Wasserstein policy gradient; linear-quadratic control; entropy regularization;
% policy optimization; optimal transport; global convergence.
% \medskip

\section{Introduction}\label{sec:intro}

The geometry used to update a policy is central to continuous-control reinforcement learning. Classical policy gradient optimizes a finite-dimensional actor in Euclidean parameter space. Natural policy gradient and mirror descent instead use information geometry, usually through the KL divergence. Wasserstein policy gradient (WPG) takes a different approach: for each state, it transports the conditional action distribution in the action space. With entropy regularization, the resulting Wasserstein policy gradient flow is a Fokker-Planck equation whose drift is generated by the action gradient of the current soft action-value function and whose diffusion is generated by entropy. This formulation connects policy optimization with Wasserstein gradient flows, particle methods, and optimal-transport trust-region and proximal methods
\citep{zhang2018wgf,arbel2020kwng,moskovitz2020efficientwng,
terpin2022ottrpo,song2024metrictr,pfau2025wpo,zhu2026wasserstein}.

General WPG convergence theory is largely based on Langevin-dynamics analysis of the policy distribution and its moving Gibbs target \citep{zhu2026globalwpg,siska2026convergence}. These results rely on boundedness assumptions on rewards that exclude unbounded quadratic costs and, importantly, do not establish well-posedness of the underlying continuous-time flow. 
The convergence rate is expressed in terms of a uniform log-Sobolev coefficient for the moving Gibbs law and deteriorates exponentially in the rate of $O(\frac{1}{\tau})\exp(O(-\frac{1}{\tau}))$ as the entropy temperature \(\tau\) tends to zero. 

In this paper, we show that these general results can be sharpened in the linear-quadratic (LQ) setting. 
We proceed in two steps. First, we solve the unrestricted entropy-regularized control problem over admissible stationary randomized Markov policies. A Bellman verification argument, based on the soft Bellman equation and the discounted Riccati equation, identifies a linear-Gaussian policy
$
    \pi_\star(\cdot\mid x)=\cN(-K_\star x,\Sigma_\star)
$
is optimal over the full admissible policy class. Therefore, without loss of optimality, we formulate the learning problem over
\[
    \pi_{K,\Sigma}(\cdot\mid x)=\cN(-Kx,\Sigma).
\]
Note that this is an optimality statement rather than a distributional assumption on the state process: the initial state and disturbances need not be Gaussian.

Second, we make the policy-space geometry explicit. Tangent perturbations are represented by action-space continuity equations, and their squared norm is defined as the corresponding statewise 2-Wasserstein kinetic energy averaged with respect to the normalized discounted state-occupancy measure. Combining this metric with the policy-gradient theorem yields the Wasserstein policy-gradient flow \citep{pfau2025wpo,zhu2026globalwpg,zhu2026wasserstein,siska2026convergence}
\[
\partial_t\pi_t(u\mid x)
=
\nabla_u\cdot\left(
\pi_t(u\mid x)
\nabla_u\bigl(Q^{\pi_t}(x,u)+\tau\log\pi_t(u\mid x)\bigr)
\right).
\]
For every admissible linear-Gaussian policy, the soft action-value function is quadratic in the action. Its action gradient is therefore affine, and the WPG vector field is tangent to the linear-Gaussian policy class. Under the parameterization \((K,\Sigma)\), the flow closes exactly as
\begin{equation}\label{eq:closed_parameter_flow}
    \dot K=-2E_K,
    \qquad
    \dot\Sigma=-2M_K\Sigma-2\Sigma M_K+2\tau I.
\end{equation}
Thus, the gain-covariance ODE is an exact representation of the policy-space gradient flow on the linear-Gaussian class, rather than a projection or approximation. Every solution of this closed parameter system generates a linear-Gaussian solution of WPGF. This exact closure allows the convergence analysis to proceed through the global well-posedness of the parameter ODE, without requiring a separate uniqueness theory for all solutions of the unrestricted nonlinear Fokker-Planck equation.

The closed parameter equations also make the contrast with Fisher-Rao flow transparent. Writing \(E_K\) for the feedback Bellman residual, WPG satisfies \(\dot K=-2E_K\), while Fisher-Rao flow satisfies \(\dot K=-2\Sigma E_K\). Fisher-Rao therefore scales learning in each action direction by the current exploration variance in that direction. If a fixed total variance is spread over many redundant action coordinates, the variance in task-relevant directions decreases and Fisher-Rao slows down, whereas the WPG gain equation is unchanged. Figure~\ref{fig:ambient_dimension} illustrates this difference.

\begin{figure}[!h]
    \centering
    \includegraphics[width=0.7\textwidth]{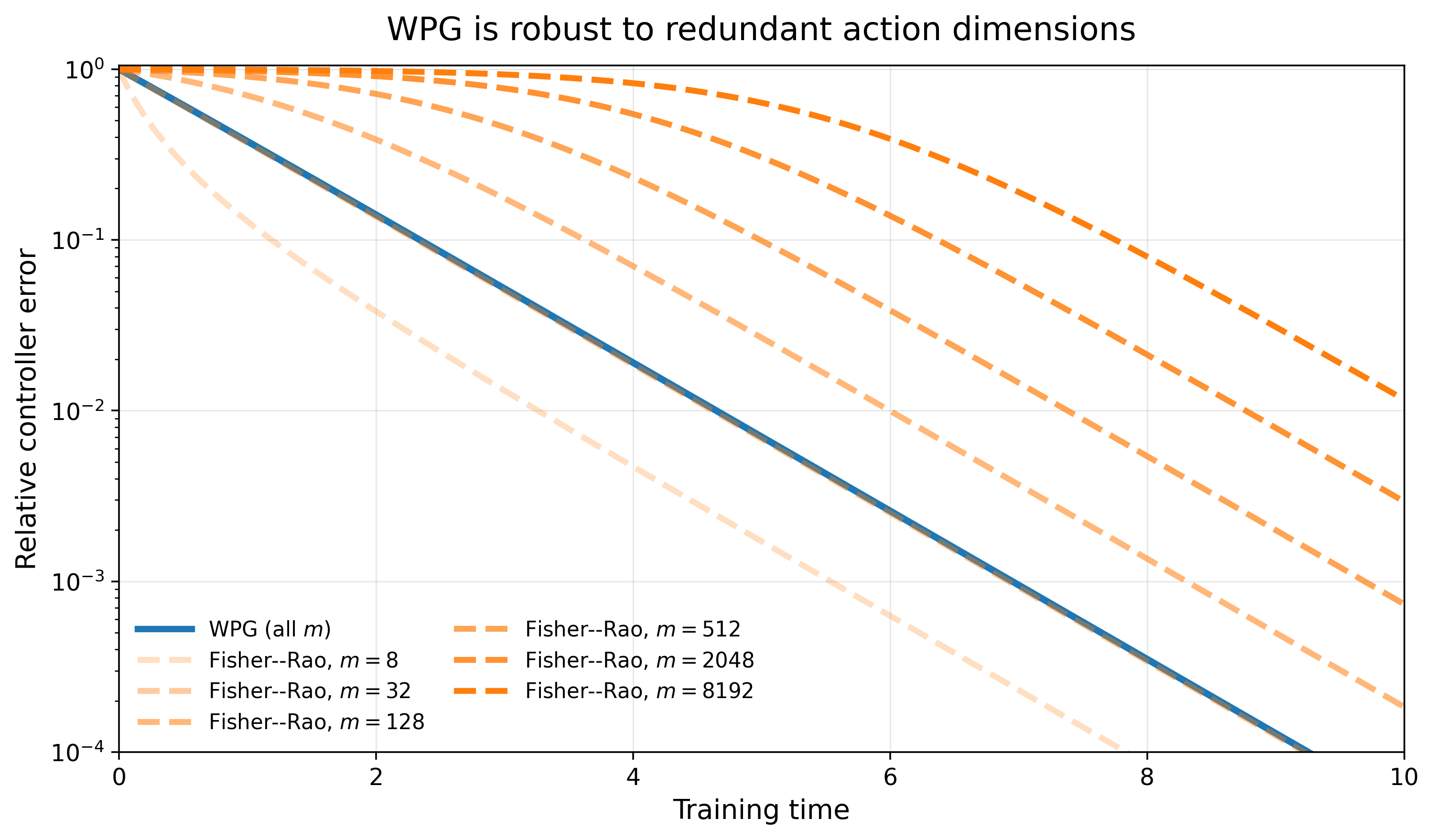}
    \caption{\textbf{Robustness to redundant action dimensions.}
    The plot gives a closed-form local comparison near the optimal policy. An eight-dimensional task-relevant action subspace is embedded in ambient dimensions $m\in\{8,32,128,512,2048,8192\}$. The optimal covariance eigenvalues in this subspace are $10^{-(i-1)/7}$, $i=1,\ldots,8$, and each redundant coordinate has optimal variance $10^{-6}$. Both flows start from the same isotropic covariance with total variance equal to ten times the optimal total variance, and the horizontal axis uses the rescaled time $s=\tau t$. As $m$ grows, the fixed variance budget is spread over more redundant coordinates. The WPG gain equation is independent of the current covariance, so all WPG curves coincide. The Fisher-Rao gain equation is scaled by the covariance, so learning in the relevant directions becomes slower.}
    \label{fig:ambient_dimension}
\end{figure}

\paragraph{Contributions.}
Our contributions are threefold.
\begin{itemize}

\item \emph{Linear-Gaussian optimality and exact policy-space reduction.}
We first consider the unrestricted class of admissible stationary randomized Markov policies and prove, through an exact relative-entropy identity, that an optimal policy is linear-Gaussian. Restricting the optimization problem to admissible linear-Gaussian policies therefore entails no loss of optimality. We then define the discounted-occupancy-weighted statewise \(2\)-Wasserstein metric on conditional action distributions and derive WPGF as the negative gradient flow of the objective under this metric. At every admissible linear-Gaussian policy, the soft action-value function is quadratic in the action, so the WPG vector field is tangent to the linear-Gaussian class and admits the exact parameter representation \eqref{eq:closed_parameter_flow}.
% Thus, the gain-covariance ODE is an exact restriction of the policy-space gradient flow, rather than a projection or approximation.
Using the Lyapunov representation of policy evaluation and compact objective sublevel sets, we further prove that this parameter system has a unique global admissible solution from every finite-cost linear-Gaussian initialization.

\item \emph{Global exponential convergence with explicit temperature dependence.}
We prove that the objective gap satisfies
\[
C(K_t,\Sigma_t)-C(K_\star,\Sigma_\star)
\le
\exp\left(
-\frac{4\lambda_{\min}(\Gamma_0)\lambda_{\min}(R)}{\norm{\Scorr_\star(\tau)}}t
\right)
\bigl(
C(K_0,\Sigma_0)-C(K_\star,\Sigma_\star)
\bigr).
\]
Here \(\Gamma_0\) is the initial-state second-moment matrix, $R$ is the action-cost matrix, and \(\Scorr_\star(\tau)\) is the discounted state-correlation matrix under the optimal policy. 
For each fixed LQ problem,
\[
\Scorr_\star(\tau)
=
\Scorr_\star^{(0)}
+
\tau\Scorr_\star^{(1)},
\]
and hence the convergence exponent approaches a positive limit as
\(\tau\downarrow0\). The LQ-specific analysis therefore avoids the additional perturbative factor \(\exp(-c/\tau)\) that appears in general WPG bounds. The exponent retains the standard dependence on the discounted state correlation, initial-state excitation, action-cost curvature, and problem scaling. The same objective estimate also controls the squared feedback-gain error and the entropy-weighted relative covariance error.

\item \emph{An exact LQ analysis without perturbative or density-ratio losses.}
The proof exploits three identities specific to the LQ setting. The Gibbs law induced by the current soft action-value function is exactly Gaussian. 
% Under the convention
% \[
% \mathcal I(\nu\Vert p)\ge 2\alpha\,\KL(\nu\Vert p),
% \]
Its sharp log-Sobolev coefficient is
\[
\alpha_K
=
\frac{2\lambda_{\min}(M_K)}{\tau}.
\]
% while the convergence proof uses the uniform lower bound
% \[
% \underline{\alpha}
% =
% \frac{2\lambda_R}{\tau},
% \qquad
% M_K\succeq R.
% \]
Combining this bound with the \(\tau^2\)-weighted Fisher-information dissipation and the identity between the Bellman residual and \(\tau\KL\) cancels the explicit temperature factor without invoking a Holley-Stroock perturbation argument. 
% Second, the Bellman resolvent identity converts the statewise Wasserstein dissipation into an accumulated Bellman residual. Third, an exact LQ performance-difference identity controls the objective gap and parameter errors by this residual. 
Consequently, the rate does not require a bounded value-function estimate, a uniform oscillation bound for the soft action-value function, or a ratio bound between policy densities.

\end{itemize}

\paragraph{Organization.}
Section~\ref{sec:related} reviews related work. Section~\ref{sec:lq_setup} formulates the unrestricted entropy-regularized LQ problem and proves optimality of a linear-Gaussian policy. Section~\ref{sec:wpg_lqg_reduction} defines the policy-space metric, derives WPGF, and proves exact closure of its direction on the linear-Gaussian policy class. Section~\ref{sec:geometry} establishes global well-posedness and global exponential convergence of the parameter flow, with explicit discussion of temperature and problem conditioning. The appendices compare policy geometries and give the complete proofs.

\section{Related Work}\label{sec:related}

\paragraph{Policy optimization for LQ control.}
The modern convergence theory of policy optimization for infinite-horizon LQ control begins with \citet{fazel2018lqrpg}, who show that the deterministic feedback-gain objective is nonconvex but satisfies a gradient-dominance property. This structure yields global guarantees for policy gradient, natural policy gradient, and Gauss-Newton methods. Subsequent work develops derivative-free guarantees \citep{malik2019derivativefree}, finite-horizon theory for noisy LQ control \citep{hambly2021lqr}, and model-free convergence and sample-complexity bounds \citep{mohammadi2022lqr}. These results primarily optimize a deterministic feedback gain and do not treat the full conditional action distribution as the policy variable.

For entropy-regularized LQ control, \citet{guo2026fast} study the
same discounted infinite-horizon model as our considered setting, but develop different algorithms
and a different convergence analysis. Their regularized policy-gradient
(RPG) and iterative policy-optimization (IPO) methods are formulated
directly in the linear-Gaussian policy parameters. After aligning sign and
step-size conventions, the RPG gain direction coincides with the WPG gain direction, but the covariance direction is different; the IPO covariance update is an exact one-step policy improvement, while the WPG covariance update is a Wasserstein gradient step. 

\citet{giegrich2024convergence} consider a further distinct setting:
finite-horizon continuous-time exploratory LQ control with
time-dependent linear-Gaussian policies in a noncoervive setting. Their method uses Fisher
geometry for the policy mean and Bures-Wasserstein geometry for the
covariance. The use of Bures-Wasserstein geometry is related to the
covariance component of the Gaussian \(2\)-Wasserstein metric, but their
algorithm combines two separate geometries on the Gaussian parameters;
it is not derived from a single statewise Wasserstein metric on the full
conditional action law.

\paragraph{Wasserstein policy optimization.}
Wasserstein policy methods update policies by transport in action space rather than by a Euclidean parameter change or a KL-based mirror step. Early work formulates policy optimization as a Wasserstein gradient flow \citep{zhang2018wgf}. Related developments include kernelized and computationally efficient Wasserstein natural gradients \citep{arbel2020kwng,moskovitz2020efficientwng}, optimal-transport trust-region methods \citep{terpin2022ottrpo}, metric-aware trust-region analysis \citep{song2024metrictr}, Wasserstein proximal updates \citep{zhu2026wasserstein}, and Wasserstein Policy Optimization \citep{pfau2025wpo}. These methods differ in how the policy-space direction is represented or approximated. Wasserstein Policy Optimization, for example, approximates a distributional policy flow in an actor-critic method and projects the direction onto a finite-dimensional actor. In the LQ model studied here, the linear-Gaussian class contains an unrestricted optimum and the WPG direction is tangent to this class, so the gain-covariance representation is exact and requires no projection.

\paragraph{General WPG convergence.}
The closest convergence result is \citet{zhu2026globalwpg}. They develop a general Bellman-based theory for WPG over continuous action distributions under boundedness and regularity assumptions and prove a discrete-time contraction up to a discretization error. Their continuous-time analysis is formulated for a sufficiently regular density flow, and the moving Gibbs family is controlled by the Holley-Stroock perturbation principle. The present analysis is complementary and specific to LQ control. Quadratic value functions yield an exact finite-dimensional parameter flow and permit a direct global well-posedness proof. The Gibbs law is Gaussian with sharp coefficient without a perturbation factor. The LQ identities also allow unbounded quadratic costs and give convergence statements directly for the feedback gain and the entropy-weighted covariance error. Thus the broader general theory and the sharper LQ analysis address different levels of generality.

\section{Entropy-Regularized Linear-Quadratic Control}
\label{sec:lq_setup}

\subsection{Problem setup}
\label{sec:problem_objective}

We consider the discounted stochastic linear system
\begin{equation}\label{eq:dynamics}
    x_{t+1}=Ax_t+Bu_t+w_t,\qquad t\ge0,
\end{equation}
where \(x_t\in\R^n\), \(u_t\in\R^m\), \(A\in\R^{n\times n}\), and
\(B\in\R^{n\times m}\). The noise sequence \((w_t)_{t\ge0}\) is i.i.d.,
independent of the initial state and of the past, has mean zero, and has
covariance \(W\succeq0\). The initial state \(x_0\sim \cD\) has full-rank second
moment
\[
    \Gamma_0:=\E[x_0x_0^\top]\succ0.
\]
The stage cost is
\begin{equation}\label{eq:stagecost}
    c(x,u)=x^\top Qx+u^\top Ru,
    \qquad
    Q\succeq0,\quad R\succ0.
\end{equation}

We use the cost-minimization convention. For a discount factor
\(\gamma\in(0,1)\), entropy temperature \(\tau>0\), and Markov policy
\(\pi(\cdot\mid x)\) with density, define
\begin{equation}\label{eq:Vpi}
    V^\pi(x)
    :=
    \E^\pi\!\left[
        \sum_{t=0}^{\infty}\gamma^t
        \bigl(c(x_t,u_t)+\tau\log\pi(u_t\mid x_t)\bigr)
        \,\middle|\,x_0=x
    \right],
    \qquad
    C(\pi):=\E_{x_0\sim \cD}[V^\pi(x_0)].
\end{equation}
The \(+\tau\log\pi\) term is a negative-entropy cost and therefore favors
randomized action laws under minimization. For comparisons beyond the Gaussian
class, let \(\Pi_{\mathrm{adm}}\) denote stationary randomized Markov policies
with densities such that \(V^\pi(x)\) is finite and the following discounted
transversality condition holds for every initial state:
\[
    \gamma^T\E_x^\pi[1+\|x_T\|^2]\longrightarrow0,
    \qquad x\in\R^n.
\]
This condition is automatic for the admissible linear-Gaussian policies considered
below. We write
\begin{equation}\label{eq:scalars}
    \mu:=\lambda_{\min}(\Gamma_0)>0,
    \qquad
    \lambda_R:=\lambda_{\min}(R)>0,
\end{equation}
for the two scalar problem constants that enter the convergence rate.
Throughout, \(\norm{\cdot}\) denotes the Euclidean norm for vectors and the
spectral norm for matrices.

We first introduce the linear-Gaussian family and its stability condition. We then prove, by a Bellman verification argument, that this family contains an optimal policy for the unrestricted control problem.

\subsection{Linear-Gaussian policies and admissibility}
\label{sec:lg_admissible}

We parameterize stochastic feedback policies by the linear-Gaussian family
\begin{equation}\label{eq:gaussian_policy}
    \pi_{K,\Sigma}(\cdot\mid x)=\cN(-Kx,\Sigma),
    \qquad
    K\in\R^{m\times n},\quad \Sigma\in\bbS^m_{++},
\end{equation}
where \(K\) parameterizes the state-dependent mean, \(\Sigma\) is the
state-independent action covariance, and \(\bbS^m_{++}\) denotes the cone of symmetric positive definite \(m\times m\) matrices. We write
\(V_{K,\Sigma}:=V^{\pi_{K,\Sigma}}\) and
\(C(K,\Sigma):=C(\pi_{K,\Sigma})\). Equivalently, under \(\pi_{K,\Sigma}\),
\[
    u_t=-Kx_t+\xi_t,
    \qquad
    \xi_t\sim\cN(0,\Sigma),
\]
with \((\xi_t)_{t\ge0}\) independent across time and independent of the system
noise.
The term \emph{linear-Gaussian policy} refers only to the conditional action law in~\eqref{eq:gaussian_policy}: the conditional law is Gaussian, its mean is linear in the state, and its covariance is state independent. Neither the initial state nor the disturbance is assumed Gaussian. Consequently, the state process need not be Gaussian.

For a gain \(K\), define
\[
    F_K:=A-BK,
    \qquad
    L_K:=Q+K^\top RK.
\]
The closed loop becomes
\begin{equation}
    x_{t+1}=F_Kx_t+\eta_t,
    \qquad
    \eta_t:=B\xi_t+w_t,
    \qquad
    \E[\eta_t]=0,
    \qquad
    \Cov(\eta_t)=\Omega_\Sigma:=B\Sigma B^\top+W.
\end{equation}

\begin{definition}[Admissibility]\label{def:admissible}
A pair \((K,\Sigma)\) is called \emph{admissible} if
\begin{equation}\label{eq:admissible}
    \Sigma\succ0,
    \qquad
    \rho(\sqrt{\gamma}F_K)<1.
\end{equation}
We denote the admissible set by \(\cA_{\mathrm{adm}}\), and we write
\[
    \cG_{\mathrm{adm}}
    :=
    \{\pi_{K,\Sigma}:(K,\Sigma)\in\cA_{\mathrm{adm}}\}
\]
for the corresponding class of admissible linear-Gaussian policies.
\end{definition}

For an admissible pair, define the discounted state-correlation matrix
\begin{equation}\label{eq:state-correlation}
    \Scorr_{K,\Sigma}
    :=
    \E^{K,\Sigma}\!\left[
        \sum_{t=0}^{\infty}\gamma^t x_tx_t^\top
    \right].
\end{equation}
Then \(\Scorr_{K,\Sigma}\) is finite and \(\Scorr_{K,\Sigma}\succeq\Gamma_0\succ0\),
because the \(t=0\) term equals \(\Gamma_0\). Throughout the paper, \(\Scorr\)
denotes discounted state-correlation matrices.

\begin{assumption}[Standing assumptions]\label{ass:standard}
Throughout the paper, we assume:
\begin{enumerate}[label=(\roman*)]
\item \((\sqrt{\gamma}A,\sqrt{\gamma}B)\) is stabilizable: there exists \(K\) such that \(\rho(\sqrt{\gamma}(A-BK))<1\).
\item \((Q^{1/2},\sqrt{\gamma}A)\) is detectable: there exists \(L\) such that \(\rho(\sqrt{\gamma}A-LQ^{1/2})<1\).
\item The initial linear-Gaussian policy \((K_0,\Sigma_0)\) has finite cost: \(C(K_0,\Sigma_0)<\infty\).
\end{enumerate}
\end{assumption}

These are the discounted analogues of the standard stabilizability and detectability assumptions for LQ control~\citep{anderson2007optimal}. Detectability rules out unstable closed loops with finite cost caused by modes unobserved by \(Q\); see Lemma~\ref{lem:finite_adm} in Appendix~\ref{app:stability}.

\subsection{Value representation and Bellman notation}
\label{sec:value_representation}

The next lemma gives the value and soft action-value functions in closed form. Its proof is a standard Lyapunov calculation and appears in Appendix~\ref{app:identities}.

\begin{lemma}[Quadratic value representation]\label{lem:value_representation}
For every admissible pair \((K,\Sigma)\), define
\begin{equation}\label{eq:PK_series}
    P_K
    :=
    \sum_{t=0}^{\infty}
    \gamma^t(F_K^t)^\top L_KF_K^t.
\end{equation}
Then \(P_K\) is the unique positive semidefinite solution of the discounted
Lyapunov equation
\begin{equation}\label{eq:PK_lyap}
    P_K=L_K+\gamma F_K^\top P_KF_K.
\end{equation}
Moreover, with
\begin{equation}\label{eq:notation_block}
    M_K:=R+\gamma B^\top P_KB,
    \qquad
    N_K:=\gamma B^\top P_KA,
    \qquad
    E_K:=M_KK-N_K,
\end{equation}
we have \(M_K\succeq R\succ0\), and the entropy-regularized value function is
\begin{equation}\label{eq:Vquad}
    V^{\pi_{K,\Sigma}}(x)=x^\top P_Kx+q_{K,\Sigma},
\end{equation}
where
\begin{equation}\label{eq:qKSigma}
    q_{K,\Sigma}
    =
    \frac{1}{1-\gamma}
    \left(
        \Tr(\Sigma M_K)
        -\frac{\tau}{2}
        \bigl(m+\log((2\pi)^m\det\Sigma)\bigr)
        +\gamma\Tr(WP_K)
    \right).
\end{equation}
The corresponding soft state-action value is
\begin{equation}\label{eq:Qquad}
\begin{aligned}
Q^{\pi_{K,\Sigma}}(x,u)
=
u^\top M_Ku
+2u^\top N_Kx
+\psi_{K,\Sigma}(x),
\end{aligned}
\end{equation}
where
\begin{equation}\label{eq:psiKSigma}
\psi_{K,\Sigma}(x)
=
x^\top\!\left(Q+\gamma A^\top P_KA\right)x
+\gamma\Tr(WP_K)
+\gamma q_{K,\Sigma}.
\end{equation}

Consequently,
\begin{equation}\label{eq:CKS}
    C(K,\Sigma)=\Tr(\Gamma_0P_K)+q_{K,\Sigma}.
\end{equation}
\end{lemma}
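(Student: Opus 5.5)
The plan is a standard Lyapunov computation carried out in four steps: (i) identify $P_K$ and its Lyapunov equation; (ii) verify the guessed value function $x^\top P_Kx+q$ through the one-step Bellman recursion and fix $q$; (iii) read off $Q^{\pi_{K,\Sigma}}$ by a one-step expansion; (iv) average over $x_0$.

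\emph{Step (i): the matrix $P_K$.} Since $\rho(\sqrt\gamma F_K)<1$, there are constants $c$ and $r<1$ with $\|(\sqrt\gamma F_K)^t\|\le c\,r^t$. Hence the series \eqref{eq:PK_series} converges absolutely and is positive semidefinite, because $L_K\succeq0$. Shifting the index by one gives \eqref{eq:PK_lyap}. For uniqueness, the difference $D$ of two solutions satisfies $D=\gamma F_K^\top DF_K$. Iterating gives $D=(\sqrt\gamma F_K)^{t\top}D(\sqrt\gamma F_K)^t\to0$, so $D=0$. The bound $M_K\succeq R$ follows from $P_K\succeq0$.

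\emph{Step (ii): the value function.} Write $f(x)=x^\top P_Kx+q$ with $q$ to be determined, and compute the one-step conditional expectation under $\pi_{K,\Sigma}$. The stage cost gives
\[
\E[c(x,u)\mid x]=x^\top L_Kx+\Tr(R\Sigma).
\]
The entropy term gives $\tau\E[\log\pi]=-\frac\tau2\bigl(m+\log((2\pi)^m\det\Sigma)\bigr)$. Since $x_1=F_Kx+B\xi+w$ with independent zero-mean $\xi$ and $w$, the next-state term is
\[
\E[f(x_1)\mid x]=x^\top F_K^\top P_KF_Kx+\Tr(P_K(B\Sigma B^\top+W))+q.
\]
By \eqref{eq:PK_lyap}, the Bellman identity $f(x)=\E[c+\tau\log\pi\mid x]+\gamma\E[f(x_1)\mid x]$ holds exactly when
\[
(1-\gamma)q=\Tr(\Sigma R)+\gamma\Tr(\Sigma B^\top P_KB)-\tfrac\tau2(\cdots)+\gamma\Tr(WP_K).
\]
This is \eqref{eq:qKSigma}, using $\Tr(\Sigma M_K)=\Tr(\Sigma R)+\gamma\Tr(\Sigma B^\top P_KB)$.

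Iterating the Bellman identity $T$ times gives $f(x)$ as the truncated discounted sum of costs plus the remainder $\gamma^T\E_x[f(x_T)]$. To pass to $T\to\infty$, two things are needed.
\begin{itemize}
\item \emph{The remainder vanishes.} The closed loop gives $\E_x[x_Tx_T^\top]=F_K^Txx^\top F_K^{T\top}+\sum_{s<T}F_K^s\Omega_\Sigma F_K^{s\top}$. Multiplying by $\gamma^T$ yields $O(r^{2T})+O(\gamma^{T-s}r^{2s})$-type terms, whose sum is at most $T\max(\gamma,r^2)^T\to0$. So $\gamma^T\E_x[1+\|x_T\|^2]\to0$, and therefore $\gamma^T\E_x[f(x_T)]\to0$.
\item \emph{The series converges.} The costs are nonnegative apart from the constant entropy term, and the same moment bound gives summability. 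This justifies exchanging limit and expectation, so $V^{\pi_{K,\Sigma}}=f$.
\end{itemize}
I expect this justification of the limit, handled through the moment bound, to be the only point needing care.

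\emph{Step (iii): the soft action-value function.} Use $Q(x,u)=c(x,u)+\gamma\E[V(Ax+Bu+w)]$. Expanding gives
\[
Q(x,u)=u^\top(R+\gamma B^\top P_KB)u+2\gamma u^\top B^\top P_KAx+x^\top(Q+\gamma A^\top P_KA)x+\gamma\Tr(WP_K)+\gamma q,
\]
which is \eqref{eq:Qquad} with $\psi_{K,\Sigma}$ as in \eqref{eq:psiKSigma}.

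\emph{Step (iv): the objective.} Taking the expectation over $x_0\sim\cD$ gives $C(K,\Sigma)=\Tr(\Gamma_0P_K)+q_{K,\Sigma}$, which is \eqref{eq:CKS}.
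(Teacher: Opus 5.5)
Your proposal is correct and follows the paper's proof essentially step for step. Both arguments obtain the Lyapunov equation by shifting the index in the convergent series and get uniqueness by iteration. Both verify the quadratic ansatz through the one-step policy Bellman identity to fix $q_{K,\Sigma}$, pass to the limit using $\gamma^T\E_x[1+\|x_T\|^2]\to0$, and expand directly to obtain $Q^{\pi_{K,\Sigma}}$ and $C(K,\Sigma)$.

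Your explicit bound $T\max(\gamma,r^2)^T$ fills in the transversality and summability estimates that the paper asserts without detail. One small imprecision: only the conditional mean of $\tau\log\pi(u_t\mid x_t)$ is constant, not the term itself. Its absolute value is at most a constant plus a multiple of $\|\xi_t\|^2$, however, so your summability argument goes through unchanged.
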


The matrix \(E_K\) is the feedback component of the Bellman residual: \(E_K=0\) precisely when the policy mean equals the current quadratic Bellman minimizer.

\subsection{Optimality of linear-Gaussian policies}
\begin{proposition}[Optimality over the unrestricted policy class]
\label{prop:optimizer}
Under Assumption~\ref{ass:standard}(i)-(ii), there is a linear-Gaussian policy
\[
\pi_\star(\cdot\mid x)=\cN(-K_\star x,\Sigma_\star)
\]
such that, for every \(\pi\in\Pi_{\mathrm{adm}}\) and every \(x\in\R^n\),
\begin{equation}\label{eq:gaussian_sufficiency}
\begin{aligned}
V^\pi(x)-V^{\pi_\star}(x)
&=
\tau\,\E_x^\pi\!\left[
\sum_{t=0}^{\infty}\gamma^t
\KL\!\left(
\pi(\cdot\mid x_t)
\,\middle\|
\pi_\star(\cdot\mid x_t)
\right)
\right]
\\
&\ge0.
\end{aligned}
\end{equation}
Hence, up to equality of the conditional densities almost everywhere, \(\pi_\star\) is the unique policy in \(\Pi_{\mathrm{adm}}\) that is optimal for every initial state. In particular,
\begin{equation}\label{eq:wlog_gaussian}
\min_{\pi\in\Pi_{\mathrm{adm}}}C(\pi)
=
\min_{(K,\Sigma)\in\cA_{\mathrm{adm}}}C(K,\Sigma)
=
C(K_\star,\Sigma_\star).
\end{equation}
Thus the policy optimization problem can be restricted to admissible linear-Gaussian policies without loss of optimality.

The optimal parameters satisfy
\[
E_{K_\star}=0,
\qquad
K_\star=M_{K_\star}^{-1}N_{K_\star},
\qquad
\Sigma_\star=\frac{\tau}{2}M_{K_\star}^{-1}.
\]
Moreover, \(P_\star:=P_{K_\star}\) is the unique stabilizing solution of the discounted algebraic Riccati equation
\[
P_\star
=
Q+\gamma A^\top P_\star A
-
\gamma^2 A^\top P_\star B
\left(R+\gamma B^\top P_\star B\right)^{-1}
B^\top P_\star A.
\]
\end{proposition}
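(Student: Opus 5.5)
The plan is a verification argument built on the discounted Riccati equation. First invoke standard discounted LQ theory under Assumption~\ref{ass:standard}(i)--(ii), applied to the scaled pair \((\sqrt\gamma A,\sqrt\gamma B)\). It gives a unique stabilizing solution \(P_\star\succeq0\) of the stated Riccati equation. Set
\[
K_\star:=M_\star^{-1}N_\star,\quad M_\star:=R+\gamma B^\top P_\star B,\quad N_\star:=\gamma B^\top P_\star A,\quad \Sigma_\star:=\tfrac{\tau}{2}M_\star^{-1}.
\]
Stabilizing means \(\rho(\sqrt\gamma F_{K_\star})<1\). The Riccati equation rewrites as the Lyapunov equation \(P_\star=L_{K_\star}+\gamma F_{K_\star}^\top P_\star F_{K_\star}\), via completion of squares using \(K_\star^\top M_\star K_\star=N_\star^\top M_\star^{-1}N_\star\). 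By uniqueness in Lemma~\ref{lem:value_representation}, \(P_{K_\star}=P_\star\), hence \(E_{K_\star}=0\) and \((K_\star,\Sigma_\star)\in\cA_{\mathrm{adm}}\).

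Next I would check that \(V_\star:=V^{\pi_\star}\), given by \eqref{eq:Vquad}, solves the soft Bellman equation in Gibbs form. For every \(x\),
\[
V_\star(x)=-\tau\log\int e^{-Q_\star(x,u)/\tau}\,du,\qquad \pi_\star(u\mid x)=\exp\bigl((V_\star(x)-Q_\star(x,u))/\tau\bigr),
\]
where \(Q_\star(x,u)=c(x,u)+\gamma\E[V_\star(Ax+Bu+w)]\) is the quadratic \eqref{eq:Qquad}. Since \(Q_\star\) has Hessian \(2M_\star\) in \(u\) and minimizer \(-M_\star^{-1}N_\star x=-K_\star x\), the normalized Gibbs density is exactly \(\cN(-K_\star x,\tfrac\tau2M_\star^{-1})=\pi_\star\). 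The constant \(q\) matches because \(V_\star\) is already the fixed point of the policy-evaluation Bellman equation for \(\pi_\star\), and \(\pi_\star\) is the Gibbs law of its own \(Q\).

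The core identity is the per-step relation, valid for any density \(\nu\) on \(\R^m\):
\[
\int \bigl(Q_\star(x,u)+\tau\log\nu(u)\bigr)\nu(du)=V_\star(x)+\tau\KL(\nu\Vert\pi_\star(\cdot\mid x)).
\]
This follows from \(\log\nu-\log\pi_\star=\log\nu+(Q_\star-V_\star)/\tau\). For \(\pi\in\Pi_{\mathrm{adm}}\), take \(\nu=\pi(\cdot\mid x_t)\) and telescope \(\gamma^tV_\star(x_t)-\gamma^{t+1}V_\star(x_{t+1})\) along the \(\pi\)-trajectory. This gives
\[
\E^\pi_x\Bigl[\sum_{t<T}\gamma^t\bigl(c+\tau\log\pi\bigr)\Bigr]=V_\star(x)-\gamma^T\E_x^\pi V_\star(x_T)+\tau\E^\pi_x\Bigl[\sum_{t<T}\gamma^t\KL_t\Bigr].
\]
Now let \(T\to\infty\). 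The boundary term vanishes by transversality, since \(|V_\star(y)|\le c(1+\|y\|^2)\). The KL sum converges monotonically because each term is nonnegative. The left side converges to \(V^\pi(x)\), since \(V^\pi\) is finite by admissibility; I expect this to be the delicate step. The cost part is nonnegative, but \(\log\pi\) need not be bounded below, so interchanging limit and expectation has to rely on the definition of \(V^\pi\) as a convergent expectation. This yields \eqref{eq:gaussian_sufficiency}.

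The consequences then follow directly. Nonnegativity gives optimality of \(\pi_\star\). If \(V^\pi=V^{\pi_\star}\) for every initial state, then the \(t=0\) KL term vanishes for all \(x\), so \(\pi(\cdot\mid x)=\pi_\star(\cdot\mid x)\) a.e., which is uniqueness. Integrating over \(x_0\sim\cD\) and using \(\cG_{\mathrm{adm}}\subset\Pi_{\mathrm{adm}}\) gives \eqref{eq:wlog_gaussian}. The main obstacle I expect is justifying the existence and stabilizing property of the Riccati solution under detectability in the discounted scaling, together with the limit interchange above. For the former I would cite \citet{anderson2007optimal} after rescaling. For the latter I would use Lemma~\ref{lem:finite_adm} to ensure the quadratic growth and transversality needed.
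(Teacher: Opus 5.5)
Your proposal is correct and follows the paper's verification argument: the stabilizing discounted Riccati solution, the per-step identity $\int\bigl(Q_\star(x,u)+\tau\log\nu(u)\bigr)\,\nu(\dd u)=V_\star(x)+\tau\KL(\nu\Vert\pi_\star(\cdot\mid x))$, telescoping along the $\pi$-trajectory, and transversality to remove the boundary term. The only difference is organizational. You identify $P_{K_\star}=P_\star$ first and get the Gibbs form of $V^{\pi_\star}$ from Lemma~\ref{lem:value_representation} plus self-consistency, whereas the paper verifies an explicit candidate $\overline V(x)=x^\top P_\star x+q_\star$ and identifies it with $V^{\pi_\star}$ at the end. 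Also, transversality for a general $\pi\in\Pi_{\mathrm{adm}}$ holds by the definition of $\Pi_{\mathrm{adm}}$, so Lemma~\ref{lem:finite_adm} is not needed there.
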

The proof is given in Appendix~\ref{app:identities}.

Proposition~\ref{prop:optimizer} establishes the order of the analysis. Having first solved the unrestricted control problem, we may now solve the finite-dimensional learning problem
\begin{equation}\label{eq:gaussian_parameter_problem}
    \min_{(K,\Sigma)\in\cA_{\mathrm{adm}}} C(K,\Sigma).
\end{equation}
By~\eqref{eq:wlog_gaussian}, this restriction is without loss of optimality for the original control problem. The next section derives the WPG direction on \(\cG_{\mathrm{adm}}\) and shows that it has an exact representation in the gain and covariance. Thus optimality of the policy class and closure of the update are established as separate results.

\begin{remark}[Limit as the entropy temperature tends to zero]
The Riccati equation, and hence \(P_\star\), \(K_\star\), and
\(M_{K_\star}\), is independent of \(\tau\). The temperature parameter affects
only the optimal covariance:
\[
\Sigma_\star
=
\frac{\tau}{2}M_{K_\star}^{-1}.
\]
Consequently, \(\Sigma_\star\to0\) linearly as \(\tau\downarrow0\), and, for
every fixed \(x\),
\[
\pi_\star(\cdot\mid x)
=
\cN(-K_\star x,\Sigma_\star)
\longrightarrow
\delta_{-K_\star x}
\]
in \(W_2\). Thus the entropy-regularized optimal policy converges to the
deterministic discounted LQ controller as $\tau\downarrow0$.
\end{remark}

\section{Closure of the WPG Direction on the Linear-Gaussian Policy Class}
\label{sec:wpg_lqg_reduction}

By Proposition~\ref{prop:optimizer}, it suffices to optimize over \(\cG_{\mathrm{adm}}\). This section defines the policy-space metric, derives WPGF as its gradient-descent flow, and obtains its exact gain-covariance representation on this policy class.

Because the state space is unbounded and the costs have quadratic growth, the soft Bellman operator is naturally defined on
\[
    \Vtwo
    :=
    \left\{
        v:\R^n\to\R:
        \|v\|_{\Vtwo}
        :=
        \sup_{x\in\R^n}
        \frac{|v(x)|}{1+\|x\|^2}<\infty
    \right\}.
\]
For a Markov policy \(\pi\), define its closed-loop transition operator by
\[
    (\Pop_\pi v)(x)
    :=
    \E^\pi[v(x_1)\mid x_0=x],
\]
and write \(\Pop_{K,\Sigma}:=\Pop_{\pi_{K,\Sigma}}\).

For \(v\in\Vtwo\), define
\begin{equation}\label{eq:Qv}
    Q_v(x,u)
    :=
    c(x,u)+\gamma\,\E[v(Ax+Bu+w)].
\end{equation}
For a policy \(\pi\), write \(Q^\pi:=Q_{V^\pi}\) and define its soft
policy-evaluation operator by
\[
    (\cT_\pi v)(x)
    :=
    \int_{\R^m}
    \bigl(
        Q_v(x,u)+\tau\log\pi(u\mid x)
    \bigr)
    \pi(\dd u\mid x).
\]
Whenever the action minimization is finite, the soft Bellman operator is
\begin{equation}\label{eq:Tstar}
    (\cT_*v)(x)
    :=
    \inf_{\varrho(\cdot\mid x)}
    \int_{\R^m}
    \bigl(
        Q_v(x,u)+\tau\log\varrho(u\mid x)
    \bigr)
    \varrho(\dd u\mid x).
\end{equation}

\begin{definition}[Relative entropy, Fisher information, and LSI]
\label{def:kl_fisher_lsi}
Let \(p\) be a probability law on \(\R^m\) with positive smooth density, and let
\(\nu\ll p\). Define
\[
    \KL(\nu\|p)
    :=
    \int_{\R^m}
    \log\frac{\dd\nu}{\dd p}\,\dd\nu,
    \qquad
    \mathcal I(\nu\|p)
    :=
    \int_{\R^m}
    \left\|
        \nabla_u\log\frac{\dd\nu}{\dd p}
    \right\|^2\,\dd\nu.
\]
We use the convention \(\mathcal I(\nu\|p)=+\infty\) if \(\log(\dd\nu/\dd p)\) does not have a weak gradient with respect to \(u\). We say that \(p\) satisfies an \(\alpha\)-log-Sobolev
inequality if
\[
    \mathcal I(\nu\|p)\ge 2\alpha\,\KL(\nu\|p)
    \qquad
    \text{for all probability laws } \nu\ll p.
\]
\end{definition}

\subsection{Policy-space Wasserstein metric and gradient flow}
\label{sec:action_wpgf}

We now state the policy-space geometry used to define WPG. It is the
statewise 2-Wasserstein geometry used in the WPG literature, with the
statewise metric weighted by the current discounted state-occupancy measure
\citep{zhang2018wgf,pfau2025wpo,zhu2026globalwpg}. For a policy \(\pi\), let
\[
d_{\mathcal D}^{\pi}(A)
:=(1-\gamma)\sum_{t=0}^{\infty}
\gamma^t\,
\mathbb P_{\mathcal D}^{\pi}(x_t\in A),
\qquad
A\in\mathcal B(\R^n),
\]
be its normalized discounted state-occupancy measure.

\begin{definition}[Discounted-occupancy-weighted statewise Wasserstein metric]
\label{def:policy_wasserstein_metric}
Fix a probability measure \(\nu\) on the state space. For two policies whose
conditional action laws belong to \(\mathcal P_2(\R^m)\) and for which the
integral below is finite, define
\begin{equation}
\label{eq:policy_wasserstein_distance}
\mathsf W_{2,\nu}^2(\pi,\widetilde\pi)
:=
\frac{1}{1-\gamma}
\int_{\R^n}
W_2^2\!\left(
\pi(\cdot\mid x),
\widetilde\pi(\cdot\mid x)
\right)
\nu(\dd x).
\end{equation}
Policies that agree for \(\nu\)-almost every state are identified. At a base
policy \(\pi\) with positive smooth conditional densities, WPG uses
\(\nu=d_{\mathcal D}^{\pi}\).

To describe the corresponding tangent metric, begin with a test potential
\(\varphi\in C_c^\infty(\R^n\times\R^m)\), set
\(v=\nabla_u\varphi\), and define the mass-preserving tangent perturbation
\begin{equation}
\label{eq:policy_tangent_representation}
\xi_v(u\mid x)
:=
-\nabla_u\cdot\bigl(\pi(u\mid x)v(x,u)\bigr).
\end{equation}
The tangent velocity space is the closure of these action-gradient fields in
the norm below. Equivalently, each tangent perturbation is represented by its
minimum-\(L^2(\pi(\cdot\mid x))\)-norm action-gradient velocity. For tangent
perturbations \(\xi\) and \(\zeta\), with gradient representatives
\(v_\xi\) and \(v_\zeta\), define
\begin{equation}
\label{eq:policy_wasserstein_inner_product}
\begin{aligned}
\langle \xi,\zeta\rangle_{\pi,\mathsf W}
&:=
\frac{1}{1-\gamma}
\int_{\R^n}d_{\mathcal D}^{\pi}(\dd x)
\int_{\R^m}
\langle v_\xi(x,u),v_\zeta(x,u)\rangle
\pi(\dd u\mid x),\\
\|\xi\|_{\pi,\mathsf W}^2
&:=
\langle \xi,\xi\rangle_{\pi,\mathsf W}.
\end{aligned}
\end{equation}
\end{definition}

For fixed \(\nu\), \(\mathsf W_{2,\nu}\) is the direct integral of the
statewise 2-Wasserstein distances. WPG uses its tangent metric with the state
weight evaluated at the base policy. The factor \((1-\gamma)^{-1}\) converts
the normalized occupancy measure into the usual discounted sum. If
\(d_{\mathcal D}^{\pi}\) does not have full support, the tangent norm is
understood after identifying perturbations that agree
\(d_{\mathcal D}^{\pi}\)-almost everywhere in the state variable.

We next compute the first variation at an admissible linear-Gaussian policy.
Fix \((K,\Sigma)\in\cA_{\mathrm{adm}}\), write
\(\pi=\pi_{K,\Sigma}\), take
\(\varphi\in C_c^\infty(\R^n\times\R^m)\), and set
\[
    v=\nabla_u\varphi,
    \qquad
    T_{\varepsilon,x}(u)=u+\varepsilon v(x,u),
    \qquad
    \pi^\varepsilon(\cdot\mid x)
    =(T_{\varepsilon,x})_\#\pi(\cdot\mid x).
\]
Lemma~\ref{lem:conditional_first_variation} proves directly that this
transport path is well defined for all sufficiently small
\(\lvert\varepsilon\rvert\), that \(C(\pi^\varepsilon)\) is differentiable at
zero, and that
\begin{equation}
\label{eq:first_variation_main}
\begin{aligned}
DC(\pi)[\xi_v]
&=
\frac{1}{1-\gamma}
\int_{\R^n}d_{\mathcal D}^{\pi}(\dd x)
\int_{\R^m}
\bigl(Q^\pi(x,u)+\tau\log\pi(u\mid x)\bigr)
\xi_v(u\mid x)\,\dd u.
\end{aligned}
\end{equation}
Subtracting the conditional mean of the expression in parentheses gives the
usual centered first-variation representative and does not change the value
because \(\int\xi_v(u\mid x)\,\dd u=0\). Integration by parts gives
\begin{equation}
\label{eq:first_variation_transport_main}
\begin{aligned}
DC(\pi)[\xi_v]
&=
\frac{1}{1-\gamma}
\int_{\R^n}d_{\mathcal D}^{\pi}(\dd x)
\int_{\R^m}
\left\langle
\nabla_u\bigl(Q^\pi(x,u)+\tau\log\pi(u\mid x)\bigr),
 v(x,u)
\right\rangle
\pi(\dd u\mid x).
\end{aligned}
\end{equation}
The same lemma verifies the domination, differentiation of the discounted
series, Fubini interchange, and boundary terms used in these formulas.
Importantly, the perturbing policies \(\pi^\varepsilon\) need not be Gaussian;
thus the calculation identifies the policy-space gradient at a
linear-Gaussian policy before the closure result below is invoked.

\begin{proposition}[WPGF as a policy-space gradient flow]
\label{prop:wpg_metric_gradient}
At every admissible linear-Gaussian policy \(\pi=\pi_{K,\Sigma}\), the gradient
of \(C\) under the metric in
Definition~\ref{def:policy_wasserstein_metric} is
\begin{equation}
\label{eq:policy_space_gradient}
\operatorname{grad}_{\mathsf W}C(\pi)(u\mid x)
=
-\nabla_u\cdot
\left[
\pi(u\mid x)
\nabla_u\bigl(Q^\pi(x,u)+\tau\log\pi(u\mid x)\bigr)
\right].
\end{equation}
Consequently, the gradient-descent flow
\(\partial_t\pi_t=-\operatorname{grad}_{\mathsf W}C(\pi_t)\) is
\begin{equation}
\label{eq:wpgf}
\tag{WPGF}
\partial_t\pi_t(u\mid x)
=
\nabla_u\cdot
\left[
\pi_t(u\mid x)
\nabla_u\bigl(Q^{\pi_t}(x,u)+\tau\log\pi_t(u\mid x)\bigr)
\right].
\end{equation}
\end{proposition}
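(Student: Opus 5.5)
The plan is to identify the gradient as the Riesz representative of the first variation $DC(\pi)$ with respect to the inner product $\langle\cdot,\cdot\rangle_{\pi,\mathsf W}$ of Definition~\ref{def:policy_wasserstein_metric}. That is, $\operatorname{grad}_{\mathsf W}C(\pi)$ is the unique tangent perturbation $\zeta$ such that
\[
\langle \zeta,\xi_v\rangle_{\pi,\mathsf W}=DC(\pi)[\xi_v]
\qquad\text{for every } \varphi\in C_c^\infty(\R^n\times\R^m),\ v=\nabla_u\varphi .
\]
Density of such $\xi_v$ in the tangent space then extends the identity to all tangent directions. The starting point is the transport form~\eqref{eq:first_variation_transport_main} of the first variation, supplied by Lemma~\ref{lem:conditional_first_variation}. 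I take that lemma as given, including the differentiability, domination and boundary-term verifications.

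\textbf{Step 1: the candidate velocity field.} At $\pi=\pi_{K,\Sigma}$, define
\[
g(x,u):=\nabla_u\bigl(Q^\pi(x,u)+\tau\log\pi(u\mid x)\bigr).
\]
By~\eqref{eq:Qquad} and the Gaussian form of $\pi$, this equals
\[
g(x,u)=2M_Ku+2N_Kx-\tau\Sigma^{-1}(u+Kx),
\]
which is affine in $(x,u)$. In particular $g$ is itself an action gradient, namely $g=\nabla_u\phi$ with $\phi$ quadratic in $(x,u)$. The candidate gradient is
\[
\zeta:=-\nabla_u\cdot(\pi g),
\]
which is exactly the right-hand side of~\eqref{eq:policy_space_gradient}.

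\textbf{Step 2: $\zeta$ is a genuine tangent vector.} I need to show that $\zeta$ lies in the closure of the perturbations $\xi_v$, $v=\nabla_u\varphi$ with $\varphi\in C_c^\infty$, in the norm~\eqref{eq:policy_wasserstein_inner_product}. I also need that its minimum-norm representative is $g$ itself. Both points use the same ingredient: admissibility gives $\Scorr_{K,\Sigma}<\infty$, and since $d_{\mathcal D}^\pi$ is the normalized occupancy measure this means $\|x\|^2$ is $d_{\mathcal D}^\pi$-integrable. The Gaussian action law then has all moments, so $g\in L^2(d_{\mathcal D}^\pi\otimes\pi)$.

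For membership in the closure, I truncate: take $\varphi_R=\chi_R\phi$ with smooth cutoffs $\chi_R$. Then $\nabla_u\varphi_R\to g$ in $L^2(d_{\mathcal D}^\pi\otimes\pi)$ by dominated convergence, because $|\nabla\chi_R|\,|\phi|$ is bounded by a constant times $(1+\|x\|^2+\|u\|^2)/R$ on the annulus where $\nabla\chi_R\neq0$, which is square-integrable.

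For the minimum-norm claim, note that $g$ is a limit of action gradients of test functions. Hence it is orthogonal to every divergence-free correction $w$, meaning one with $\nabla_u\cdot(\pi w)=0$. So $g$ is the minimum-norm representative of $\zeta$.

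\textbf{Step 3: the Riesz identity.} With $g$ as the representative of $\zeta$, equation~\eqref{eq:first_variation_transport_main} reads
\[
DC(\pi)[\xi_v]=\frac{1}{1-\gamma}\int d_{\mathcal D}^\pi(\dd x)\int\langle g,v\rangle\,\pi(\dd u\mid x)=\langle\zeta,\xi_v\rangle_{\pi,\mathsf W}.
\]
I then extend this to the whole tangent space by continuity. Cauchy--Schwarz shows that $DC(\pi)$ is bounded in the $\|\cdot\|_{\pi,\mathsf W}$ norm, with bound $\|g\|_{L^2(d_{\mathcal D}^\pi\otimes\pi)}$, so the identity passes to limits.

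Uniqueness follows because two representatives of the same functional differ by a vector orthogonal to a dense set of directions; such a vector is zero in the quotient, where states are identified $d_{\mathcal D}^\pi$-a.e. as in the definition. Finally, $-\operatorname{grad}_{\mathsf W}C(\pi_t)=\nabla_u\cdot(\pi_t g_t)$, which gives~\eqref{eq:wpgf} immediately.

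\textbf{Main obstacle.} The delicate step is Step 2: the closure and minimal-representative claims, given the unbounded state space and a weight $d_{\mathcal D}^\pi$ that may lack full support. The key fact making it work is the finiteness of the discounted second moment, which comes from $\rho(\sqrt\gamma F_K)<1$ together with the quadratic growth of $g$ and $\phi$.
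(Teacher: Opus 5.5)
Your proposal is correct and is essentially the paper's proof. You identify the affine field $g$, place it in the completed tangent space by cutoff approximation, read off the Riesz identity from the transport form of the first variation, and extend by density; your joint cutoff $\chi_R$ in $(x,u)$ replaces the paper's product cutoff $\chi_R(x)\eta_L(u)$ with iterated limits, and your minimum-norm and uniqueness remarks make explicit what the paper leaves implicit. One point to state precisely is that on the annulus where $\nabla\chi_R\neq0$ one has $(1+\|x\|^2+\|u\|^2)/R\lesssim 1+\|x\|+\|u\|$, so the dominating function is square-integrable using only the finite second moments available here, not fourth moments of $x$ under $d_{\mathcal D}^{\pi}$.
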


\begin{proof}
Fix \(\pi=\pi_{K,\Sigma}\) and set
\[
    g_\pi(x,u)
    :=
    \nabla_u\bigl(Q^\pi(x,u)+\tau\log\pi(u\mid x)\bigr).
\]
The function inside the action gradient is quadratic in \((x,u)\), so
\(g_\pi\) is affine. Admissibility gives a finite discounted second state
moment, and the conditional action law has finite second moments. Hence
\[
\frac{1}{1-\gamma}
\int d_{\mathcal D}^{\pi}(\dd x)
\int \|g_\pi(x,u)\|^2\pi(\dd u\mid x)
<\infty.
\]
Moreover, \(g_\pi\) belongs to the completed tangent velocity space in
Definition~\ref{def:policy_wasserstein_metric}. It is the action gradient of a
quadratic function, and smooth cutoffs in the state and action variables
approximate it in the metric norm; the details are given at the end of the
proof of Lemma~\ref{lem:conditional_first_variation}.

For every test velocity \(v=\nabla_u\varphi\),
Equation~\eqref{eq:first_variation_transport_main} gives
\[
DC(\pi)[\xi_v]
=
\left\langle
-\nabla_u\cdot(\pi g_\pi),
\xi_v
\right\rangle_{\pi,\mathsf W}.
\]
The test velocities are dense by definition, so the identity extends by
continuity to the tangent space. This proves
\eqref{eq:policy_space_gradient}; changing the sign gives~\eqref{eq:wpgf}.
The same discounted occupancy measure and normalization appear in the first
variation and in the metric, so no additional state-dependent factor enters
the WPG velocity.
\end{proof}

The metric determines the gradient for
\(d_{\mathcal D}^{\pi}(\dd x)\pi(\dd u\mid x)\)-almost every \((x,u)\). As in
the statewise WPG convention, we use the displayed Bellman field as the
representative at every state where it is defined. This gives the full-state
policy update used in the closure calculation below.

For each state \(x\), define the Gibbs density associated with the current
soft action-value function by
\begin{equation}
\label{eq:gibbs_general}
p_t(u\mid x)
:=
\frac{
\exp\bigl(-Q^{\pi_t}(x,u)/\tau\bigr)
}{
\displaystyle
\int_{\R^m}
\exp\bigl(-Q^{\pi_t}(x,a)/\tau\bigr)\,\dd a
}.
\end{equation}
Since the normalizing constant in~\eqref{eq:gibbs_general} is independent of
\(u\),
\[
\nabla_u
\bigl(
Q^{\pi_t}(x,u)+\tau\log\pi_t(u\mid x)
\bigr)
=
\tau
\nabla_u
\log
\frac{\pi_t(u\mid x)}{p_t(u\mid x)}.
\]
Consequently, WPGF has the equivalent Fokker-Planck representation
\begin{equation}
\label{eq:wpgf_re_form}
\partial_t\pi_t(u\mid x)
=
\tau\nabla_u\cdot
\left(
\pi_t(u\mid x)
\nabla_u
\log\frac{\pi_t(u\mid x)}{p_t(u\mid x)}
\right).
\end{equation}
The Gibbs law \(p_t(\cdot\mid x)\) is not fixed: it is determined by \(Q^{\pi_t}\), which depends on the current policy through the Bellman equation. Thus, at each time, \eqref{eq:wpgf_re_form} is the relative-entropy gradient flow toward the current Gibbs law, while that Gibbs law changes with the policy.

\subsection{Closure on the linear-Gaussian policy class}
\label{sec:gaussian_reduction}

We now evaluate the statewise WPG vector field on \(\cG_{\mathrm{adm}}\). Lemma~\ref{lem:value_representation} gives
\begin{equation}\label{eq:Qquad_reduction}
    Q^{\pi_{K,\Sigma}}(x,u)
    =
    u^\top M_Ku+2u^\top N_Kx+\psi_{K,\Sigma}(x),
\end{equation}
where \(\psi_{K,\Sigma}\) is independent of \(u\). Hence
\[
    \nabla_u Q^{\pi_{K,\Sigma}}(x,u)
    =
    2M_Ku+2N_Kx,
\]
which is affine in \(u\). The following proposition shows that the WPG vector field is tangent to the linear-Gaussian policy class and identifies its exact parameter representation.

\begin{proposition}[Exact closure on linear-Gaussian policies]
\label{prop:gaussian_reduction}
For every \((K,\Sigma)\in\cA_{\mathrm{adm}}\), the right-hand side of the statewise WPG equation evaluated at \(\pi_{K,\Sigma}\) is tangent to \(\cG_{\mathrm{adm}}\). More precisely, let \(D\pi_{K,\Sigma}[\dot K,\dot\Sigma]\) denote the directional derivative of the Gaussian density with respect to its parameters. The unique parameter velocity \((\dot K,\dot\Sigma)\) satisfying
\begin{equation}\label{eq:tangent_identity}
\begin{aligned}
&D\pi_{K,\Sigma}[\dot K,\dot\Sigma](u\mid x)\\
&\qquad=
\nabla_u\cdot\!\left[
\pi_{K,\Sigma}(u\mid x)
\nabla_u\!\left(
Q^{\pi_{K,\Sigma}}(x,u)
+
\tau\log\pi_{K,\Sigma}(u\mid x)
\right)
\right]
\end{aligned}
\end{equation}
for every \(x\) and \(u\) is
\begin{equation}\label{eq:KSigma_vector_field}
    \dot K=-2E_K,
    \qquad
    \dot\Sigma=-2M_K\Sigma-2\Sigma M_K+2\tau I.
\end{equation}
Consequently, on every interval on which \((K_t,\Sigma_t)\) is an admissible solution of
\begin{equation}\label{eq:KSigma_ode}
\begin{aligned}
    \dot K_t&=2(N_t-M_tK_t)=-2E_t,\\
    \dot\Sigma_t&=-2M_t\Sigma_t-2\Sigma_tM_t+2\tau I,
\end{aligned}
\end{equation}
the curve \(\pi_t=\pi_{K_t,\Sigma_t}\) is a linear-Gaussian solution of the statewise WPG equation~\eqref{eq:wpgf}.
\end{proposition}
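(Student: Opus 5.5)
The plan is a direct computation: evaluate the right-hand side of the statewise WPG equation at $\pi=\pi_{K,\Sigma}$, compare it with the general form of the parameter derivative of a Gaussian density, and read off $(\dot K,\dot\Sigma)$.

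\textbf{The WPG field at a Gaussian.} Write $\mu(x)=-Kx$ and $\pi(u\mid x)=\cN(u;\mu(x),\Sigma)$. Then $\nabla_u\log\pi=-\Sigma^{-1}(u-\mu)$. By Lemma~\ref{lem:value_representation} and \eqref{eq:Qquad_reduction}, $\nabla_uQ^{\pi}=2M_Ku+2N_Kx$. Writing $u=\mu+(u-\mu)$, the velocity field is
\[
g(x,u)=\bigl(2M_K-\tau\Sigma^{-1}\bigr)(u-\mu)+2(M_K\mu+N_Kx)=G(u-\mu)+b(x),
\]
with $G:=2M_K-\tau\Sigma^{-1}$ and $b(x):=-2(M_KK-N_K)x=-2E_Kx$. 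This field is affine in $u$.

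\textbf{The divergence of $\pi$ times an affine field.} For a Gaussian density and $g=G(u-\mu)+b$, with $z:=u-\mu$,
\[
\nabla_u\cdot(\pi g)=\pi\Bigl(\Tr G-z^\top\Sigma^{-1}Gz-z^\top\Sigma^{-1}b\Bigr).
\]
The parameter derivative of the density in a direction $(\dot\mu,\dot\Sigma)$ is
\[
\pi\Bigl(z^\top\Sigma^{-1}\dot\mu+\tfrac12 z^\top\Sigma^{-1}\dot\Sigma\Sigma^{-1}z-\tfrac12\Tr(\Sigma^{-1}\dot\Sigma)\Bigr),
\]
and $\dot\mu=-\dot Kx$.

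\textbf{Matching coefficients.} Divide by $\pi$ and match the polynomial in $z$, for each $x$.
\begin{itemize}
\item Linear term: $\Sigma^{-1}\dot\mu=-\Sigma^{-1}b$, so $\dot\mu=-b(x)=2E_Kx$. Hence $\dot K=-2E_K$.
\item Quadratic term: we need $\tfrac12\Sigma^{-1}\dot\Sigma\Sigma^{-1}=-\operatorname{sym}(\Sigma^{-1}G)$, that is, $\dot\Sigma=-(G\Sigma+\Sigma G^\top)$. Substituting $G$ gives $-2M_K\Sigma-2\Sigma M_K+2\tau I$.
\item Constant term: $-\tfrac12\Tr(\Sigma^{-1}\dot\Sigma)=\Tr G$ holds automatically, since $\Tr(\Sigma^{-1}(G\Sigma+\Sigma G^\top))=2\Tr G$.
\end{itemize}

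\textbf{Uniqueness.} The map $(\dot K,\dot\Sigma)\mapsto D\pi_{K,\Sigma}[\dot K,\dot\Sigma]$ is injective for symmetric $\dot\Sigma$. If the derivative vanishes for all $x,u$, the linear coefficient gives $\dot Kx=0$ for every $x$, so $\dot K=0$. The quadratic form then gives $\Sigma^{-1}\dot\Sigma\Sigma^{-1}=0$, so $\dot\Sigma=0$.

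\textbf{The curve statement.} Along an admissible $C^1$ solution of \eqref{eq:KSigma_ode}, the map $t\mapsto\pi_{K_t,\Sigma_t}(u\mid x)$ is differentiable. By the chain rule, $\partial_t\pi_t=D\pi_{K_t,\Sigma_t}[\dot K_t,\dot\Sigma_t]$. By \eqref{eq:tangent_identity} this equals the WPGF right-hand side at $\pi_t$, with $Q^{\pi_t}$ evaluated at the current policy.

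\textbf{Main obstacle.} The step needing the most care is bookkeeping rather than a genuine difficulty. One must check that $\nabla_u\psi_{K,\Sigma}=0$, so the state-dependent parts cancel correctly in $b(x)$ and $E_K$ appears exactly. One must also check that $G$ need not be symmetric in general. Here it is symmetric, because $M_K$ and $\Sigma^{-1}$ are, so the symmetrization is harmless. Beyond this, the computation of $\nabla_u\cdot(\pi g)$ and of the parameter derivative is a routine Gaussian calculation.
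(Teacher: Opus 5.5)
Your proposal is correct and follows the paper's proof essentially step for step. Both use the same centered variable $z=u+Kx$, the same affine field $(2M_K-\tau\Sigma^{-1})z-2E_Kx$, the same divergence and Gaussian-tangent formulas, and the same term-by-term matching; the only differences are cosmetic: you derive $(\dot K,\dot\Sigma)$ by matching coefficients instead of verifying the stated choice, and you argue uniqueness through injectivity of the polynomial coefficients rather than through the first two conditional moments.
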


\begin{remark}[Meaning of the closure result]
Proposition~\ref{prop:gaussian_reduction} identifies the statewise WPG vector field exactly on the policy class that is sufficient for optimality. The parameter ODE is therefore the WPG dynamics on \(\cG_{\mathrm{adm}}\), rather than a projection or approximation of the policy-space direction.
\end{remark}

The proof is given in Appendix~\ref{app:flow}. Proposition~\ref{prop:gaussian_reduction} defines the closed linear-Gaussian WPG parameter flow analyzed below.

Here all Bellman quantities are evaluated at the current gain \(K_t\). Namely,
with
\[
    P_t:=P_{K_t},
    \qquad
    M_t:=R+\gamma B^\top P_tB,
    \qquad
    N_t:=\gamma B^\top P_tA,
    \qquad
    E_t:=M_tK_t-N_t,
\]
the matrix \(P_t\) is the unique positive semidefinite solution of
\[
    P_t
    =
    Q+K_t^\top RK_t
    +
    \gamma(A-BK_t)^\top P_t(A-BK_t).
\]

Consequently, the Gibbs law in~\eqref{eq:gibbs_general} is also Gaussian:
\begin{equation}\label{eq:gibbs_policy_lq}
p_t(\cdot\mid x)
=
\cN(-M_t^{-1}N_tx,\Xi_t),
\qquad
\Xi_t:=\frac{\tau}{2}M_t^{-1}.
\end{equation}

The gain equation follows the feedback residual \(E_t=M_tK_t-N_t\). The covariance equation combines contraction generated by the quadratic action-cost matrix \(M_t\) with the additive entropy term \(2\tau I\).

\begin{remark}[Relation to other policy geometries]
For comparison, Table~\ref{tab:geometry_comparison} in
Appendix~\ref{app:geometry_comparison} summarizes the joint gain and
covariance updates induced by WPGF and other policy geometries. 
\end{remark}

\section{Global Convergence Analysis}
\label{sec:geometry}

We now analyze the linear-Gaussian parameter flow. A Bellman resolvent identity converts the relative Fisher-information dissipation at each state into discounted value decrease. The Gaussian Bellman residual and the LQ performance-difference identity then compare this decrease with the global objective gap. All estimates are expressed through the matrices \(P_K\), \(M_K\), \(E_K\), and \(\Scorr_{K,\Sigma}\); no bounded value-function or soft-\(Q\) estimate is used.

\subsection{Value descent through the Bellman resolvent}
\label{sec:value_descent}

Because the parameter ODE is the exact closure of the statewise Wasserstein policy gradient on \(\cG_{\mathrm{adm}}\), its descent is most naturally expressed at the value-function level. Let
\[
    V_t:=V^{\pi_t},
    \qquad
    \Pop_t:=\Pop_{\pi_t}=\Pop_{K_t,\Sigma_t}.
\]
Define
\begin{equation}\label{eq:gt_definition}
    g_t(x)
    :=
    \tau^2
    \mathcal I
    \bigl(
        \pi_t(\cdot\mid x)
        \|p_t(\cdot\mid x)
    \bigr),
\end{equation}
where \(p_t\) is the current Gibbs policy in \eqref{eq:gibbs_policy_lq}. Then
\(g_t(x)\ge0\), and the following identity converts local action-space
dissipation into value decrease.

\begin{lemma}[Value resolvent identity]
\label{lem:resolvent}
Along the linear-Gaussian parameter flow in Proposition~\ref{prop:gaussian_reduction},
\begin{equation}\label{eq:resolvent_identity}
    (I-\gamma\Pop_t)\frac{\dd}{\dd t}V_t
    =
    -g_t.
\end{equation}
Consequently,
\begin{equation}\label{eq:value_monotonicity}
    \frac{\dd}{\dd t}V_t(x)
    =
    -
    \sum_{i=0}^{\infty}
    \gamma^i(\Pop_t^i g_t)(x)
    \le
    -g_t(x)
    \le0,
    \qquad
    \forall x\in\R^n.
\end{equation}
\end{lemma}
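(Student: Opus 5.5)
Differentiate the policy-evaluation fixed point in time. For each $t$, $V_t$ satisfies the Bellman equation $V_t=\cT_{\pi_t}V_t$, i.e.
\[
V_t(x)=\int\bigl(c(x,u)+\tau\log\pi_t(u\mid x)\bigr)\pi_t(\dd u\mid x)+\gamma(\Pop_tV_t)(x).
\]
By Lemma~\ref{lem:value_representation}, $V_t(x)=x^\top P_{K_t}x+q_{K_t,\Sigma_t}$, and $t\mapsto (K_t,\Sigma_t)$ is $C^1$ on the admissible open set. Since $P_K$ solves a Lyapunov equation with $\rho(\sqrt\gamma F_K)<1$, the map $K\mapsto P_K$ is smooth, so $\frac{\dd}{\dd t}V_t$ is a quadratic in $x$ with continuous coefficients and lies in $\Vtwo$.

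\textbf{Product rule.} Differentiating the Bellman equation with the product rule gives
\[
\frac{\dd}{\dd t}V_t=\int\bigl(Q_{V_t}(x,u)+\tau\log\pi_t\bigr)\partial_t\pi_t\,\dd u+\gamma\,\Pop_t\frac{\dd}{\dd t}V_t .
\]
The term $\int\tau(\partial_t\log\pi_t)\pi_t\,\dd u$ vanishes because $\int\partial_t\pi_t=0$. The term $\gamma\Pop_t\dot V_t$ comes from differentiating $Q_{V_t}$ inside the integral; the $V$-dependence of $Q_{V_t}$ is exactly $\gamma\E[V_t(Ax+Bu+w)]$. The differentiation of the policy inside the integral produces the first term. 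Rearranging,
\[
(I-\gamma\Pop_t)\frac{\dd}{\dd t}V_t=\int\bigl(Q^{\pi_t}+\tau\log\pi_t\bigr)\partial_t\pi_t\,\dd u .
\]
Each interchange of derivative and integral is legitimate: all integrands are Gaussian integrals of polynomials with $C^1$ parameter dependence, so they can also be evaluated in closed form.

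\textbf{Closure and integration by parts.} By Proposition~\ref{prop:gaussian_reduction}, $\partial_t\pi_t=\nabla_u\cdot(\pi_t\nabla_u(Q^{\pi_t}+\tau\log\pi_t))$. Integrating by parts in $u$, with boundary terms vanishing by Gaussian decay against polynomial growth, gives
\[
\int\bigl(Q^{\pi_t}+\tau\log\pi_t\bigr)\partial_t\pi_t\,\dd u
=-\int\|\nabla_u(Q^{\pi_t}+\tau\log\pi_t)\|^2\pi_t\,\dd u .
\]
Using $\nabla_u(Q^{\pi_t}+\tau\log\pi_t)=\tau\nabla_u\log(\pi_t/p_t)$ from \eqref{eq:gibbs_general}, the right-hand side is $-\tau^2\mathcal I(\pi_t\|p_t)=-g_t$. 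This yields \eqref{eq:resolvent_identity}.

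\textbf{Inversion.} To obtain \eqref{eq:value_monotonicity}, invert $I-\gamma\Pop_t$ on $\Vtwo$ by the Neumann series. Admissibility gives $\sup_i\gamma^i\|\Pop_t^i\|_{\Vtwo\to\Vtwo}$ decaying geometrically; this is the discounted transversality condition, since $\Pop_t^i(1+\|x\|^2)$ is controlled by $\|(\sqrt\gamma F_K)^i\|^2$ plus bounded noise terms. Hence the series $\sum_i\gamma^i\Pop_t^ig_t$ converges in $\Vtwo$. Note $g_t$ is itself a quadratic in $x$: the KL and Fisher information between two Gaussians with the same covariance structure depend on the squared mean gap $\|(M_t^{-1}N_t-K_t)x\|^2$ plus a constant. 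Uniqueness of the $\Vtwo$ solution of $(I-\gamma\Pop_t)h=-g_t$ then identifies $\dot V_t$ with $-\sum_i\gamma^i\Pop_t^ig_t$. Positivity of $\Pop_t$ and $g_t\ge0$ give the inequalities, by keeping only the $i=0$ term.

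\textbf{Main obstacle.} The main obstacle is rigor in the first step: justifying differentiation of $\Pop_tV_t$ in $t$ when both the operator and the function move. I would sidestep this by working entirely with the closed forms. Compute $\dot P_t$ from the differentiated Lyapunov equation and $\dot q_t$ from \eqref{eq:qKSigma}, and verify \eqref{eq:resolvent_identity} as a matrix identity in $x$. Then use the abstract computation above only as a guide to which terms cancel.
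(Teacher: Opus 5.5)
Your proposal is correct and follows essentially the same route as the paper. You differentiate the policy-evaluation Bellman identity so that the variation of $Q^{\pi_t}$ produces $\gamma\Pop_t\dot V_t$, and the variation of the policy, after you substitute the WPGF closure and integrate by parts, produces $-\tau^2\mathcal I(\pi_t\|p_t)=-g_t$; you then invert $I-\gamma\Pop_t$ on $\Vtwo$ with the positive Neumann series of Lemma~\ref{lem:v2_resolvent}. The paper organizes the first step through the functional $\mathcal J_t(\nu;x)$ and the relative-entropy form of WPGF rather than your product rule, but the computation is the same, and your closed-form Gaussian justification of the derivative–integral interchanges supplies rigor the paper leaves implicit.
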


The proof is given in Appendix~\ref{app:flow}.

This identity propagates the action-space dissipation at each state through the closed-loop dynamics and converts it into monotone value improvement.

\subsection{Bellman identities}
\label{sec:bellman_geometry}

For an admissible \((K,\Sigma)\), define
\[
\Xi_K:=\frac{\tau}{2}M_K^{-1},
\qquad
p_K(\cdot\mid x):=\cN(-M_K^{-1}N_Kx,\Xi_K).
\]
The next two identities are the LQ-specific inputs to the convergence proof; both are proved in Appendix~\ref{app:identities}.

\begin{lemma}[Bellman residual and Gaussian LSI]
\label{lem:bellman_lsi}
Fix an admissible \((K,\Sigma)\). The Bellman residual
\[
    R_{K,\Sigma}(x)
    :=
    V_{K,\Sigma}(x)
    -
    (\cT_*V_{K,\Sigma})(x)
\]
satisfies
\begin{equation}\label{eq:bellman_kl}
\begin{aligned}
    R_{K,\Sigma}(x)
    &=
    \tau\,
    \KL\!\left(
        \pi_{K,\Sigma}(\cdot\mid x)
        \,\middle\|\,
        p_K(\cdot\mid x)
    \right) \\
    &=
    x^\top E_K^\top M_K^{-1}E_Kx
    +
    \frac{\tau}{2}
    \left(
        \Tr(\Xi_K^{-1}\Sigma)
        -
        m
        -
        \log\det(\Xi_K^{-1}\Sigma)
    \right).
\end{aligned}
\end{equation}
Moreover, for every \(x\in\R^n\), define
\[
    \alpha_K
    :=
    \lambda_{\min}(\Xi_K^{-1})
    =
    \frac{2\lambda_{\min}(M_K)}{\tau},
    \qquad
    \underline\alpha
    :=
    \frac{2\lambda_R}{\tau}.
\]
Then \(\alpha_K\ge\underline\alpha\), and the Gibbs law
\(p_K(\cdot\mid x)\) satisfies
\begin{equation}\label{eq:lq_lsi}
\begin{aligned}
    \mathcal I(\nu\|p_K(\cdot\mid x))
    &\ge
    2\alpha_K
    \KL\!\left(
        \nu
        \,\middle\|\,
        p_K(\cdot\mid x)
    \right)\\
    &\ge
    2\underline\alpha
    \KL\!\left(
        \nu
        \,\middle\|\,
        p_K(\cdot\mid x)
    \right)
\end{aligned}
\end{equation}
for every probability law
\(\nu\ll p_K(\cdot\mid x)\). The coefficient \(\alpha_K\) is sharp for this Gaussian law. Equivalently, in the convention
\(\KL(\nu\|p_K)\le(C_{\mathrm{LSI}}(p_K)/2)\mathcal I(\nu\|p_K)\), the sharp constant is
\[
    C_{\mathrm{LSI}}(p_K)
    =
    \frac{\tau}{2\lambda_{\min}(M_K)}.
\]
\end{lemma}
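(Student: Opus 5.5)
The argument has three parts: a Gibbs variational identity for the soft Bellman operator, an explicit Gaussian KL computation, and the standard Gaussian log-Sobolev inequality via Bakry--Émery.

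\textbf{Step 1 (Bellman residual equals \(\tau\,\KL\)).} Fix \(x\) and write \(V=V_{K,\Sigma}\). By Lemma~\ref{lem:value_representation}, \(Q_V=Q^{\pi_{K,\Sigma}}\) is the quadratic \(u^\top M_Ku+2u^\top N_Kx+\psi_{K,\Sigma}(x)\) with \(M_K\succ0\). Hence \(Z(x):=\int e^{-Q_V(x,a)/\tau}\,\dd a\) is finite, and \(e^{-Q_V/\tau}/Z\) is exactly the Gaussian \(p_K(\cdot\mid x)=\cN(-M_K^{-1}N_Kx,\Xi_K)\); this follows by completing the square. For any density \(\varrho\) with finite second moment, the Gibbs variational identity gives
\[
\int (Q_V+\tau\log\varrho)\,\varrho\,\dd u=-\tau\log Z(x)+\tau\KL(\varrho\|p_K(\cdot\mid x)).
\]
Taking the infimum over \(\varrho\) yields \((\cT_*V)(x)=-\tau\log Z(x)\). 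The Bellman fixed-point property \(V=\cT_{\pi_{K,\Sigma}}V\) follows from the series definition~\eqref{eq:Vpi} via the Markov property; it can also be checked directly from~\eqref{eq:Vquad}--\eqref{eq:Qquad}. Subtracting the two expressions gives \(R_{K,\Sigma}(x)=\tau\KL(\pi_{K,\Sigma}(\cdot\mid x)\|p_K(\cdot\mid x))\).

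\textbf{Step 2 (explicit formula).} The Gaussian KL formula gives
\[
\KL(\cN(\mu_1,\Sigma)\|\cN(\mu_2,\Xi))=\tfrac12\bigl(\Tr(\Xi^{-1}\Sigma)-m-\log\det(\Xi^{-1}\Sigma)+(\mu_1-\mu_2)^\top\Xi^{-1}(\mu_1-\mu_2)\bigr).
\]
Here \(\mu_1-\mu_2=-Kx+M_K^{-1}N_Kx=-M_K^{-1}E_Kx\) and \(\Xi_K^{-1}=\tfrac{2}{\tau}M_K\). Multiplying by \(\tau\), the mean term becomes \(\tfrac{\tau}{2}\cdot\tfrac{2}{\tau}x^\top E_K^\top M_K^{-1}M_KM_K^{-1}E_Kx=x^\top E_K^\top M_K^{-1}E_Kx\), which is the second line of~\eqref{eq:bellman_kl}.

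\textbf{Step 3 (LSI and sharpness).} The potential of \(p_K\) is \(U(u)=\tfrac12(u-\mu)^\top\Xi_K^{-1}(u-\mu)\), with Hessian \(\Xi_K^{-1}\succeq\lambda_{\min}(\Xi_K^{-1})I=\alpha_K I\). The Bakry--Émery criterion then gives \(\mathcal I(\nu\|p_K)\ge2\alpha_K\KL(\nu\|p_K)\) in the stated convention. The comparison \(\alpha_K\ge\underline\alpha\) follows from \(M_K\succeq R\) in Lemma~\ref{lem:value_representation}, which gives \(\lambda_{\min}(M_K)\ge\lambda_R\).

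For sharpness, let \(e\) be a unit eigenvector of \(\Xi_K\) for its largest eigenvalue \(\sigma^2=1/\alpha_K\), and take \(\nu=\cN(\mu+se,\Xi_K)\). Then \(\KL(\nu\|p_K)=s^2/(2\sigma^2)\). The log-ratio is \(\log(\dd\nu/\dd p_K)=\langle \Xi_K^{-1}se,u-\mu\rangle+\text{const}\), so \(\mathcal I=s^2/\sigma^4\). The ratio is therefore \(\mathcal I/(2\KL)=1/\sigma^2=\alpha_K\), so equality holds. Rewriting the constant as \(C_{\mathrm{LSI}}=1/\alpha_K=\tau/(2\lambda_{\min}(M_K))\) is immediate.

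\textbf{Main obstacle.} The only delicate point is justifying \((\cT_*V)(x)=-\tau\log Z\) over the full class of action densities. One must check that the infimum in~\eqref{eq:Tstar} is over laws for which \(\int Q_V\,\dd\varrho\) is finite, and handle \(\varrho\) with infinite entropy or second moment. Those either give \(+\infty\) or are excluded, and the identity \(\int(Q_V+\tau\log\varrho)\,\dd\varrho=-\tau\log Z+\tau\KL\) extends to them in the extended-real sense because \(Q_V\) is bounded below. The Bakry--Émery step is standard for Gaussians; alternatively, one can reduce to the standard Gaussian LSI by the linear change of variables \(u\mapsto\Xi_K^{-1/2}(u-\mu)\).
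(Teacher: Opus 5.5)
Your proposal is correct and follows the paper's proof essentially step for step. The paper also combines the Gibbs variational identity, which gives $(\cT_*V)(x)=-\tau\log Z(x)$, with the fixed-point identity $V=\cT_{\pi_{K,\Sigma}}V$ to get $R_{K,\Sigma}=\tau\KL$, then applies the Gaussian KL formula with mean difference $-M_K^{-1}E_Kx$, and finally uses the Gaussian LSI together with $M_K\succeq R$. The differences are minor: you derive the LSI from Bakry--\'Emery and check sharpness explicitly using a translate of $p_K$ along the top eigenvector of $\Xi_K$, whereas the paper cites Bakry--Gentil--Ledoux for both facts.
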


The sharp coefficient follows directly from the covariance
\(\Xi_K=(\tau/2)M_K^{-1}\). The convergence proof uses the uniform lower bound
\(\underline\alpha=2\lambda_R/\tau\), obtained from
\(M_K\succeq R\); this lower bound need not be sharp for a fixed \(K\). In the general WPG analysis of \citet{zhu2026globalwpg}, the moving Gibbs law is controlled as a bounded perturbation of a Gaussian, leading to a coefficient of the form
\[
    \alpha_{\mathrm{gen}}
    =
    \frac{\beta}{\tau}
    \exp\!\left(-\frac{2U_{\max}}{\tau}\right).
\]
In the present LQ model, no perturbation step is needed. Since WPG dissipates \(\tau^2 \mathcal I(\pi\|p)\) and the Bellman residual is \(R=\tau\KL(\pi\|p)\), the uniform bound gives
\[
    \tau^2 \mathcal I(\pi\|p)
    \ge
    4\lambda_R R.
\]
Thus this part of the contraction estimate contains no factor of the form \(\exp(-c/\tau)\).

\begin{lemma}[Performance-difference identity]
\label{lem:perf}
Let \((K,\Sigma)\) and \((K',\Sigma')\) be admissible, and define
\[
    \Delta K:=K'-K,
    \qquad
    f_K(\Sigma)
    :=
    \frac{\tau}{2(1-\gamma)}\log\det\Sigma
    -
    \frac{1}{1-\gamma}\Tr(M_K\Sigma).
\]
Then
\begin{equation}\label{eq:perf_diff}
\begin{aligned}
    C(K',\Sigma')-C(K,\Sigma)
    &=
    \Tr\!\left(
        \Scorr_{K',\Sigma'}
        \left(
            \Delta K^\top M_K\Delta K
            +
            2\Delta K^\top E_K
        \right)
    \right) \\
    &\qquad
    +
    f_K(\Sigma)
    -
    f_K(\Sigma').
\end{aligned}
\end{equation}
Consequently, \(C\) is differentiable on the admissible set, with
\begin{equation}\label{eq:gradients}
    \nabla_K C(K,\Sigma)
    =
    2E_K\Scorr_{K,\Sigma},
    \qquad
    \nabla_\Sigma C(K,\Sigma)
    =
    \frac{1}{1-\gamma}
    \left(
        M_K-\frac{\tau}{2}\Sigma^{-1}
    \right).
\end{equation}
\end{lemma}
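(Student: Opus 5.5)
The plan is to compare the two value functions directly, using the quadratic representation in Lemma~\ref{lem:value_representation}. First, $C(K',\Sigma')-C(K,\Sigma)=\E_{x_0}[V_{K',\Sigma'}(x_0)-V_{K,\Sigma}(x_0)]$. Second, the standard telescoping argument applies along trajectories of the primed policy. For every $x$,
\[
V_{K',\Sigma'}(x)-V_{K,\Sigma}(x)=\E^{K',\Sigma'}_x\Bigl[\sum_{t\ge0}\gamma^t A(x_t)\Bigr],
\]
where the advantage is
\[
A(x):=\int\bigl(Q^{\pi_{K,\Sigma}}(x,u)+\tau\log\pi_{K',\Sigma'}(u\mid x)\bigr)\pi_{K',\Sigma'}(\dd u\mid x)-V_{K,\Sigma}(x).
\]
The telescoping is justified because $V_{K,\Sigma}\in\Vtwo$ and, by admissibility of $(K',\Sigma')$, $\gamma^T\E[1+\|x_T\|^2]\to0$. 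The boundary term therefore vanishes and the series converges absolutely.

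Next, compute $A(x)$ explicitly. By \eqref{eq:Qquad}, with $u=-K'x+\xi$ and $\xi\sim\cN(0,\Sigma')$, the integral equals
\[
x^\top K'^\top M_KK'x-2x^\top K'^\top N_Kx+\Tr(M_K\Sigma')+\psi_{K,\Sigma}(x)-\tfrac{\tau}{2}\bigl(m+\log((2\pi)^m\det\Sigma')\bigr).
\]
The same expression evaluated at $(K,\Sigma)$ equals $V_{K,\Sigma}(x)$ by the Bellman consistency of the value function under its own policy. Subtracting, the state-dependent part becomes
\[
x^\top\bigl(K'^\top M_KK'-K^\top M_KK-2(K'-K)^\top N_K\bigr)x .
\]
Writing $K'=K+\Delta K$ and $N_K=M_KK-E_K$, this simplifies to $x^\top(\Delta K^\top M_K\Delta K+2\Delta K^\top E_K)x$. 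The algebra must be checked carefully: the cross terms $2\Delta K^\top M_KK$ cancel between the expansion of $K'^\top M_K K'$ and the $N_K$ term. The constant part is
\[
\Tr(M_K(\Sigma'-\Sigma))-\tfrac{\tau}{2}\log\det\Sigma'+\tfrac{\tau}{2}\log\det\Sigma .
\]

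Summing over $t$ and taking $\E_{x_0\sim\cD}$, the quadratic part produces $\Tr(\Scorr_{K',\Sigma'}(\cdot))$ by definition \eqref{eq:state-correlation}. The constant part is multiplied by $\sum_t\gamma^t=1/(1-\gamma)$, which gives exactly $f_K(\Sigma)-f_K(\Sigma')$. This proves \eqref{eq:perf_diff}.

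For the gradients \eqref{eq:gradients}, take $(K',\Sigma')=(K+\varepsilon\dot K,\Sigma+\varepsilon\dot\Sigma)$; admissibility is an open condition, so this is legitimate for small $\varepsilon$. The $K$-part is $2\varepsilon\Tr(\Scorr_{K',\Sigma'}\dot K^\top E_K)+O(\varepsilon^2)$. To conclude $\nabla_KC=2E_K\Scorr_{K,\Sigma}$, I need continuity of $\Scorr_{K',\Sigma'}$ in the parameters. This follows from the Lyapunov representation: $\Scorr$ solves
\[
\Scorr=\Gamma_0+\gamma F_{K'}\Scorr F_{K'}^\top+\tfrac{\gamma}{1-\gamma}\Omega_{\Sigma'},
\]
with $\rho(\sqrt\gamma F_{K'})<1$, so $\Scorr$ depends continuously (indeed analytically) on $(K',\Sigma')$. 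The $\Sigma$-part is the derivative of $-f_K$, namely $\frac1{1-\gamma}(M_K-\frac\tau2\Sigma^{-1})$, using $\nabla\log\det\Sigma=\Sigma^{-1}$. Joint Fréchet differentiability then follows because the remainder is uniformly $O(\varepsilon^2)$ in direction.

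The main obstacle is the rigorous justification of the telescoping identity for unbounded quadratic costs. This requires verifying integrability of $\sum_t\gamma^t|A(x_t)|$ and the transversality limit under the primed policy. Both follow from the finiteness of $\Scorr_{K',\Sigma'}$ and from the fact that $A$ is quadratic in $x$; the remaining steps are matrix algebra.
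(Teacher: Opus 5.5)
Your proposal is correct and follows essentially the same route as the paper. Both arguments telescope $V_{K,\Sigma}$ along trajectories of the primed policy, with transversality coming from quadratic growth and admissibility. Both then compute the one-step advantage from the quadratic $Q$-function, cancel the cross terms up to a skew-symmetric part, and sum against $\Scorr_{K',\Sigma'}$ and $\sum_t\gamma^t=1/(1-\gamma)$. Your gradient step is slightly more careful than the paper's blockwise differentiation: you perturb $(K,\Sigma)$ jointly and derive the continuity of $\Scorr_{K',\Sigma'}$ explicitly from its discounted Lyapunov equation, whereas the paper uses that continuity only implicitly.
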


Together, these identities connect the stationary conditions of the reduced
flow to the global objective gap.

\subsection{Global well-posedness}
\label{sec:global_wellposedness}
To prove global existence, we must show that the trajectory cannot lose discounted stability or approach a singular covariance. The next proposition identifies admissibility with finite cost and shows that every finite-cost sublevel set is a compact subset of the admissible set.

\begin{proposition}[Finite cost, stability, and compact sublevels]
\label{prop:stability_compactness}
Under Assumption~\ref{ass:standard}, for every \(K\in\R^{m\times n}\) and
\(\Sigma\in\bbS_{++}^m\),
\[
    C(K,\Sigma)<\infty
    \quad\Longleftrightarrow\quad
    \rho\!\left(\sqrt{\gamma}\,F_K\right)<1.
\]
Moreover, for every \(c\in\R\), the sublevel set
\[
    \mathcal S_c
    :=
    \left\{
        (K,\Sigma)\in\cA_{\mathrm{adm}}
        :
        C(K,\Sigma)\le c
    \right\}
\]
is compact. In particular, it has positive distance from the boundary of
\(\cA_{\mathrm{adm}}\).
\end{proposition}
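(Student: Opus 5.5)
The plan has three parts: the equivalence between finite cost and discounted stability, a uniform bound on the gain over a sublevel set, and matching two-sided bounds on the covariance.

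\emph{Equivalence.} For the direction ``stable $\Rightarrow$ finite cost'' we use Lemma~\ref{lem:value_representation}. When $\rho(\sqrt\gamma F_K)<1$, the series \eqref{eq:PK_series} converges, and \eqref{eq:CKS}, \eqref{eq:qKSigma} give a finite value for $C(K,\Sigma)$.

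For the converse we work directly from the definition \eqref{eq:Vpi}. Since $\Sigma$ is fixed, the entropy term in each stage has expectation $-\frac{\tau}{2}(m+\log((2\pi)^m\det\Sigma))$, a constant. Summed with weights $\gamma^t$, this contributes a finite constant. Finite cost therefore forces
\[
\sum_t\gamma^t\E[x_t^\top L_Kx_t]<\infty .
\]
Since $\E[x_tx_t^\top]\succeq (F_K^t)\Gamma_0(F_K^t)^\top$ and $\Gamma_0\succ0$, this gives
\[
\sum_t\gamma^t\|L_K^{1/2}F_K^t\|^2<\infty,
\]
that is, the pair $(L_K^{1/2},\sqrt\gamma F_K)$ has a convergent observability Gramian.

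Write $L_K=Q+K^\top RK$, and let $\theta$ be an unstable mode of $\sqrt\gamma F_K$, with $F_K\theta=\lambda\theta$ and $\sqrt\gamma|\lambda|\ge1$. Convergence of the Gramian forces $L_K^{1/2}\theta=0$, hence $Q^{1/2}\theta=0$ and $K\theta=0$. Then $A\theta=F_K\theta=\lambda\theta$, so $\theta$ is an unstable mode of $\sqrt\gamma A$ that is unobserved by $Q^{1/2}$. This contradicts Assumption~\ref{ass:standard}(ii); this is the content cited as Lemma~\ref{lem:finite_adm}. Complex eigenvectors are handled by taking real and imaginary parts, or by working with Hermitian forms.

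\emph{Bounds on the gain.} On $\mathcal S_c$, first obtain a lower bound on $q_{K,\Sigma}$ that is uniform in $\Sigma$. Using $M_K\succeq R$ and minimizing $\Tr(\Sigma M_K)-\frac\tau2\log\det\Sigma$ over $\Sigma$ (the minimizer is $\Sigma=\frac\tau2 M_K^{-1}$) gives
\[
(1-\gamma)q_{K,\Sigma}\ \ge\ \tfrac{\tau}{2}\log\det(2M_K/\tau)+\text{const}\ \ge\ \tfrac\tau2\log\det(2R/\tau)+\text{const},
\]
with $\gamma\Tr(WP_K)\ge0$. Hence $\Tr(\Gamma_0P_K)\le c-\underline q$, and so
\[
\|P_K\|\le (c-\underline q)/\mu .
\]
From $P_K\succeq K^\top RK$ we get $\|K\|^2\le\|P_K\|/\lambda_R$, so $K$ is bounded on $\mathcal S_c$.

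\emph{Bounds on the covariance.} With $\|P_K\|$ bounded, $\|M_K\|$ is bounded, and $\Tr(\Gamma_0P_K)\ge0$. The function
\[
\phi(\Sigma)=\Tr(\Sigma M_K)-\tfrac\tau2\log\det\Sigma\ \ge\ \sum_i\bigl(\lambda_R s_i-\tfrac\tau2\log s_i\bigr),
\]
where $s_i$ are the eigenvalues of $\Sigma$, is coercive in each eigenvalue separately. Each summand is bounded below and tends to $+\infty$ both as $s_i\to0$ and as $s_i\to\infty$. Therefore $\phi\le (1-\gamma)c+\text{const}$ pins every $s_i$ into a compact interval $[s_-,s_+]\subset(0,\infty)$.

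\emph{Closedness, compactness, and distance to the boundary (main obstacle).} Take $(K_j,\Sigma_j)\in\mathcal S_c$ converging to $(K,\Sigma)$. By the covariance bounds, $\Sigma\succ0$. The difficulty is to exclude $\rho(\sqrt\gamma F_K)=1$ in the limit.

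The plan is to avoid spectral continuity arguments and argue through lower semicontinuity of the cost. Each finite-horizon truncation of $C$ is continuous in $(K,\Sigma)$, and the stage costs are bounded below. So on $\mathcal S_c$ I would replace $c(x,u)$ by $c\ge0$ and absorb the entropy constants, which are bounded there. Then
\[
C(K,\Sigma)\le\liminf_j C(K_j,\Sigma_j)\le c
\]
by Fatou's lemma or a truncation argument. By the equivalence in the first part, the limit is stable, hence admissible, and lies in $\mathcal S_c$.

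So $\mathcal S_c$ is bounded and closed in the full parameter space, hence compact. A compact subset of the open set $\cA_{\mathrm{adm}}$ automatically has positive distance from its boundary.
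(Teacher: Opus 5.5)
Your proof is correct. The boundedness part (coercivity of $\lambda_R s-\frac\tau2\log s$ in each eigenvalue, together with $P_K\succeq K^\top RK$) matches the paper's. The difference is in how you handle the two places where detectability enters.

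\emph{Finite cost $\Rightarrow$ stability.} The paper builds the limit $P$ of the finite-horizon Gramians $P_N$, shows that it solves the Lyapunov equation, and tests $(1-\gamma|\lambda|^2)v^*Pv=v^*L_Kv$ against an unstable eigenvector. You test the convergent series $\sum_t\gamma^t\|L_K^{1/2}F_K^t\|^2$ directly against that eigenvector. This is the same idea, slightly shorter.

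\emph{Closedness.} Here the difference is more substantive. The paper uses the lower bound on $q_{K_j,\Sigma_j}$ to bound $P_{K_j}$, extracts a convergent subsequence, and passes to the limit in the Lyapunov equation. It then reruns the eigenvector argument to get stability of the limit, and only afterwards invokes continuity of $C$ on the admissible set to conclude $C(K,\Sigma)\le c$. You instead write $C$ as $\sum_t\gamma^t\bigl(\Tr(L_KX_t)+\Tr(R\Sigma)\bigr)$ plus an entropy constant that is continuous in $\Sigma\succ0$. This is a series of nonnegative functions that are polynomial in $(K,\Sigma)$ on all of $\R^{m\times n}\times\bbS^m_{++}$, plus a continuous term, so $C$ is lower semicontinuous there. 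Hence $C(K,\Sigma)\le\liminf_j C(K_j,\Sigma_j)\le c$ immediately, and stability of the limit follows from the equivalence you already proved.

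\emph{What each buys.} Your route reuses the first part of the proposition. It needs neither a convergent subsequence of value matrices nor continuity of $K\mapsto P_K$ on the admissible set. The paper's route stays entirely within the admissible-set formulas ($P_K$, $q_{K,\Sigma}$). As a byproduct it exhibits a uniform bound on $P_K$ over the sublevel set, which you also obtain separately through your $\Sigma$-uniform lower bound on $q_{K,\Sigma}$.
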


The proof is given in Appendix~\ref{app:stability}. Detectability is
essential: without it, a discounted-unstable mode that is invisible to the state cost may still have finite objective value.

\begin{lemma}[Global well-posedness of the closed parameter flow]
\label{lem:global}
Under Assumption~\ref{ass:standard}, the ODE \eqref{eq:KSigma_ode} with initial
condition \((K_0,\Sigma_0)\) has a unique solution
\(\{(K_t,\Sigma_t)\}_{t\ge0}\) for all \(t\ge0\). The trajectory remains
admissible and stays in the sublevel set
\[
    \mathcal S_{C(K_0,\Sigma_0)}
    :=
    \left\{
        (K,\Sigma)\in\cA_{\mathrm{adm}}:
        C(K,\Sigma)\le C(K_0,\Sigma_0)
    \right\}.
\]
Thus the closed WPG parameter flow is well defined for all \(t\ge0\).
\end{lemma}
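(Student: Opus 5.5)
The argument is the standard ODE continuation scheme: local existence and uniqueness from smoothness of the vector field, monotone decrease of the cost along the trajectory, and confinement to a compact sublevel set from Proposition~\ref{prop:stability_compactness} to rule out finite-time blow-up or exit from \(\cA_{\mathrm{adm}}\).

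\emph{Local well-posedness.} The set \(\cA_{\mathrm{adm}}\) is open in \(\R^{m\times n}\times\bbS^m\), since both \(\Sigma\succ0\) and \(\rho(\sqrt\gamma F_K)<1\) are open conditions. On this set, \(P_K\) solves the Lyapunov equation \eqref{eq:PK_lyap}. In vectorized form,
\[
\operatorname{vec}(P_K)=(I-\gamma F_K^\top\otimes F_K^\top)^{-1}\operatorname{vec}(L_K),
\]
and the inverse exists because \(\rho(\gamma F_K\otimes F_K)=\rho(\sqrt\gamma F_K)^2<1\). Hence \(K\mapsto P_K\) is real-analytic, and so are \(M_K\), \(N_K\) and \(E_K\). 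The right-hand side of \eqref{eq:KSigma_ode} is therefore locally Lipschitz on \(\cA_{\mathrm{adm}}\). Picard--Lindelöf then gives a unique maximal solution on \([0,T^*)\) that stays in \(\cA_{\mathrm{adm}}\). If \(T^*<\infty\), the solution must eventually leave every compact subset of \(\cA_{\mathrm{adm}}\).

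\emph{Cost monotonicity.} On \([0,T^*)\), Proposition~\ref{prop:gaussian_reduction} says the trajectory generates a linear-Gaussian WPG solution. Lemma~\ref{lem:resolvent} then gives \(\frac{\dd}{\dd t}V_t(x)\le0\) for every \(x\). Averaging over \(x_0\sim\cD\) requires exchanging the derivative and the expectation. Since \(V_t(x)=x^\top P_{K_t}x+q_{K_t,\Sigma_t}\) with smooth coefficients, this is justified, and we get
\[
\frac{\dd}{\dd t}C(K_t,\Sigma_t)=\Tr\!\Bigl(\Gamma_0\tfrac{\dd}{\dd t}P_{K_t}\Bigr)+\tfrac{\dd}{\dd t}q_{K_t,\Sigma_t}\le0.
\]
As a cross-check, the chain rule with \eqref{eq:gradients} gives
\[
\frac{\dd}{\dd t}C=-4\Tr(E_t^\top E_t\Scorr_t)-\frac{2}{1-\gamma}\Tr\!\Bigl(\bigl(M_t-\tfrac\tau2\Sigma_t^{-1}\bigr)(M_t\Sigma_t+\Sigma_tM_t-\tau I)\Bigr).
\]
The second term is nonpositive: with \(X=M_t-\tfrac\tau2\Sigma_t^{-1}\), the product \(M_t\Sigma_t+\Sigma_tM_t-\tau I\) equals \(X\Sigma_t+\Sigma_tX\), so the trace is \(2\Tr(X\Sigma_tX)\ge0\). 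Consequently \(C(K_t,\Sigma_t)\le C(K_0,\Sigma_0)\), which is finite by Assumption~\ref{ass:standard}(iii). The whole trajectory therefore lies in \(\mathcal S_{C(K_0,\Sigma_0)}\).

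\emph{Continuation.} By Proposition~\ref{prop:stability_compactness}, the set \(\mathcal S_{C(K_0,\Sigma_0)}\) is compact and contained in \(\cA_{\mathrm{adm}}\). The trajectory thus stays in a fixed compact subset of the open domain, contradicting the escape property of a finite maximal time. Hence \(T^*=\infty\), and uniqueness on every finite interval gives global uniqueness.

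The main obstacle is the monotonicity step, specifically making the derivative of \(C\) rigorous. The cleanest route is the explicit chain-rule computation above, which relies only on the differentiability asserted in Lemma~\ref{lem:perf}. This avoids having to interchange limits in the resolvent series of Lemma~\ref{lem:resolvent}. The compactness itself is taken from Proposition~\ref{prop:stability_compactness}.
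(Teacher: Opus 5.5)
Your proposal is correct and follows essentially the same route as the paper. Local existence and uniqueness come from smoothness of \(K\mapsto P_K\) via the vectorized Lyapunov equation, \(C\) decreases along the flow, and confinement to the compact sublevel set of Proposition~\ref{prop:stability_compactness} continues the solution to all \(t\ge0\). Your chain-rule computation of \(\frac{\dd}{\dd t}C\) is exactly the paper's Lemma~\ref{lem:objective_dissipation}, which is the monotonicity argument the paper uses, so you do not need the detour through Lemma~\ref{lem:resolvent}.
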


The proof is given in Appendix~\ref{app:stability}. It is a finite-dimensional ODE argument based on descent of the cost and compactness of finite-cost sublevel sets.

\subsection{Global convergence and temperature dependence}

The next theorem gives the global objective rate. Two corollaries make its parameter and temperature dependence explicit.

\begin{theorem}[Global exponential convergence]
\label{thm:main}
Under Assumption~\ref{ass:standard}, let \(\{(K_t,\Sigma_t)\}_{t\ge0}\) be the WPG parameter trajectory from an admissible initialization \((K_0,\Sigma_0)\), and let \((K_\star,\Sigma_\star)\) be the optimizer in Proposition~\ref{prop:optimizer}. Define
\[
\Scorr_\star:=\Scorr_{K_\star,\Sigma_\star},
\qquad
\lambda_\tau:=\frac{4\mu\lambda_R}{\norm{\Scorr_\star}}>0.
\]
Then, along the WPG parameter trajectory,
\begin{equation}
\label{eq:pl_differential}
-\frac{\dd}{\dd t}C(K_t,\Sigma_t)
\ge
\lambda_\tau\bigl(C(K_t,\Sigma_t)-C(K_\star,\Sigma_\star)\bigr).
\end{equation}
Consequently, for every \(t\ge0\),
\begin{equation}
\label{eq:main_rate}
C(K_t,\Sigma_t)-C(K_\star,\Sigma_\star)
\le
\exp(-\lambda_\tau t)
\bigl(C(K_0,\Sigma_0)-C(K_\star,\Sigma_\star)\bigr).
\end{equation}
Equivalently, if \(0<\varepsilon<C(K_0,\Sigma_0)-C(K_\star,\Sigma_\star)\), then the objective gap is at most \(\varepsilon\) once
\begin{equation}
\label{eq:time_to_accuracy}
    t
    \ge
    \frac{\norm{\Scorr_\star}}{4\mu\lambda_R}
    \log\!\left(
        \frac{C(K_0,\Sigma_0)-C(K_\star,\Sigma_\star)}{\varepsilon}
    \right).
\end{equation}
\end{theorem}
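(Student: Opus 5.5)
We prove the differential inequality \eqref{eq:pl_differential}; the rate \eqref{eq:main_rate} then follows from Gr\"onwall's inequality, and \eqref{eq:time_to_accuracy} by solving \(\exp(-\lambda_\tau t)\Delta_0\le\varepsilon\) for \(t\). By Lemma~\ref{lem:global} the trajectory exists for all \(t\ge0\), remains admissible, and stays in the compact sublevel set. Hence \(C(K_t,\Sigma_t)=\E_{x_0\sim\cD}[V_t(x_0)]\) is differentiable in \(t\), and differentiation can be passed under the expectation because \(V_t\) is quadratic with continuously varying coefficients (Lemma~\ref{lem:value_representation}).

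\emph{Dissipation in terms of the Bellman residual.} Integrate \eqref{eq:value_monotonicity} against \(\cD\). This gives
\[
-\tfrac{\dd}{\dd t}C(K_t,\Sigma_t)=\E_{x_0\sim\cD}\Bigl[\textstyle\sum_i\gamma^i(\Pop_t^ig_t)(x_0)\Bigr]=\E^{K_t,\Sigma_t}\Bigl[\textstyle\sum_i\gamma^i g_t(x_i)\Bigr].
\]
By Lemma~\ref{lem:bellman_lsi}, \(g_t=\tau^2\mathcal I(\pi_t\|p_t)\ge 2\tau^2\underline\alpha\,\KL(\pi_t\|p_t)=4\lambda_R R_{K_t,\Sigma_t}\). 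Rather than keeping the full discounted sum, I will use only the part that is needed. The residual has the form \(x^\top E^\top M^{-1}Ex+c_\Sigma\), where \(c_\Sigma\ge0\) is the Gaussian covariance KL term. The discounted sum of the state part equals \(\Tr(\Scorr_t E_t^\top M_t^{-1}E_t)\ge \mu\Tr(E_t^\top M_t^{-1}E_t)\), since \(\Scorr_t\succeq\Gamma_0\succeq\mu I\). The constant part sums to \(c_{\Sigma_t}/(1-\gamma)\). Altogether,
\[
-\dot C\ge 4\lambda_R\Bigl(\mu\Tr(E^\top M^{-1}E)+\tfrac{1}{1-\gamma}c_\Sigma\Bigr).
\]

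\emph{Upper bound on the gap.} Apply Lemma~\ref{lem:perf} with \((K',\Sigma')=(K_\star,\Sigma_\star)\) and \((K,\Sigma)=(K_t,\Sigma_t)\), so that
\[
C(K,\Sigma)-C_\star=-\Tr\bigl(\Scorr_\star(\Delta K^\top M\Delta K+2\Delta K^\top E)\bigr)+f_K(\Sigma_\star)-f_K(\Sigma).
\]
Complete the square:
\[
-(\Delta K^\top M\Delta K+2\Delta K^\top E)=E^\top M^{-1}E-(\Delta K+M^{-1}E)^\top M(\Delta K+M^{-1}E)\preceq E^\top M^{-1}E.
\]
Therefore the \(K\)-part is at most \(\Tr(\Scorr_\star E^\top M^{-1}E)\le\norm{\Scorr_\star}\Tr(E^\top M^{-1}E)\). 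For the covariance part, \(f_K\) is concave and maximized at \(\Xi_K=\tfrac{\tau}{2}M_K^{-1}\). One checks directly that \(\sup_{\Sigma'}f_K(\Sigma')-f_K(\Sigma)=\tfrac{1}{1-\gamma}\cdot\tfrac{\tau}{2}(\Tr(\Xi^{-1}\Sigma)-m-\log\det(\Xi^{-1}\Sigma))=c_\Sigma/(1-\gamma)\), so \(f_K(\Sigma_\star)-f_K(\Sigma)\le c_\Sigma/(1-\gamma)\). Combining the two parts gives
\[
C-C_\star\le\norm{\Scorr_\star}\Tr(E^\top M^{-1}E)+\tfrac{c_\Sigma}{1-\gamma}.
\]

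\emph{Comparison.} We need \(4\lambda_R\bigl(\mu T+c/(1-\gamma)\bigr)\ge\tfrac{4\mu\lambda_R}{\norm{\Scorr_\star}}\bigl(\norm{\Scorr_\star}T+c/(1-\gamma)\bigr)\), with \(T=\Tr(E^\top M^{-1}E)\) and \(c=c_\Sigma\). The \(T\)-terms match exactly, and the \(c\)-terms reduce to \(\mu\le\norm{\Scorr_\star}\), which holds because \(\Scorr_\star\succeq\Gamma_0\) and so \(\norm{\Scorr_\star}\ge\lambda_{\max}(\Gamma_0)\ge\mu\). This yields \eqref{eq:pl_differential}, and positivity of \(\lambda_\tau\) follows from \(\mu,\lambda_R>0\) and finiteness of \(\Scorr_\star\).

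The main obstacle is the bookkeeping in the first step. We must confirm that the \(\cD\)-average of the resolvent series is exactly the discounted occupancy sum, so that the state part produces \(\Scorr_t\succeq\Gamma_0\) and the constant part produces the factor \(1/(1-\gamma)\) matching the one in \(f_K\). We must also justify interchanging \(\dd/\dd t\), the expectation, and the series. For the interchange, I would use uniform bounds on the compact sublevel set from Proposition~\ref{prop:stability_compactness}.
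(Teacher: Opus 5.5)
Your proposal is correct and follows the paper's proof essentially step for step. You use the resolvent identity together with the uniform Gaussian LSI to get $-\frac{\dd}{\dd t}C_t\ge 4\lambda_R\mathcal R_t$, split the Gaussian KL residual into the gain part $\Tr(\Scorr_t E_t^\top M_t^{-1}E_t)$ and the covariance part $B_t=c_{\Sigma_t}/(1-\gamma)$, and bound the gap by the performance-difference identity (completing the square in $\Delta K$, concavity of $f_K$ with maximizer $\Xi_K$). You then compare using $\Scorr_t\succeq\mu I$, $\Scorr_\star\preceq\norm{\Scorr_\star}I$ and $\mu\le\norm{\Scorr_\star}$; your split into $\Tr(\Scorr_t H_t)\ge\mu\Tr H_t$ and $\Tr(\Scorr_\star H_t)\le\norm{\Scorr_\star}\Tr H_t$ is just an unpacked form of the paper's $\Scorr_t\succeq(\mu/\norm{\Scorr_\star})\Scorr_\star$.
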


\begin{corollary}[Gain and entropy-weighted covariance convergence]
\label{cor:parameter_rate}
Define
\[
    \Phi(X):=\Tr(X)-m-\log\det X,
    \qquad X\in\bbS_{++}^m.
\]
Along the trajectory in Theorem~\ref{thm:main},
\begin{equation}
\label{eq:parameter_rate}
\begin{aligned}
&\mu\lambda_R\norm{K_t-K_\star}_F^2
+
\frac{\tau}{2(1-\gamma)}
\Phi\!\left(
    \Sigma_\star^{-1/2}\Sigma_t\Sigma_\star^{-1/2}
\right) \\
&\qquad\le
\exp(-\lambda_\tau t)
\bigl(C(K_0,\Sigma_0)-C(K_\star,\Sigma_\star)\bigr).
\end{aligned}
\end{equation}
In particular, the squared gain error and the entropy-weighted relative covariance error decay with the same exponential exponent as the objective gap.
\end{corollary}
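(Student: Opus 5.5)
The plan is to obtain \eqref{eq:parameter_rate} as a pointwise-in-time lower bound on the objective gap and then invoke Theorem~\ref{thm:main}. Concretely, I would show that for every admissible \((K,\Sigma)\),
\[
C(K,\Sigma)-C(K_\star,\Sigma_\star)
\ge
\mu\lambda_R\norm{K-K_\star}_F^2
+
\frac{\tau}{2(1-\gamma)}
\Phi\!\left(\Sigma_\star^{-1/2}\Sigma\Sigma_\star^{-1/2}\right).
\]
Evaluating this at \((K_t,\Sigma_t)\), which is admissible for all \(t\ge0\) by Lemma~\ref{lem:global}, and combining it with \eqref{eq:main_rate} then gives the claim.

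The key tool is the performance-difference identity, Lemma~\ref{lem:perf}, applied with base point \((K_\star,\Sigma_\star)\) and comparison point \((K',\Sigma')=(K,\Sigma)\). By Proposition~\ref{prop:optimizer}, \(E_{K_\star}=0\), so the cross term \(2\Delta K^\top E_{K_\star}\) drops out. This leaves
\[
C(K,\Sigma)-C(K_\star,\Sigma_\star)
=
\Tr\!\left(\Scorr_{K,\Sigma}\,\Delta K^\top M_{K_\star}\Delta K\right)
+f_{K_\star}(\Sigma_\star)-f_{K_\star}(\Sigma),
\qquad \Delta K:=K-K_\star.
\]

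\emph{Gain term.} We have \(M_{K_\star}\succeq R\succeq\lambda_R I\) by Lemma~\ref{lem:value_representation}, so \(X:=\Delta K^\top M_{K_\star}\Delta K\succeq\lambda_R\Delta K^\top\Delta K\succeq0\). Also \(\Scorr_{K,\Sigma}\succeq\Gamma_0\succeq\mu I\). Since the trace of a product of PSD matrices satisfies \(\Tr(\Scorr X)\ge\lambda_{\min}(\Scorr)\Tr(X)\), we get
\[
\Tr(\Scorr_{K,\Sigma}X)\ge\mu\lambda_R\Tr(\Delta K^\top\Delta K)=\mu\lambda_R\norm{\Delta K}_F^2 .
\]

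\emph{Covariance term.} Write \(M_\star:=M_{K_\star}\) and use \(M_\star=\tfrac{\tau}{2}\Sigma_\star^{-1}\) from Proposition~\ref{prop:optimizer}. Then \(\Tr(M_\star\Sigma)=\tfrac{\tau}{2}\Tr(\Sigma_\star^{-1}\Sigma)\) and \(\Tr(M_\star\Sigma_\star)=\tfrac{\tau}{2}m\). Substituting into \(f_{K_\star}\) gives
\[
f_{K_\star}(\Sigma_\star)-f_{K_\star}(\Sigma)
=
\frac{\tau}{2(1-\gamma)}
\Bigl(\Tr(\Sigma_\star^{-1}\Sigma)-m-\log\det(\Sigma_\star^{-1}\Sigma)\Bigr),
\]
which is exactly \(\tfrac{\tau}{2(1-\gamma)}\Phi(\Sigma_\star^{-1/2}\Sigma\Sigma_\star^{-1/2})\) by similarity invariance of trace and determinant. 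This term is an equality rather than a bound, and it is nonnegative because \(\Phi\ge0\).

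Adding the two pieces gives the displayed lower bound, and the exponential rate then transfers directly from Theorem~\ref{thm:main}. There is no real obstacle here. The only points needing care are that the state-correlation matrix in Lemma~\ref{lem:perf} is the one at the comparison point \((K_t,\Sigma_t)\), so only the uniform bound \(\Scorr\succeq\Gamma_0\) is available, and the sign bookkeeping in \(f_{K_\star}\).
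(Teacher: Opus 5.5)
Your proposal is correct and matches the paper's proof. Like the paper, you apply Lemma~\ref{lem:perf} with $(K_\star,\Sigma_\star)$ as the reference pair, use $E_{K_\star}=0$ to remove the cross term, identify the covariance part as $\frac{\tau}{2(1-\gamma)}\Phi(\Sigma_\star^{-1/2}\Sigma\Sigma_\star^{-1/2})$, bound the gain part via $\Scorr_{K,\Sigma}\succeq\mu I$ and $M_{K_\star}\succeq\lambda_R I$, and conclude with Theorem~\ref{thm:main}; you additionally write out the covariance algebra and the trace inequality that the paper leaves implicit.
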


\begin{proof}
Apply the performance-difference identity in Lemma~\ref{lem:perf} with the optimal pair \((K_\star,\Sigma_\star)\) as the reference policy. Since \(E_{K_\star}=0\),
\[
\begin{aligned}
C(K,\Sigma)-C(K_\star,\Sigma_\star)
&=
\Tr\!\left(
\Scorr_{K,\Sigma}
(K-K_\star)^\top M_{K_\star}(K-K_\star)
\right) \\
&\quad+
\frac{\tau}{2(1-\gamma)}
\Phi\!\left(
\Sigma_\star^{-1/2}\Sigma\Sigma_\star^{-1/2}
\right).
\end{aligned}
\]
Using \(\Scorr_{K,\Sigma}\succeq\Gamma_0\succeq\mu I\) and \(M_{K_\star}\succeq R\succeq\lambda_R I\), and then applying Theorem~\ref{thm:main}, proves the claim.
\end{proof}

\begin{corollary}[Temperature dependence of the exponential rate]
\label{cor:temperature_rate}
Let
\[
    F_\star:=A-BK_\star,
    \qquad
    M_\star:=M_{K_\star}.
\]
The matrices \(K_\star\), \(P_\star\), \(M_\star\), and \(F_\star\) do not depend on \(\tau\). Moreover,
\begin{equation}
\label{eq:Sstar_affine_tau}
    \Scorr_\star(\tau)
    =
    \Scorr_\star^{(0)}
    +
    \tau\Scorr_\star^{(1)},
\end{equation}
where \(\Scorr_\star^{(0)}\) and \(\Scorr_\star^{(1)}\) are the unique positive semidefinite solutions of
\begin{align*}
\Scorr_\star^{(0)}
&=
\Gamma_0
+
\gamma F_\star\Scorr_\star^{(0)}F_\star^\top
+
\frac{\gamma}{1-\gamma}W,\\
\Scorr_\star^{(1)}
&=
\gamma F_\star\Scorr_\star^{(1)}F_\star^\top
+
\frac{\gamma}{2(1-\gamma)}
B M_\star^{-1}B^\top.
\end{align*}
Consequently,
\begin{equation}
\label{eq:lambda_tau_lower}
    \lambda_\tau
    \ge
    \frac{4\mu\lambda_R}
    {\norm{\Scorr_\star^{(0)}}+\tau\norm{\Scorr_\star^{(1)}}},
\end{equation}
and
\[
    \lim_{\tau\downarrow0}\lambda_\tau
    =
    \frac{4\mu\lambda_R}{\norm{\Scorr_\star^{(0)}}}>0.
\]
Thus, for each fixed LQ problem, the exponent has the displayed positive limit. The time constant \(\norm{\Scorr_\star(\tau)}/(4\mu\lambda_R)\) in \eqref{eq:time_to_accuracy} is at most affine in \(\tau\) and converges to \(\norm{\Scorr_\star^{(0)}}/(4\mu\lambda_R)\) as \(\tau\downarrow0\).
\end{corollary}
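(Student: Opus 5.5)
The plan is to derive a discounted Lyapunov equation for $\Scorr_\star$ directly from the closed-loop dynamics and then split it by linearity in $\tau$.

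\emph{Step 1: independence of $\tau$.} By Proposition~\ref{prop:optimizer}, $P_\star$ is the unique stabilizing solution of the discounted Riccati equation. That equation involves only $A,B,Q,R,\gamma$. Hence $P_\star$ does not depend on $\tau$, and neither do $M_\star=R+\gamma B^\top P_\star B$, $K_\star=M_\star^{-1}N_{K_\star}$ and $F_\star=A-BK_\star$. The temperature enters only through $\Sigma_\star=\frac{\tau}{2}M_\star^{-1}$.

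\emph{Step 2: Lyapunov equation for $\Scorr_\star$.} Under $\pi_\star$ the closed loop is $x_{t+1}=F_\star x_t+\eta_t$, with $\eta_t=B\xi_t+w_t$. The noise $\eta_t$ is mean zero, independent of $x_t$, and has covariance
\[
\Omega_{\Sigma_\star}=\tfrac{\tau}{2}BM_\star^{-1}B^\top+W.
\]
Gaussianity of $w_t$ or $x_0$ is not needed, only independence and zero mean. These give the cross term $\E[F_\star x_t\eta_t^\top]=0$, so
\[
\Gamma_{t+1}:=\E[x_{t+1}x_{t+1}^\top]=F_\star\Gamma_tF_\star^\top+\Omega_{\Sigma_\star}.
\]
Multiply by $\gamma^{t+1}$ and sum over $t\ge0$. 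All series converge absolutely because $\rho(\sqrt\gamma F_\star)<1$ (admissibility of the optimizer). Using $\sum_{t\ge0}\gamma^{t+1}=\gamma/(1-\gamma)$, this gives
\[
\Scorr_\star=\Gamma_0+\gamma F_\star\Scorr_\star F_\star^\top+\frac{\gamma}{1-\gamma}\Omega_{\Sigma_\star}.
\]

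\emph{Step 3: uniqueness and the affine split.} For any constant $C\succeq0$, the equation $X=C+\gamma F_\star XF_\star^\top$ has the unique solution $X=\sum_{k\ge0}\gamma^kF_\star^kC(F_\star^\top)^k$, which is positive semidefinite. Uniqueness holds because the linear map $X\mapsto\gamma F_\star XF_\star^\top$ has spectral radius $\rho(\sqrt\gamma F_\star)^2<1$. Define $\Scorr_\star^{(0)}$ and $\Scorr_\star^{(1)}$ as the solutions with the constant terms stated in the corollary; both constants are PSD and independent of $\tau$ by Step 1. Then $\Scorr_\star^{(0)}+\tau\Scorr_\star^{(1)}$ solves the equation of Step 2, because its constant term is
\[
\Gamma_0+\frac{\gamma}{1-\gamma}W+\frac{\gamma\tau}{2(1-\gamma)}BM_\star^{-1}B^\top .
\]
Uniqueness then yields \eqref{eq:Sstar_affine_tau}.

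\emph{Step 4: consequences for the rate.} The triangle inequality gives $\norm{\Scorr_\star(\tau)}\le\norm{\Scorr_\star^{(0)}}+\tau\norm{\Scorr_\star^{(1)}}$, which is \eqref{eq:lambda_tau_lower} for $\lambda_\tau=4\mu\lambda_R/\norm{\Scorr_\star(\tau)}$. The map $\tau\mapsto\norm{\Scorr_\star^{(0)}+\tau\Scorr_\star^{(1)}}$ is continuous and tends to $\norm{\Scorr_\star^{(0)}}$ as $\tau\downarrow0$. Moreover $\Scorr_\star^{(0)}\succeq\Gamma_0\succ0$, so this limit is finite and nonzero, and the limit of $\lambda_\tau$ exists and is positive. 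The time constant $\norm{\Scorr_\star(\tau)}/(4\mu\lambda_R)$ is bounded above by the affine function $(\norm{\Scorr_\star^{(0)}}+\tau\norm{\Scorr_\star^{(1)}})/(4\mu\lambda_R)$ and converges to $\norm{\Scorr_\star^{(0)}}/(4\mu\lambda_R)$.

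\emph{Main obstacle.} Little here is delicate. The one point needing care is Step 2 without Gaussianity: the second-moment recursion must be derived from independence and zero mean alone, and the interchange of summation and expectation must be justified. The latter follows from monotone convergence applied to $\Tr$ of the PSD terms, together with the summability guaranteed by $\rho(\sqrt\gamma F_\star)<1$.
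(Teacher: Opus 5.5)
Your proposal is correct and follows essentially the same route as the paper. You use the $\tau$-independence of the Riccati solution, write the discounted Lyapunov equation for $\Scorr_\star$ with noise covariance $\frac{\tau}{2}BM_\star^{-1}B^\top+W$, split it by linearity in $\tau$, and finish with the triangle inequality and continuity of the spectral norm. The only difference is that you derive the Lyapunov equation from the second-moment recursion and justify uniqueness via the spectral radius $\rho(\sqrt{\gamma}F_\star)^2<1$ of the map $X\mapsto\gamma F_\star XF_\star^\top$, whereas the paper's short proof takes both for granted.
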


\begin{proof}
Proposition~\ref{prop:optimizer} shows that the Riccati equation, and hence \(K_\star\), \(P_\star\), \(M_\star\), and \(F_\star\), is independent of \(\tau\), while
\[
    \Sigma_\star(\tau)=\frac{\tau}{2}M_\star^{-1}.
\]
For any admissible \((K,\Sigma)\), the discounted state-correlation matrix satisfies
\[
    \Scorr_{K,\Sigma}
    =
    \Gamma_0
    +
    \gamma F_K\Scorr_{K,\Sigma}F_K^\top
    +
    \frac{\gamma}{1-\gamma}
    (B\Sigma B^\top+W).
\]
Substituting the optimal pair and the formula for \(\Sigma_\star(\tau)\) gives \eqref{eq:Sstar_affine_tau} and the two displayed Lyapunov equations. The norm bound follows from the triangle inequality, and the limit follows from continuity of the spectral norm.
\end{proof}

\begin{remark}[Interpretation of the rate bound]
\label{rem:rate_interpretation}
Corollary~\ref{cor:temperature_rate} isolates the temperature dependence for a fixed LQ problem and a fixed choice of coordinates. Its magnitude continues to reflect the usual conditioning of the control problem: discounting and the optimal closed-loop stability margin enter through \(\norm{\Scorr_\star}\), initial-state excitation enters through \(\mu\), and action-cost curvature enters through \(\lambda_R\). State and action rescalings change these quantities as well. The result therefore removes the additional exponential low-temperature factor while preserving the familiar instance dependence.
\end{remark}

\begin{remark}[Interpretation of the covariance estimate]
\label{rem:covariance_interpretation}
Set
\[
X_t:=\Sigma_\star^{-1/2}\Sigma_t\Sigma_\star^{-1/2}.
\]
The covariance quantity in~\eqref{eq:parameter_rate} is \(\tau\Phi(X_t)\), up to the fixed factor \(2(1-\gamma)\), and is exactly the covariance contribution to the entropy-regularized objective. Since \(\Phi(X)\) is locally equivalent to \(\norm{X-I}_F^2\) near \(I\), this yields exponential convergence of the relative covariance for every fixed \(\tau>0\). On a fixed spectral neighborhood of \(I\), an unweighted bound for \(\norm{X_t-I}_F^2\) has a prefactor proportional to \(1/\tau\).
\end{remark}

\begin{proof}[Proof sketch of Theorem~\ref{thm:main}]
The proof has two quantitative steps. First, the Bellman resolvent identity and the Gaussian log-Sobolev inequality convert Wasserstein dissipation into an accumulated soft Bellman residual. Second, the LQ performance-difference identity compares this residual with the global objective gap.

Let
\[
\Scorr_t:=\Scorr_{K_t,\Sigma_t},
\qquad
R_t(x):=V_t(x)-(\cT_*V_t)(x),
\]
and define
\[
\mathcal R_t
:=
\E_{x_0\sim\cD}
\left[
\bigl((I-\gamma\Pop_{\pi_t})^{-1}R_t\bigr)(x_0)
\right].
\]
Lemma~\ref{lem:resolvent} expresses objective dissipation as the discounted accumulation of relative Fisher information under the current policy. Lemma~\ref{lem:bellman_lsi} gives the sharp coefficient
\(\alpha_{K_t}=2\lambda_{\min}(M_t)/\tau\), its uniform lower bound
\(2\lambda_R/\tau\), and the identity
\(R_t=\tau\KL(\pi_t\|p_t)\). Hence the explicit factors of \(\tau\) cancel and
\[
-\frac{\dd}{\dd t}C(K_t,\Sigma_t)
\ge
4\lambda_R\mathcal R_t.
\]

The Gaussian KL formula yields
\[
\mathcal R_t
=
\Tr\!\left(
\Scorr_tE_t^\top M_t^{-1}E_t
\right)
+
B_t,
\qquad
B_t:=f_{K_t}(\Xi_t)-f_{K_t}(\Sigma_t)\ge0.
\]
The LQ performance-difference identity gives
\[
C(K_t,\Sigma_t)-C(K_\star,\Sigma_\star)
\le
\Tr\!\left(
\Scorr_\star E_t^\top M_t^{-1}E_t
\right)
+
B_t.
\]
Using
\[
\Scorr_t\succeq\Gamma_0\succeq\mu I,
\qquad
\Scorr_\star\preceq\norm{\Scorr_\star}I,
\]
and \(B_t\ge0\), we obtain
\[
\mathcal R_t
\ge
\frac{\mu}{\norm{\Scorr_\star}}
\bigl(C(K_t,\Sigma_t)-C(K_\star,\Sigma_\star)\bigr).
\]
Combining these estimates proves~\eqref{eq:pl_differential}, and Gronwall's inequality gives~\eqref{eq:main_rate}. Appendix~\ref{app:main} contains the complete proof.
\end{proof}

\section{Conclusion}
\label{sec:conclusion}

We studied entropy-regularized discounted LQ control in two stages. A Bellman verification argument first shows that a linear-Gaussian policy is optimal over the unrestricted admissible stationary policy class. The policy optimization problem can therefore be restricted to the gain and covariance without loss of optimality. We then stated the discounted-occupancy-weighted statewise 2-Wasserstein metric explicitly and derived WPGF as the negative gradient flow of the control objective.

At every admissible linear-Gaussian policy, the WPG direction is tangent to the same class and is represented exactly by the feedback-gain and covariance velocities. The resulting finite-dimensional ODE generates a linear-Gaussian solution of WPGF without projection and is globally well posed from every admissible initialization. This is the well-posedness result needed for the parameter-flow analysis; it does not rely on a uniqueness statement for every solution of the unrestricted nonlinear policy equation. The parameter equations also clarify the comparison with Fisher--Rao flow: the two gain update formulas coincide when the action covariance is the identity, whereas their covariance update laws are fundamentally different.

The objective gap and squared gain error converge exponentially. The sharp Gaussian log-Sobolev coefficient of the current Gibbs law is \(2\lambda_{\min}(M_K)/\tau\), and its uniform lower bound \(2\lambda_R/\tau\) cancels the explicit temperature factor in the Fisher-information dissipation. Consequently, for each fixed LQ problem and action metric, the proved exponent has a positive limit as \(\tau\downarrow0\) and contains no perturbative \(\exp(-c/\tau)\) factor. Its magnitude continues to reflect the usual LQ conditioning through discounting, closed-loop stability, initial-state excitation, action-cost scaling, and coordinate choice. Covariance convergence is controlled in the entropy-weighted relative error; an unweighted relative covariance estimate can have a prefactor proportional to \(1/\tau\). 

\appendix

\section{Comparison with Other Policy Geometries}
\label{app:geometry_comparison}

Table~\ref{tab:geometry_comparison} writes several LQ control policy-gradient directions in a common notation. Gain-only methods optimize \(K\) while fixing or omitting the exploration covariance. In the joint linear-Gaussian setting, regularized policy gradient (RPG) and the Fisher-Rao natural gradient for the full linear-Gaussian policy both precondition the covariance gradient on the left and right, but their gain components differ: only the Fisher-Rao gain direction is premultiplied by \(\Sigma\). The closed WPG parameter equations instead follow directly from the statewise Wasserstein direction of the conditional action distributions.

\begin{table}[H]
\centering
\caption{Descent directions for the infinite-horizon discounted (entropy-regularized) LQ control problem.}
\label{tab:geometry_comparison}
\begingroup
\scriptsize
\setlength{\tabcolsep}{2.7pt}
\renewcommand{\arraystretch}{1.16}
\begin{tabular}{
@{}
>{\raggedright\arraybackslash}p{0.135\linewidth}
>{\raggedright\arraybackslash}p{0.20\linewidth}
>{\centering\arraybackslash}p{0.14\linewidth}
>{\centering\arraybackslash}p{0.235\linewidth}
>{\raggedright\arraybackslash}p{0.21\linewidth}
@{}
}
\toprule
Method
&
Geometry / preconditioner
&
Gain direction
&
Covariance direction
&
Theory / distinction
\\
\midrule
\multicolumn{5}{@{}l}{\emph{Deterministic-policy methods for standard LQ control: only \(K\) is optimized}}
\\[0.15em]
PG \citep{fazel2018lqrpg}
&
Euclidean in \(K\)
&
\(-2E_K\Scorr\)
&
\(0\)
&
Gain-only LQ control baseline; covariance is not learned.
\\
\addlinespace[0.35em]
Gain NPG \citep{fazel2018lqrpg}
&
State-correlation preconditioning in \(K\)
&
\(-2E_K\)
&
\(0\)
&
Standard LQ control NPG.
\\
\midrule
\multicolumn{5}{@{}l}{\emph{Entropy-regularized LQ control over joint linear-Gaussian policies: both \(K\) and \(\Sigma\) are optimized}}
\\[0.15em]
Euclidean GF
&
Product Euclidean geometry on \((K,\Sigma)\)
&
\(-2E_K\Scorr\)
&
\(-\dfrac{1}{1-\gamma}G_\Sigma\)
&
Formal Euclidean gradient flow for the joint parameter \((K,\Sigma)\); it does not preserve the geometry of positive-definite covariance matrices.
\\
\addlinespace[0.35em]
RPG \citep{guo2026fast}
&
Separate gain and covariance preconditioners
&
\(-2E_K\)
&
\(-\dfrac{\beta}{1-\gamma}\Sigma G_\Sigma\Sigma\)
&
Discrete global linear rate under its stated conditions; not the full
linear-Gaussian Fisher-Rao NPG.
\\
\addlinespace[0.35em]
Full Fisher-Rao NPG 
&
Fisher-Rao geometry of the complete conditional linear-Gaussian family
&
\(-2\Sigma E_K\)
&
\(-2\Sigma G_\Sigma\Sigma\)
&
Natural gradient for the full conditional linear-Gaussian policy; its gain component differs from RPG and WPGF.
\\
\addlinespace[0.35em]
\textbf{WPGF (ours)}
&
Conditional action-space \(W_2\); on the linear-Gaussian family, the induced metric is Euclidean for the mean and the \(W_2\) metric for the covariance
&
\(-2E_K\)
&
\(-2(G_\Sigma\Sigma+\Sigma G_\Sigma)\)
&
\textbf{Global well-posedness and an explicit global exponential objective
rate.}
\\
\bottomrule
\end{tabular}
\parbox{\linewidth}{
\vspace{3pt}\scriptsize
\emph{Notation and conventions.}
\(\Scorr:=\Scorr_{K,\Sigma}\) and
\(G_\Sigma:=M_K-(\tau/2)\Sigma^{-1}\). All rows use the cost-minimization sign
convention. PG and gain NPG freeze \(\Sigma\). For RPG, the displayed vector
field is the infinitesimal direction of the Guo-Li-Xu iteration with
\(\eta_1=h\), \(\eta_2=\beta h\), and \(h\downarrow0\); thus \(\beta>0\)
records the covariance-to-gain step-size ratio, while its stated guarantee is
for the discrete iterates. The Fisher-Rao row uses the discounted Fisher metric.
}
\endgroup
\end{table}

These directions are not interchangeable. Gain-only PG and NPG do not define a joint flow on \((K,\Sigma)\); RPG is a discrete method with separately chosen gain and covariance step-size scalings; and Euclidean and full Fisher-Rao flows use different joint policy geometries. The main theorem therefore analyzes the exact gain-covariance dynamics induced by WPG, including global well-posedness and the explicit global convergence.

\paragraph{Construction of Figure~\ref{fig:ambient_dimension}.}
Let the task-relevant action subspace have dimension \(r=8\). In an eigenbasis of \(M_\star\), set the optimal covariance eigenvalues to
\[
    \xi_i^\star=10^{-(i-1)/(r-1)},
    \qquad i=1,\ldots,r,
\]
and set every redundant-coordinate eigenvalue to \(10^{-6}\). Since \(\xi_i^\star=\tau/(2m_i)\), these values determine the corresponding eigenvalues \(m_i\) of \(M_\star\). With the rescaled time \(s=\tau t\), the local WPG gain error in mode \(i\) is
\[
    e_i^{\mathrm W}(s)=\exp(-s/\xi_i^\star).
\]
For the Fisher-Rao flow, write \(q_i=\sigma_i/\xi_i^\star\). To first order in the gain error, the modal covariance and gain equations are
\[
    \frac{\dd q_i}{\dd s}=q_i(1-q_i),
    \qquad
    \frac{\dd e_i^{\mathrm{FR}}}{\dd s}=-q_i e_i^{\mathrm{FR}},
\]
so, from \(q_i(0)=q_{i,0}\) and \(e_i^{\mathrm{FR}}(0)=1\),
\[
    e_i^{\mathrm{FR}}(s)
    =
    \frac{1}{1+q_{i,0}(e^s-1)}.
\]
Figure~\ref{fig:ambient_dimension} plots the worst error over the eight relevant modes. For each ambient dimension \(m\), the common isotropic initialization is \(\Sigma_0=c_mI\), where \(mc_m=10\Tr(\Sigma_\star)\); the trace includes the \(m-r\) redundant-coordinate eigenvalues. Hence \(q_{i,0}=c_m/\xi_i^\star\) decreases as redundant coordinates are added, which slows Fisher-Rao but leaves WPG unchanged.

\section{Linear-Quadratic Identities for Linear-Gaussian Policies}\label{app:identities}

This appendix proves the LQ identities used in the convergence analysis.

\subsection{Quadratic value representation}

We begin with the quadratic representation of the value and soft action-value functions for a fixed linear-Gaussian policy.
\begin{proof}[Proof of Lemma~\ref{lem:value_representation}]
Fix an admissible pair \((K,\Sigma)\). Under \(\pi_{K,\Sigma}\),
\[
u_t=-Kx_t+\xi_t,
\qquad
\xi_t\sim\cN(0,\Sigma),
\]
and hence
\[
x_{t+1}=F_Kx_t+\eta_t,
\qquad
F_K:=A-BK,
\qquad
\Cov(\eta_t)=\Omega_\Sigma:=B\Sigma B^\top+W.
\]

Since \(\rho(\sqrt{\gamma}F_K)<1\), the series
\[
P_K
=
\sum_{t=0}^{\infty}
\gamma^t(F_K^t)^\top L_KF_K^t,
\qquad
L_K:=Q+K^\top RK,
\]
converges. Shifting the summation index gives
\[
P_K=L_K+\gamma F_K^\top P_KF_K.
\]
Conversely, iterating this Lyapunov equation shows that any solution must
coincide with the preceding series. Thus \(P_K\) is its unique
positive-semidefinite solution.

We next determine the constant term of the value function. Define
\[
r_\Sigma
:=
\Tr(R\Sigma)
-\frac{\tau}{2}
\left(
m+\log\bigl((2\pi)^m\det\Sigma\bigr)
\right).
\]
The expected one-step regularized cost under \(\pi_{K,\Sigma}\) is
\[
\E\!\left[
c(x,u)+\tau\log\pi_{K,\Sigma}(u\mid x)
\,\middle|\,x
\right]
=
x^\top L_Kx+r_\Sigma.
\]

Consider the quadratic function
\[
\widetilde V(x):=x^\top P_Kx+q.
\]
Using \(\E[\eta_t]=0\) and
\(\Cov(\eta_t)=\Omega_\Sigma\), we have
\[
\E[\widetilde V(x_{t+1})\mid x_t=x]
=
x^\top F_K^\top P_KF_Kx
+
\Tr(P_K\Omega_\Sigma)
+
q.
\]
Therefore
\[
\begin{aligned}
&\E\!\left[
c(x,u)+\tau\log\pi_{K,\Sigma}(u\mid x)
+\gamma\widetilde V(x_{t+1})
\,\middle|\,x_t=x
\right]
\\
&\quad=
x^\top
\left(
L_K+\gamma F_K^\top P_KF_K
\right)x
+
r_\Sigma
+
\gamma\Tr(P_K\Omega_\Sigma)
+
\gamma q.
\end{aligned}
\]
By the Lyapunov equation, the quadratic coefficient equals \(P_K\).
Thus \(\widetilde V\) satisfies the policy Bellman equation provided
\[
(1-\gamma)q
=
r_\Sigma+\gamma\Tr(P_K\Omega_\Sigma).
\]
Since
\[
\begin{aligned}
\Tr(R\Sigma)
+\gamma\Tr(P_KB\Sigma B^\top)
&=
\Tr\!\left(
\Sigma(R+\gamma B^\top P_KB)
\right) \\
&=
\Tr(\Sigma M_K),
\end{aligned}
\]
where
\[
M_K:=R+\gamma B^\top P_KB,
\]
the required constant is
\[
q=q_{K,\Sigma}
=
\frac{1}{1-\gamma}
\left(
\Tr(\Sigma M_K)
-\frac{\tau}{2}
\left(
m+\log\bigl((2\pi)^m\det\Sigma\bigr)
\right)
+\gamma\Tr(WP_K)
\right).
\]
Iterating the policy Bellman identity over a finite horizon and using
\[
\gamma^T
\E\!\left[
1+\|x_T\|^2
\,\middle|\,x_0=x
\right]
\longrightarrow0
\]
under discounted stability shows that
\[
V^{\pi_{K,\Sigma}}(x)
=
x^\top P_Kx+q_{K,\Sigma}.
\]

We now compute the corresponding soft state-action value. By definition,
\[
Q^{\pi_{K,\Sigma}}(x,u)
=
c(x,u)
+
\gamma
\E\!\left[
V^{\pi_{K,\Sigma}}(Ax+Bu+w)
\right].
\]
Substituting the quadratic value representation gives
\[
\begin{aligned}
Q^{\pi_{K,\Sigma}}(x,u)
&=
x^\top Qx+u^\top Ru
+
\gamma(Ax+Bu)^\top P_K(Ax+Bu)
\\
&\quad
+\gamma\Tr(WP_K)
+\gamma q_{K,\Sigma}.
\end{aligned}
\]
Hence, with
\[
N_K:=\gamma B^\top P_KA,
\]
we obtain
\[
Q^{\pi_{K,\Sigma}}(x,u)
=
u^\top M_Ku
+
2u^\top N_Kx
+
\psi_{K,\Sigma}(x),
\]
where
\[
\psi_{K,\Sigma}(x)
=
x^\top
\left(
Q+\gamma A^\top P_KA
\right)x
+
\gamma\Tr(WP_K)
+
\gamma q_{K,\Sigma}.
\]

Finally, taking expectation over \(x_0\sim\cD\) yields
\[
C(K,\Sigma)
=
\Tr(\Gamma_0P_K)+q_{K,\Sigma}.
\]
\end{proof}

\subsection{Proof of the optimality of a linear-Gaussian policy}

\begin{proof}[Proof of Proposition~\ref{prop:optimizer}]
Apply the standard discrete-time Riccati theorem to the scaled system
\[
\widetilde A:=\sqrt{\gamma}A,
\qquad
\widetilde B:=\sqrt{\gamma}B.
\]
Under Assumption~\ref{ass:standard}(i)-(ii), the discounted algebraic Riccati
equation
\[
P
=
Q+\gamma A^\top PA
-
\gamma^2 A^\top PB
\left(R+\gamma B^\top PB\right)^{-1}
B^\top PA
\]
has a unique stabilizing solution \(P_\star\succeq0\).

Define
\[
M_\star
:=
R+\gamma B^\top P_\star B,
\qquad
N_\star
:=
\gamma B^\top P_\star A,
\]
and
\[
K_\star
:=
(M_\star)^{-1}N_\star,
\qquad
\Sigma_\star
:=
\frac{\tau}{2}(M_\star)^{-1}.
\]
Since \(M_\star\succeq R\succ0\), we have \(\Sigma_\star\succ0\).
Moreover, the stabilizing property of \(P_\star\) gives
\[
\rho\!\left(
\sqrt{\gamma}(A-BK_\star)
\right)<1.
\]

Set
\[
q_\star
:=
\frac{1}{1-\gamma}
\left(
\gamma\Tr(P_\star W)
+\frac{\tau}{2}\log\det M_\star
-\frac{\tau m}{2}\log(\pi\tau)
\right),
\]
and define the quadratic candidate
\[
\overline V(x)
:=
x^\top P_\star x+q_\star.
\]

Fix \(x\in\R^n\). Define the density
\[
g_x(u)
:=
\frac{\det(M_\star)^{1/2}}{(\pi\tau)^{m/2}}
\exp\left(
-\frac{1}{\tau}
(u+K_\star x)^\top
M_\star
(u+K_\star x)
\right).
\]
This is the density of
\[
\pi_\star(\cdot\mid x)
=
\cN(-K_\star x,\Sigma_\star).
\]

Let \(\rho(\cdot\mid x)\) be any probability density for which the
following expression is finite. Since
\(M_\star K_\star=N_\star\), completing the square gives
\[
u^\top M_\star u+2u^\top N_\star x
=
(u+K_\star x)^\top M_\star(u+K_\star x)
-
x^\top(N_\star)^\top(M_\star)^{-1}N_\star x.
\]
By the definition of \(g_x\),
\[
\begin{aligned}
&u^\top M_\star u
+2u^\top N_\star x
+\tau\log\rho(u\mid x)
\\
&=
-x^\top(N_\star)^\top(M_\star)^{-1}N_\star x
+\frac{\tau}{2}\log\det M_\star
-\frac{\tau m}{2}\log(\pi\tau)
+\tau\log\frac{\rho(u\mid x)}{g_x(u)}.
\end{aligned}
\]
Integrating with respect to \(\rho(\cdot\mid x)\) yields
\[
\begin{aligned}
&\int_{\R^m}
\left(
u^\top M_\star u
+2u^\top N_\star x
+\tau\log\rho(u\mid x)
\right)
\rho(du\mid x)
\\
&=
-x^\top(N_\star)^\top(M_\star)^{-1}N_\star x
+\frac{\tau}{2}\log\det M_\star
-\frac{\tau m}{2}\log(\pi\tau)
\\
&\quad
+\tau\KL\left(
\rho(\cdot\mid x)
\,\middle\|\,
\pi_\star(\cdot\mid x)
\right).
\end{aligned}
\]

On the other hand,
\[
\begin{aligned}
&c(x,u)
+\gamma\E[\overline V(Ax+Bu+w)]
\\
&=
x^\top
\left(
Q+\gamma A^\top P_\star A
\right)x
+\gamma\Tr(P_\star W)
+\gamma q_\star
\\
&\quad
+u^\top M_\star u
+2u^\top N_\star x.
\end{aligned}
\]
Combining the preceding two identities, we obtain
\[
\begin{aligned}
&\int_{\R^m}
\Bigl(
c(x,u)
+\gamma\E[\overline V(Ax+Bu+w)]
+\tau\log\rho(u\mid x)
\Bigr)
\rho(du\mid x)
\\
&=
x^\top
\left(
Q+\gamma A^\top P_\star A
-(N_\star)^\top(M_\star)^{-1}N_\star
\right)x
\\
&\quad
+\gamma\Tr(P_\star W)
+\gamma q_\star
+\frac{\tau}{2}\log\det M_\star
-\frac{\tau m}{2}\log(\pi\tau)
\\
&\quad
+\tau\KL\left(
\rho(\cdot\mid x)
\,\middle\|\,
\pi_\star(\cdot\mid x)
\right).
\end{aligned}
\]
The Riccati equation implies
\[
Q+\gamma A^\top P_\star A
-(N_\star)^\top(M_\star)^{-1}N_\star
=
P_\star,
\]
while the definition of \(q_\star\) implies
\[
\gamma\Tr(P_\star W)
+\gamma q_\star
+\frac{\tau}{2}\log\det M_\star
-\frac{\tau m}{2}\log(\pi\tau)
=
q_\star.
\]
Therefore
\[
\begin{aligned}
&\int_{\R^m}
\Bigl(
c(x,u)
+\gamma\E[\overline V(Ax+Bu+w)]
+\tau\log\rho(u\mid x)
\Bigr)
\rho(du\mid x)
\\
&=
\overline V(x)
+
\tau\KL\left(
\rho(\cdot\mid x)
\,\middle\|\,
\pi_\star(\cdot\mid x)
\right).
\end{aligned}
\tag{*}
\]
Since relative entropy is nonnegative and vanishes only when its two
arguments agree almost everywhere, taking the infimum over all action
densities gives
\[
\cT_*\overline V
=
\cT_{\pi_\star}\overline V
=
\overline V.
\]
Moreover, \(\pi_\star(\cdot\mid x)\) is the unique minimizer of the
soft Bellman expression for every \(x\).

Let \(\pi\in\Pi_{\mathrm{adm}}\). Applying \((*)\) with
\(\rho(\cdot\mid x)=\pi(\cdot\mid x)\) along a trajectory generated by
\(\pi\) gives
\[
\begin{aligned}
&\E_x^\pi
\left[
c(x_t,u_t)
+\tau\log\pi(u_t\mid x_t)
+\gamma\overline V(x_{t+1})
\,\middle|\,
x_t
\right]
\\
&=
\overline V(x_t)
+
\tau\KL\left(
\pi(\cdot\mid x_t)
\,\middle\|\,
\pi_\star(\cdot\mid x_t)
\right).
\end{aligned}
\]
Multiplying by \(\gamma^t\), taking expectations, and summing from
\(t=0\) to \(T-1\), the value terms telescope and yield
\[
\begin{aligned}
&\E_x^\pi
\left[
\sum_{t=0}^{T-1}
\gamma^t
\left(
c(x_t,u_t)
+\tau\log\pi(u_t\mid x_t)
\right)
\right]
\\
&=
\overline V(x)
-
\gamma^T\E_x^\pi[\overline V(x_T)]
\\
&\quad
+\tau
\E_x^\pi
\left[
\sum_{t=0}^{T-1}
\gamma^t
\KL\left(
\pi(\cdot\mid x_t)
\,\middle\|\,
\pi_\star(\cdot\mid x_t)
\right)
\right].
\end{aligned}
\]
By the definition of \(\Pi_{\mathrm{adm}}\) and the quadratic growth of
\(\overline V\),
\[
\gamma^T\E_x^\pi[|\overline V(x_T)|]\longrightarrow0.
\]
Letting \(T\to\infty\), we obtain
\[
\begin{aligned}
V^\pi(x)
&=
\overline V(x)
\\
&\quad
+\tau
\E_x^\pi
\left[
\sum_{t=0}^{\infty}
\gamma^t
\KL\left(
\pi(\cdot\mid x_t)
\,\middle\|\,
\pi_\star(\cdot\mid x_t)
\right)
\right]
\\
&\ge
\overline V(x).
\end{aligned}
\tag{**}
\]
For \(\pi=\pi_\star\), every relative-entropy term vanishes.
Furthermore, discounted stability implies the required transversality,
and hence
\[
V^{\pi_\star}(x)=\overline V(x).
\]
Combining this equality with \((**)\) gives~\eqref{eq:gaussian_sufficiency}. If another policy is optimal for every initial state, then the time-zero relative-entropy term vanishes for every \(x\), so its conditional density agrees almost everywhere with \(\pi_\star(\cdot\mid x)\). This proves uniqueness. Because \(\pi_\star\) is admissible and linear-Gaussian, equality~\eqref{eq:wlog_gaussian} follows.

It remains to express the result in the policy-evaluation notation.
Using \(K_\star=(M_\star)^{-1}N_\star\), the Riccati equation can be
rewritten as
\[
P_\star
=
Q+(K_\star)^\top RK_\star
+
\gamma
(A-BK_\star)^\top
P_\star
(A-BK_\star).
\]
Since \(K_\star\) is stabilizing, this discounted Lyapunov equation has
a unique positive-semidefinite solution. Therefore
\[
P_{K_\star}=P_\star.
\]
Consequently,
\[
M_{K_\star}=M_\star,
\qquad
N_{K_\star}=N_\star,
\]
and hence
\[
K_\star
=
M_{K_\star}^{-1}N_{K_\star},
\qquad
E_{K_\star}
=
M_{K_\star}K_\star-N_{K_\star}
=
0,
\]
as well as
\[
\Sigma_\star
=
\frac{\tau}{2}M_{K_\star}^{-1}.
\]
\end{proof}

\subsection{Soft Bellman residual and Gaussian log-Sobolev inequality}

\begin{lemma}[Gaussian KL formula]\label{lem:gauss_kl}
Let
\[
\nu_1=\cN(m_1,\Sigma_1),
\qquad
\nu_2=\cN(m_2,\Sigma_2).
\]
Then
\[
\KL(\nu_1\|\nu_2)
=
\frac12
\left[
(m_1-m_2)^\top\Sigma_2^{-1}(m_1-m_2)
+
\Tr(\Sigma_2^{-1}\Sigma_1)
-m
-\log\det(\Sigma_2^{-1}\Sigma_1)
\right].
\]
\end{lemma}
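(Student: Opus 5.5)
The plan is a direct computation from the definition \(\KL(\nu_1\|\nu_2)=\int\log(\dd\nu_1/\dd\nu_2)\,\dd\nu_1\), using only the explicit Gaussian densities and the first two moments of \(\nu_1\). I will assume throughout that \(\Sigma_1,\Sigma_2\succ0\), so both laws have positive densities on \(\R^m\) and \(\nu_1\ll\nu_2\).

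First I write the log-density ratio explicitly:
\[
\log\frac{\dd\nu_1}{\dd\nu_2}(u)
=
\frac12\log\frac{\det\Sigma_2}{\det\Sigma_1}
-\frac12(u-m_1)^\top\Sigma_1^{-1}(u-m_1)
+\frac12(u-m_2)^\top\Sigma_2^{-1}(u-m_2).
\]
The normalizing factors \((2\pi)^{-m/2}\) cancel. The integrand is quadratic in \(u\), and \(\nu_1\) has finite second moments, so it is \(\nu_1\)-integrable and the integral may be evaluated term by term.

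Next I take the expectation under \(u\sim\nu_1\), using the identity \(\E[(u-a)^\top S(u-a)]=\Tr(S\Sigma_1)+(m_1-a)^\top S(m_1-a)\).
\begin{itemize}
\item With \(a=m_1\) and \(S=\Sigma_1^{-1}\), the second term contributes \(-\tfrac12\Tr(I)=-m/2\).
\item With \(a=m_2\) and \(S=\Sigma_2^{-1}\), the third term contributes \(\tfrac12\Tr(\Sigma_2^{-1}\Sigma_1)+\tfrac12(m_1-m_2)^\top\Sigma_2^{-1}(m_1-m_2)\).
\end{itemize}
The identity itself follows by expanding \(u-a=(u-m_1)+(m_1-a)\): the cross term has zero mean, and the quadratic term's expectation is a trace by cyclicity.

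Finally I rewrite the log-determinant term as \(\tfrac12\log\det\Sigma_2-\tfrac12\log\det\Sigma_1=-\tfrac12\log\det(\Sigma_2^{-1}\Sigma_1)\), using multiplicativity of the determinant. Collecting the three contributions gives the stated formula. There is no genuine obstacle here; the only points needing care are the integrability justification and keeping the sign of the log-determinant term straight.
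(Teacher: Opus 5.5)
Your proposal is correct and follows essentially the same route as the paper: write the explicit log-density ratio, take its expectation under \(\nu_1\) using the two quadratic-moment identities, and rewrite the log-determinant term as \(-\tfrac12\log\det(\Sigma_2^{-1}\Sigma_1)\). Your remarks on integrability and on deriving the moment identity add a little detail that the paper leaves implicit.
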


\begin{proof}
Using the Gaussian density formula,
\[
\log\frac{\dd\nu_1}{\dd\nu_2}(u)
=
\frac12\log\frac{\det\Sigma_2}{\det\Sigma_1}
+
\frac12(u-m_2)^\top\Sigma_2^{-1}(u-m_2)
-
\frac12(u-m_1)^\top\Sigma_1^{-1}(u-m_1).
\]
Taking expectation under \(\nu_1\), and using
\[
\E_{\nu_1}\!\left[
(u-m_1)^\top\Sigma_1^{-1}(u-m_1)
\right]
=m
\]
and
\[
\E_{\nu_1}\!\left[
(u-m_2)^\top\Sigma_2^{-1}(u-m_2)
\right]
=
\Tr(\Sigma_2^{-1}\Sigma_1)
+
(m_1-m_2)^\top\Sigma_2^{-1}(m_1-m_2),
\]
gives the result.
\end{proof}

\begin{proof}[Proof of Lemma~\ref{lem:bellman_lsi}]
\proofpart{Gibbs policy and Bellman residual}
By Lemma~\ref{lem:value_representation}, we have
\[
V_{K,\Sigma}(x)=x^\top P_Kx+q_{K,\Sigma}.
\]
For the fixed policy \((K,\Sigma)\), the state-action function
\[
Q_{K,\Sigma}(x,u):=c(x,u)+\gamma\E\!\left[V_{K,\Sigma}(Ax+Bu+w)\right]
\]
is quadratic in \(u\). More precisely, expanding \(c(x,u)=x^\top Qx+u^\top Ru\) and the term \(\gamma\E[(Ax+Bu+w)^\top P_K(Ax+Bu+w)]\), and collecting the \(u\)-quadratic and \(u\)-linear coefficients, gives
\[
Q_{K,\Sigma}(x,u)=u^\top M_Ku+2u^\top N_Kx+\psi_{K,\Sigma}(x),
\]
where \(\psi_{K,\Sigma}\) is independent of \(u\), \(M_K=R+\gamma B^\top P_KB\), and \(N_K=\gamma B^\top P_KA\).

It follows that
\[
\exp\!\left(-\frac{1}{\tau}Q_{K,\Sigma}(x,u)\right)
\propto
\exp\!\left(-\frac{1}{\tau}\left(u^\top M_Ku+2u^\top N_Kx\right)\right).
\]
Completing the square yields
\[
p_K(\cdot\mid x)=\cN\!\left(-M_K^{-1}N_Kx,\Xi_K\right),
\qquad
\Xi_K:=\frac{\tau}{2}M_K^{-1}.
\]

For any density \(\rho\), the Gibbs variational identity gives
\[
\int\left(Q_{K,\Sigma}(x,u)+\tau\log\rho(u)\right)\rho(\dd u)
=
(\cT_*V_{K,\Sigma})(x)
+
\tau\KL\!\left(\rho\,\middle\|\,p_K(\cdot\mid x)\right).
\]
Indeed, if \(Z_{K,\Sigma}(x):=\int\exp(-Q_{K,\Sigma}(x,u)/\tau)\,\dd u\), then
\[
(\cT_*V_{K,\Sigma})(x)=-\tau\log Z_{K,\Sigma}(x),
\]
and the preceding identity follows from
\[
\tau\KL\!\left(\rho\,\middle\|\,p_K(\cdot\mid x)\right)
=
\int\left(Q_{K,\Sigma}(x,u)+\tau\log\rho(u)\right)\rho(\dd u)
+
\tau\log Z_{K,\Sigma}(x).
\]
Taking \(\rho=\pi_{K,\Sigma}(\cdot\mid x)\) and using the Bellman identity
\[
V_{K,\Sigma}=\cT_{\pi_{K,\Sigma}}V_{K,\Sigma},
\]
we obtain
\[
R_{K,\Sigma}(x)
:=
V_{K,\Sigma}(x)-(\cT_*V_{K,\Sigma})(x)
=
\tau\KL\!\left(\pi_{K,\Sigma}(\cdot\mid x)\,\middle\|\,p_K(\cdot\mid x)\right).
\]

Applying the Gaussian KL formula in Lemma~\ref{lem:gauss_kl} to
\[
\pi_{K,\Sigma}(\cdot\mid x)=\cN(-Kx,\Sigma)
\quad\text{and}\quad
p_K(\cdot\mid x)=\cN(-M_K^{-1}N_Kx,\Xi_K),
\]
we get
\[
R_{K,\Sigma}(x)
=
x^\top (K-M_K^{-1}N_K)^\top M_K(K-M_K^{-1}N_K)x
+
\frac{\tau}{2}
\left[
\Tr(\Xi_K^{-1}\Sigma)-m-\log\det(\Xi_K^{-1}\Sigma)
\right].
\]
Since \(E_K=M_KK-N_K\),
\[
(K-M_K^{-1}N_K)^\top M_K(K-M_K^{-1}N_K)=E_K^\top M_K^{-1}E_K.
\]
This gives the explicit formula~\eqref{eq:bellman_kl}.

\proofpart{Gaussian log-Sobolev inequality}
The Gibbs conditional action law
\[
p_K(\cdot\mid x)=\cN(-M_K^{-1}N_Kx,\Xi_K)
\]
has covariance
\[
\Xi_K=\frac{\tau}{2}M_K^{-1}.
\]
A nondegenerate Gaussian measure \(\mathsf G_\Xi\) with covariance \(\Xi\) satisfies
\[
\mathcal I(\nu\|\mathsf G_\Xi)
\ge
2\lambda_{\min}(\Xi^{-1})\KL(\nu\|\mathsf G_\Xi),
\]
and the coefficient \(\lambda_{\min}(\Xi^{-1})\) is sharp; see, for example,
\citet{bakry2014analysis}. Hence, for the current Gibbs law,
\[
    \alpha_K
    :=
    \lambda_{\min}(\Xi_K^{-1})
    =
    \frac{2\lambda_{\min}(M_K)}{\tau}.
\]
Equivalently, if the LSI is written as
\[
    \KL(\nu\|p_K)
    \le
    \frac{C_{\mathrm{LSI}}(p_K)}{2}\mathcal I(\nu\|p_K),
\]
then its sharp constant is
\[
    C_{\mathrm{LSI}}(p_K)
    =
    \frac{1}{\alpha_K}
    =
    \frac{\tau}{2\lambda_{\min}(M_K)}.
\]
Since
\[
M_K=R+\gamma B^\top P_KB\succeq R\succeq\lambda_R I,
\]
we also have the uniform lower bound
\[
    \alpha_K
    \ge
    \underline\alpha
    :=
    \frac{2\lambda_R}{\tau}.
\]
Therefore
\[
\mathcal I(\nu\|p_K(\cdot\mid x))
\ge
2\alpha_K\KL(\nu\|p_K(\cdot\mid x))
\ge
2\underline\alpha\KL(\nu\|p_K(\cdot\mid x)).
\]
Translations do not change either Gaussian coefficient, so the uniform estimate is independent of \(x\) and of the admissible gain \(K\).
\end{proof}

\subsection{Performance-difference identity and gradients}

\begin{proof}[Proof of Lemma~\ref{lem:perf}]
Let
\[
\Delta K:=K'-K.
\]
Let \((x_t,u_t)_{t\ge0}\) be the trajectory generated by the policy \((K',\Sigma')\). Since \(V_{K,\Sigma}\) has quadratic growth and \((K',\Sigma')\) is admissible, Lemma~\ref{lem:v2_resolvent} implies
\[
\gamma^T\E^{K',\Sigma'}\!\left[V_{K,\Sigma}(x_T)\right]\to0
\qquad
\text{as }T\to\infty.
\]
Using \(C(K,\Sigma)=\E_{x_0\sim \cD}[V_{K,\Sigma}(x_0)]\), the usual telescoping argument gives
\begin{align*}
C(K',\Sigma')-C(K,\Sigma)
=
\E^{K',\Sigma'}\sum_{t=0}^{\infty}\gamma^t\Big(
&c(x_t,u_t)+\tau\log\pi_{K',\Sigma'}(u_t\mid x_t) \\
&+\gamma V_{K,\Sigma}(x_{t+1})-V_{K,\Sigma}(x_t)
\Big).
\end{align*}

We next compute the conditional one-step contribution. Conditioning on \(x_t=x\), let
\[
u\sim\cN(-K'x,\Sigma'),
\qquad
x_+:=Ax+Bu+w.
\]
Lemma~\ref{lem:value_representation} gives
\[
V_{K,\Sigma}(x)=x^\top P_Kx+q_{K,\Sigma},
\qquad
M_K=R+\gamma B^\top P_KB,
\qquad
    N_K=\gamma B^\top P_KA.
\]
A direct computation then gives
\begin{align*}
&\E\!\left[
c(x,u)+\tau\log\pi_{K',\Sigma'}(u\mid x)+\gamma V_{K,\Sigma}(x_+)-V_{K,\Sigma}(x)
\,\middle|\,x_t=x
\right] \\
&\qquad =
x^\top\!\left(K'^\top M_KK'-2K'^\top N_K-K^\top M_KK+2K^\top N_K\right)x \\
&\qquad\quad
+\Tr(M_K\Sigma')-\frac{\tau}{2}\log\det\Sigma'
-\Tr(M_K\Sigma)+\frac{\tau}{2}\log\det\Sigma.
\end{align*}
As a quadratic form in \(x\), the matrix coefficient agrees with
\[
\Delta K^\top M_K\Delta K+2\Delta K^\top E_K,
\qquad
E_K=M_KK-N_K.
\]
Indeed, the two matrix representatives can differ only by a skew-symmetric term, which vanishes inside \(x^\top(\cdot)x\). The constant part is
\[
(1-\gamma)\bigl(f_K(\Sigma)-f_K(\Sigma')\bigr).
\]

Substituting the one-step expansion into the telescoping identity and summing the quadratic term along the trajectory of \((K',\Sigma')\), we obtain
\begin{align*}
&\E^{K',\Sigma'}\sum_{t=0}^{\infty}\gamma^t
x_t^\top\left(\Delta K^\top M_K\Delta K+2\Delta K^\top E_K\right)x_t \\
&\qquad =
\Tr\!\left(
\Scorr_{K',\Sigma'}
\left(\Delta K^\top M_K\Delta K+2\Delta K^\top E_K\right)
\right).
\end{align*}
The constant term contributes
\[
\sum_{t=0}^{\infty}\gamma^t(1-\gamma)\bigl(f_K(\Sigma)-f_K(\Sigma')\bigr)
=
f_K(\Sigma)-f_K(\Sigma').
\]
Therefore
\[
C(K',\Sigma')-C(K,\Sigma)
=
\Tr\!\left(
\Scorr_{K',\Sigma'}
\left(\Delta K^\top M_K\Delta K+2\Delta K^\top E_K\right)
\right)
+
f_K(\Sigma)-f_K(\Sigma').
\]
This proves~\eqref{eq:perf_diff}.

It remains to identify the gradients. We differentiate~\eqref{eq:perf_diff} at
\[
(K',\Sigma')=(K,\Sigma).
\]
First fix \(\Sigma'=\Sigma\) and set
\[
K'=K+\varepsilon H.
\]
Then \(\Delta K=\varepsilon H\). Since the bracketed term in~\eqref{eq:perf_diff} vanishes at \(\varepsilon=0\), the first variation of \(\Scorr_{K+\varepsilon H,\Sigma}\) does not contribute. Thus
\[
C(K+\varepsilon H,\Sigma)-C(K,\Sigma)
=
2\varepsilon\Tr\!\left(\Scorr_{K,\Sigma}H^\top E_K\right)
+
o(\varepsilon).
\]
With respect to the Frobenius inner product, this gives
\[
\nabla_K C(K,\Sigma)=2E_K\Scorr_{K,\Sigma}.
\]

Next fix \(K'=K\) and set
\[
\Sigma'=\Sigma+\varepsilon U,
\qquad
U=U^\top.
\]
The quadratic term in~\eqref{eq:perf_diff} vanishes identically, so
\[
C(K,\Sigma+\varepsilon U)-C(K,\Sigma)
=
f_K(\Sigma)-f_K(\Sigma+\varepsilon U).
\]
Using
\[
f_K(\Sigma)
=
\frac{\tau}{2(1-\gamma)}\log\det\Sigma
-
\frac{1}{1-\gamma}\Tr(M_K\Sigma),
\]
we obtain
\[
C(K,\Sigma+\varepsilon U)-C(K,\Sigma)
=
\frac{\varepsilon}{1-\gamma}
\Tr\!\left(
\left(M_K-\frac{\tau}{2}\Sigma^{-1}\right)U
\right)
+
o(\varepsilon).
\]
Hence
\[
\nabla_\Sigma C(K,\Sigma)
=
\frac{1}{1-\gamma}
\left(M_K-\frac{\tau}{2}\Sigma^{-1}\right).
\]
This proves~\eqref{eq:gradients}.
\end{proof}

\section{Closure on Linear-Gaussian Policies and Value Dissipation}\label{app:flow}

This appendix verifies the policy first variation along compactly supported action transports at a linear-Gaussian policy and proves the closure and dissipation identities.

\subsection{Policy-space metric and Wasserstein gradient}

\begin{lemma}[First variation at a linear-Gaussian policy]
\label{lem:conditional_first_variation}
Fix \((K,\Sigma)\in\cA_{\mathrm{adm}}\) and write
\(\pi=\pi_{K,\Sigma}\). Let
\(\varphi\in C_c^\infty(\R^n\times\R^m)\), set
\(v=\nabla_u\varphi\), and define
\[
    T_{\varepsilon,x}(u)
    :=u+\varepsilon v(x,u),
    \qquad
    \pi^\varepsilon(\cdot\mid x)
    :=(T_{\varepsilon,x})_\#\pi(\cdot\mid x).
\]
There exists \(\varepsilon_0>0\) such that, for every
\(\lvert\varepsilon\rvert<\varepsilon_0\), the map
\(T_{\varepsilon,x}\) is a smooth bijection of \(\R^m\) for every \(x\),
the policy \(\pi^\varepsilon\) has a positive smooth density and belongs to
\(\Pi_{\mathrm{adm}}\), and the map
\(\varepsilon\mapsto C(\pi^\varepsilon)\) is differentiable at zero. Its
derivative is
\begin{equation}
\label{eq:objective_first_variation_expansion}
\begin{aligned}
\left.\frac{\dd}{\dd\varepsilon}C(\pi^\varepsilon)\right|_{\varepsilon=0}
&=
\frac{1}{1-\gamma}
\int_{\R^n}d_{\mathcal D}^{\pi}(\dd x)
\int_{\R^m}
\bigl(Q^\pi(x,u)+\tau\log\pi(u\mid x)\bigr)
\xi_v(u\mid x)\,\dd u                                      \\
&=
\frac{1}{1-\gamma}
\int_{\R^n}d_{\mathcal D}^{\pi}(\dd x)
\int_{\R^m}
\left\langle
\nabla_u\bigl(Q^\pi(x,u)+\tau\log\pi(u\mid x)\bigr),
 v(x,u)
\right\rangle
\pi(\dd u\mid x),
\end{aligned}
\end{equation}
where
\[
    \xi_v(u\mid x)
    :=
    \left.\frac{\dd}{\dd\varepsilon}
    \pi^\varepsilon(u\mid x)\right|_{\varepsilon=0}
    =
    -\nabla_u\cdot\bigl(\pi(u\mid x)v(x,u)\bigr).
\]
In particular, the differentiation under the discounted infinite sum, the
Fubini interchange, and the integration by parts in
\eqref{eq:objective_first_variation_expansion} are valid under the standing
assumptions.
\end{lemma}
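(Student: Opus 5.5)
We first establish the geometric properties of the transport maps. Because \(\varphi\in C_c^\infty\), the field \(v=\nabla_u\varphi\) is bounded, compactly supported in \((x,u)\), and has \(\sup_{x,u}\|\nabla_u v\|=:L<\infty\). For \(\lvert\varepsilon\rvert<\varepsilon_0:=1/(2L)\), each \(T_{\varepsilon,x}\) is a perturbation of the identity by a \(\tfrac12\)-Lipschitz map. It is therefore injective, and surjective by a standard degree or Banach fixed-point argument, with \(\det\nabla_uT_{\varepsilon,x}\ge 2^{-m}\). The change-of-variables formula then gives a positive smooth density
\[
\pi^\varepsilon(u\mid x)=\pi(T_{\varepsilon,x}^{-1}u\mid x)\,\lvert\det\nabla T_{\varepsilon,x}(T^{-1}_{\varepsilon,x}u)\rvert^{-1}.
\]
Outside the compact support \(\mathcal K\subset\R^n\times\R^m\) of \(\varphi\), the policy \(\pi^\varepsilon\) coincides with \(\pi\). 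Its action law differs from \(\cN(-Kx,\Sigma)\) only by a bounded displacement \(\lvert\varepsilon\rvert\,\|v\|_\infty\). Moreover, \(\log\pi^\varepsilon-\log\pi\circ T^{-1}\) is bounded.

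Next we show \(\pi^\varepsilon\in\Pi_{\mathrm{adm}}\). Under \(\pi^\varepsilon\), write \(u_t=-Kx_t+\xi_t+\varepsilon b_t\), where \(b_t=v(x_t,-Kx_t+\xi_t)\) is uniformly bounded. The closed loop is then \(x_{t+1}=F_Kx_t+\eta_t+\varepsilon Bb_t\). Since \(\rho(\sqrt\gamma F_K)<1\), the usual weighted-norm argument gives \(\gamma^t\E\|x_t\|^2\le C\theta^t(1+\|x\|^2)\) for some \(\theta<1\). This yields finiteness of \(V^{\pi^\varepsilon}\) and transversality. The per-step entropy term is also controlled, since it differs from the Gaussian one by a bounded amount.

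For differentiability we use the performance-difference/telescoping argument already employed for Lemma~\ref{lem:perf}. Transversality and quadratic growth of \(V^\pi\) give
\[
C(\pi^\varepsilon)-C(\pi)=\E^{\pi^\varepsilon}\sum_t\gamma^t h_\varepsilon(x_t),
\qquad
h_\varepsilon(x):=\int\bigl(Q^\pi(x,u)+\tau\log\pi^\varepsilon(u\mid x)\bigr)\pi^\varepsilon(\dd u\mid x)-V^\pi(x).
\]
Since \(V^\pi=\cT_\pi V^\pi\), we have \(h_0\equiv0\), and \(h_\varepsilon(x)=0\) for \(x\) outside the state projection of \(\mathcal K\). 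To compute \(h_\varepsilon\), we push forward:
\[
\int Q^\pi\,\dd\pi^\varepsilon=\int Q^\pi(x,T_{\varepsilon,x}u)\,\pi(\dd u\mid x),
\qquad
\int\log\pi^\varepsilon\,\dd\pi^\varepsilon=\int\log\pi\,\dd\pi-\int\log\det\nabla T_{\varepsilon,x}\,\dd\pi.
\]
Both expressions are smooth in \(\varepsilon\). Their \(\varepsilon\)-derivatives at zero are \(\int\langle\nabla_uQ^\pi,v\rangle\,\dd\pi\) and \(-\int\nabla_u\!\cdot v\,\dd\pi=\int\langle\nabla_u\log\pi,v\rangle\,\dd\pi\), the latter by integration by parts with no boundary terms because \(v\) has compact support. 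Hence \(\lvert h_\varepsilon(x)-\varepsilon h'(x)\rvert\le C\varepsilon^2\mathbf 1_{\mathcal K_x}(x)\), uniformly in \(x\). Here \(Q^\pi\) is a fixed quadratic, so its second derivatives are bounded on compact sets.

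The main obstacle is that the outer expectation is taken under \(\pi^\varepsilon\), not \(\pi\). We handle this by writing
\[
\E^{\pi^\varepsilon}\sum_t\gamma^th_\varepsilon(x_t)=\varepsilon\,\E^{\pi^\varepsilon}\sum_t\gamma^th'(x_t)+O(\varepsilon^2),
\]
which holds because the remainder is uniformly bounded and the discount sums. It then suffices to show that \(\E^{\pi^\varepsilon}\sum_t\gamma^t f(x_t)\to\E^{\pi}\sum_t\gamma^t f(x_t)\) for the bounded compactly supported function \(f=h'\). This follows by coupling the two chains with the same \(\xi_t\) and \(w_t\). Then \(\|x_t^\varepsilon-x_t\|\le\lvert\varepsilon\rvert C\sum_{s<t}\|F_K^{t-1-s}\|\), each fixed-\(t\) term converges by dominated convergence (\(h'\) is continuous), and tails are bounded by \(\|h'\|_\infty\gamma^t\). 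This gives the first displayed formula with \((1-\gamma)^{-1}d^\pi_{\cD}\), using the identity \(\int(\cdot)\,\xi_v\,\dd u=\frac{\dd}{\dd\varepsilon}\int(\cdot)\,\dd\pi^\varepsilon\). The Fubini interchange is justified by \(\sum_t\gamma^t\E\|x_t\|^2<\infty\) together with the quadratic growth of \(Q^\pi\) and \(\log\pi\). Finally, as required for Proposition~\ref{prop:wpg_metric_gradient}, the affine field \(g_\pi\) is approximated by \(\chi_R\,g_\pi\), where \(\chi_R\) is a smooth cutoff. The resulting error in the metric norm is bounded by \(\int_{\|(x,u)\|>R}\|g_\pi\|^2\,\dd\pi\,\dd d^\pi_{\cD}\), which tends to \(0\) by finite second moments.
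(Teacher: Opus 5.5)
Your proof is correct, but it reaches the derivative by a different route than the paper's.

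The paper writes \(C(\pi^\varepsilon)\) as an importance-weighted series under the reference policy, \(\sum_t\gamma^t\E_\pi[(c+\tau\log\pi^\varepsilon)L_t^\varepsilon]\) with \(L_t^\varepsilon=\prod_{j\le t}r_\varepsilon(x_j,u_j)\). It differentiates termwise using the bounded score \(s_v=\xi_v/\pi\). This requires shrinking \(\varepsilon_0\) so that \(qe^{c\varepsilon_0}<1\), which makes the discount beat the growth \(e^{c|\varepsilon|(t+1)}\) of the likelihood ratio. It then applies Fubini to the double sum \(\sum_t\sum_{j\le t}\) to recognize the tail cost as \(Q^\pi+\tau\log\pi\). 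This is the classical policy-gradient-theorem derivation. It produces the \(\xi_v\)-form first, and the transport form follows by integration by parts.

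You instead use the performance-difference identity with \(V^\pi\) as a fixed baseline. The problem then reduces to a one-step functional \(h_\varepsilon\) that vanishes at \(\varepsilon=0\) and off a compact state set. You differentiate it by reparameterization through \(T_{\varepsilon,x}\), with a uniform \(O(\varepsilon^2)\) remainder. The only remaining issue is that the outer law is \(\pi^\varepsilon\) rather than \(\pi\). You handle it with a synchronous coupling, which works because the transport enters the linear closed loop as a bounded additive drift and \(h'\) is bounded and continuous.

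What each route buys:
\begin{itemize}
\item Your route avoids likelihood-ratio products and the double-sum interchange entirely, and it yields the transport form directly.
\item The paper's route is more generic: it needs only a bounded score and moment bounds, not a coupling or the growth of \(V^\pi\) along \(\pi^\varepsilon\)-trajectories.
\item The paper's route is also the one that literally contains the ``Fubini interchange'' named in the statement. In your proof, that step is replaced by the telescoping identity, whose interchange of sum and expectation you justify by the same moment bounds.
\end{itemize}

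A side remark on your final sentence, which goes beyond what the lemma asserts. The field \(\chi_Rg_\pi=\chi_R\nabla_uh_\pi\) is not of the form \(\nabla_u\varphi\), so approximating \(g_\pi\) by \(\chi_Rg_\pi\) does not place it in the closure of the gradient test velocities. Take \(\varphi_R=\chi_Rh_\pi\) instead, and control the extra term \(h_\pi\nabla_u\chi_R\). On the cutoff annulus, \(h_\pi\) is quadratic and \(\nabla\chi_R\) is of order \(1/R\), so this term is at most a constant times \(1+\norm{x}+\norm{u}\). Its metric norm therefore tends to zero by finite second moments. This is what the paper does, using separate state and action cutoffs.
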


\begin{proof}
\proofpart{The transport path}
Because \(v\) and \(\nabla_uv\) are bounded and vanish outside a compact
subset of \(\R^n\times\R^m\), choose \(\varepsilon_0>0\) so that
\[
    \lvert\varepsilon\rvert\|\nabla_uv\|_\infty<\frac12
    \qquad
    \text{for }\lvert\varepsilon\rvert<\varepsilon_0.
\]
For every fixed \(x\),
\[
\|T_{\varepsilon,x}(u)-T_{\varepsilon,x}(a)\|
\ge
\bigl(1-\lvert\varepsilon\rvert\|\nabla_uv\|_\infty\bigr)
\|u-a\|.
\]
Thus \(T_{\varepsilon,x}\) is injective and has an invertible Jacobian. It
agrees with the identity outside a compact set in the action variable, and is
therefore proper. Its image is open by the inverse function theorem and closed
by properness. Since \(\R^m\) is connected, the image is all of \(\R^m\), and
the inverse is smooth. The change-of-variables formula gives
\begin{equation}
\label{eq:transport_density_identity}
\pi^\varepsilon(T_{\varepsilon,x}(u)\mid x)
\det\bigl(I_m+\varepsilon\nabla_uv(x,u)\bigr)
=
\pi(u\mid x).
\end{equation}
Hence \(\pi^\varepsilon\) has a positive smooth density. Differentiating at
zero yields
\begin{equation}
\label{eq:transport_density_derivative}
\xi_v=-\nabla_u\cdot(\pi v).
\end{equation}
Writing \(s_v:=\xi_v/\pi\),
\begin{equation}
\label{eq:transport_score}
s_v(x,u)
=
-\nabla_u\cdot v(x,u)
-
\left\langle v(x,u),\nabla_u\log\pi(u\mid x)\right\rangle.
\end{equation}
Since
\(\nabla_u\log\pi(u\mid x)=-\Sigma^{-1}(u+Kx)\) and \(v\) has compact
support in \((x,u)\), the score \(s_v\) is bounded and compactly supported.
Moreover,
\(\int_{\R^m}\xi_v(u\mid x)\,\dd u=0\) for every \(x\).

\proofpart{Uniform discounted moment bound}
Set \(F=A-BK\). Admissibility permits a number
\[
    b\in\bigl(\max\{1,\rho(F)\},\gamma^{-1/2}\bigr)
\]
and a constant \(c_F<\infty\) such that
\(\|F^j\|\le c_Fb^j\) for every \(j\ge0\). A draw from
\(\pi^\varepsilon(\cdot\mid x)\) can be represented as
\[
    u=-Kx+\xi+\varepsilon v(x,-Kx+\xi),
    \qquad
    \xi\sim\cN(0,\Sigma).
\]
The state recursion is therefore
\[
    x_{t+1}
    =Fx_t+B\xi_t+w_t
    +\varepsilon Bv(x_t,-Kx_t+\xi_t).
\]
The last term is uniformly bounded, while \(B\xi_t+w_t\) has a finite
second moment. Iterating the recursion and applying Minkowski's inequality
shows that, for every deterministic initial state \(x\),
\[
\bigl(\E_x^{\pi^\varepsilon}\|x_t\|^2\bigr)^{1/2}
\le
C b^t(1+\|x\|),
\]
uniformly for sufficiently small \(\varepsilon\). The action representation
gives the same type of bound for \(u_t\). Hence, for some
\(C<\infty\) and \(q\in(0,1)\),
\begin{equation}
\label{eq:first_variation_moment_bound}
\gamma^t\E_x^{\pi^\varepsilon}
\left[1+\|x_t\|^2+\|u_t\|^2\right]
\le
Cq^t(1+\|x\|^2),
\qquad t\ge0,
\end{equation}
uniformly for small \(\varepsilon\). One may take any
\(q\in(\gamma b^2,1)\) after increasing \(C\). This proves the discounted
transversality condition and shows that \(\pi^\varepsilon\in\Pi_{\mathrm{adm}}\).
After integration over \(x_0\sim\mathcal D\), the same estimate holds without
the factor \(1+\|x\|^2\) on the right.

The change-of-variables identity also gives, uniformly for small
\(\varepsilon\),
\begin{equation}
\label{eq:perturbed_stage_growth}
\left|c(x,u)+\tau\log\pi^\varepsilon(u\mid x)\right|
\le
C\bigl(1+\|x\|^2+\|u\|^2\bigr).
\end{equation}
Indeed, the Gaussian log density is quadratic, the Jacobian determinant is
bounded above and away from zero, and the transport differs from the identity
only on a fixed compact set.

\proofpart{Differentiation of the discounted series}
On a trajectory generated under the reference policy \(\pi\), define
\[
    r_\varepsilon(x,u)
    :=\frac{\pi^\varepsilon(u\mid x)}{\pi(u\mid x)},
    \qquad
    L_t^\varepsilon
    :=\prod_{j=0}^t r_\varepsilon(x_j,u_j).
\]
The initial-state law and transition kernel are the same under the two
policies, so
\begin{equation}
\label{eq:objective_reference_policy}
C(\pi^\varepsilon)
=
\sum_{t=0}^\infty\gamma^t
\E_\pi\!\left[
\bigl(c(x_t,u_t)+\tau\log\pi^\varepsilon(u_t\mid x_t)\bigr)
L_t^\varepsilon
\right].
\end{equation}
By~\eqref{eq:transport_density_identity} and compact support of \(v\), there
is \(c<\infty\) such that, uniformly in \((x,u)\),
\[
    e^{-c\lvert\varepsilon\rvert}
    \le r_\varepsilon(x,u)
    \le e^{c\lvert\varepsilon\rvert},
    \qquad
    \left|\partial_\varepsilon\log r_\varepsilon(x,u)\right|
    \le c,
\]
for sufficiently small \(\varepsilon\). The same argument bounds
\(\lvert\partial_\varepsilon\log\pi^\varepsilon\rvert\) on the set where it
is nonzero. By~\eqref{eq:perturbed_stage_growth}, the derivative of the
\(t\)-th integrand in~\eqref{eq:objective_reference_policy} is bounded in
absolute value by
\[
C(t+1)e^{c\lvert\varepsilon\rvert(t+1)}
\bigl(1+\|x_t\|^2+\|u_t\|^2\bigr).
\]
Choose \(\varepsilon_0\) smaller if needed so that
\(qe^{c\varepsilon_0}<1\), where \(q\) is from
\eqref{eq:first_variation_moment_bound}. The discounted expectations of these
bounds form a summable series. Dominated convergence therefore justifies
differentiation under the expectation and the infinite sum.

At \(\varepsilon=0\),
\[
\left.\partial_\varepsilon r_\varepsilon(x,u)\right|_{0}
=
\left.\partial_\varepsilon\log\pi^\varepsilon(u\mid x)\right|_{0}
=
s_v(x,u).
\]
Consequently,
\begin{align}
\left.\frac{\dd}{\dd\varepsilon}C(\pi^\varepsilon)\right|_{0}
&=
\sum_{t=0}^\infty\gamma^t\E_\pi\!\left[
\bigl(c(x_t,u_t)+\tau\log\pi(u_t\mid x_t)\bigr)
\sum_{j=0}^t s_v(x_j,u_j)
\right]
\notag\\
&\quad
+
\tau\sum_{t=0}^\infty\gamma^t\E_\pi[s_v(x_t,u_t)].
\label{eq:trajectory_first_variation}
\end{align}
The second sum vanishes because
\[
\E_\pi[s_v(x_t,u_t)\mid x_t]
=
\int_{\R^m}\xi_v(u\mid x_t)\,\dd u
=0.
\]

\proofpart{Fubini interchange}
The score \(s_v\) is bounded. By
\eqref{eq:first_variation_moment_bound},
\[
\sum_{t=0}^\infty\sum_{j=0}^t
\gamma^t\E_\pi\!\left[
|s_v(x_j,u_j)|
\left|c(x_t,u_t)+\tau\log\pi(u_t\mid x_t)\right|
\right]
\le
C\sum_{t=0}^\infty(t+1)q^t
<\infty.
\]
Fubini's theorem therefore permits exchange of the two sums in
\eqref{eq:trajectory_first_variation}. Writing \(t=j+r\) and conditioning on
\((x_j,u_j)\), the conditional expected cost from time \(j\) onward is
\(Q^\pi(x_j,u_j)+\tau\log\pi(u_j\mid x_j)\). Hence
\[
\begin{aligned}
\left.\frac{\dd}{\dd\varepsilon}C(\pi^\varepsilon)\right|_{0}
&=
\sum_{j=0}^\infty\gamma^j\E_\pi\!\left[
 s_v(x_j,u_j)
 \bigl(Q^\pi(x_j,u_j)+\tau\log\pi(u_j\mid x_j)\bigr)
\right]\\
&=
\frac{1}{1-\gamma}
\int_{\R^n}d_{\mathcal D}^{\pi}(\dd x)
\int_{\R^m}
\bigl(Q^\pi(x,u)+\tau\log\pi(u\mid x)\bigr)
\xi_v(u\mid x)\,\dd u.
\end{aligned}
\]
This is the first equality in
\eqref{eq:objective_first_variation_expansion}.

\proofpart{Integration by parts and completion of the tangent space}
For a linear-Gaussian policy,
\(Q^\pi(x,u)+\tau\log\pi(u\mid x)\) is a smooth quadratic function of
\((x,u)\). Since \(v\) has compact support in the action variable,
\eqref{eq:transport_density_derivative} gives, with no boundary term,
\[
\begin{aligned}
&\int_{\R^m}
\bigl(Q^\pi(x,u)+\tau\log\pi(u\mid x)\bigr)
\xi_v(u\mid x)\,\dd u\\
&\qquad=
\int_{\R^m}
\left\langle
\nabla_u\bigl(Q^\pi(x,u)+\tau\log\pi(u\mid x)\bigr),
 v(x,u)
\right\rangle
\pi(\dd u\mid x).
\end{aligned}
\]
The right-hand side is absolutely integrable because the action gradient is
affine and the discounted state and action second moments are finite. This
proves the second equality in
\eqref{eq:objective_first_variation_expansion}.

Finally, let
\[
    g_\pi(x,u)
    :=\nabla_u\bigl(Q^\pi(x,u)+\tau\log\pi(u\mid x)\bigr).
\]
It is the action gradient of a quadratic function \(h_\pi\). Let
\(\chi_R(x)\) and \(\eta_L(u)\) be smooth cutoffs that equal one on balls of
radii \(R\) and \(L\), and define
\[
    \varphi_{R,L}(x,u)
    :=\chi_R(x)\eta_L(u)h_\pi(x,u).
\]
Choose the action cutoff so that \(\|\nabla_u\eta_L\|_\infty\le C/L\).
For fixed \(R\), on the annulus where \(\nabla_u\eta_L\ne0\), the
quadratic growth of \(h_\pi\) gives
\[
    \lvert h_\pi(x,u)\nabla_u\eta_L(u)\rvert
    \le C_R(1+\|u\|).
\]
The remaining cutoff error is bounded by a constant times
\(1+\|x\|+\|u\|\). First letting \(L\to\infty\) for fixed \(R\), and
then letting \(R\to\infty\), the finite second moments and dominated
convergence give
\[
    \nabla_u\varphi_{R,L}\longrightarrow g_\pi
\]
in the metric norm. Thus the affine WPG velocity lies in the completed
tangent space, and compactly supported test velocities are sufficient to
identify the metric gradient.
\end{proof}

\begin{remark}[Linear-Gaussian parameter directions]
Let \(s\mapsto(K_s,\Sigma_s)\) be a \(C^1\) linear-Gaussian parameter curve
through \((K,\Sigma)\), with derivative \((\dot K,\dot\Sigma)\). If
\(L_{\dot\Sigma}\) is the unique symmetric solution of
\[
    L_{\dot\Sigma}\Sigma+\Sigma L_{\dot\Sigma}=\dot\Sigma,
\]
then the action-gradient velocity
\[
    v_{\dot K,\dot\Sigma}(x,u)
    =-\dot Kx+L_{\dot\Sigma}(u+Kx)
\]
satisfies
\[
    \left.\frac{\dd}{\dd s}\pi_{K_s,\Sigma_s}(u\mid x)\right|_{s=0}
    =
    -\nabla_u\cdot\bigl(
    \pi_{K,\Sigma}(u\mid x)v_{\dot K,\dot\Sigma}(x,u)
    \bigr).
\]
The cutoff argument above places this affine velocity in the completed tangent
space, so the policy-space first-variation formula applies directly to every
linear-Gaussian parameter direction.
\end{remark}

\subsection{Proof of closure on the linear-Gaussian class}

\begin{proof}[Proof of Proposition~\ref{prop:gaussian_reduction}]
Fix \((K,\Sigma)\in\cA_{\mathrm{adm}}\), and abbreviate
\[
    \pi:=\pi_{K,\Sigma},
    \qquad
    M:=M_K,
    \qquad
    N:=N_K,
    \qquad
    E:=E_K=MK-N.
\]
For fixed \(x\), set
\[
    z:=u+Kx.
\]
Then \(\pi(\cdot\mid x)=\cN(-Kx,\Sigma)\) and
\[
    \nabla_u\log\pi(u\mid x)=-\Sigma^{-1}z.
\]
By~\eqref{eq:Qquad_reduction},
\[
    \nabla_uQ^\pi(x,u)=2Mu+2Nx.
\]
Since \(u=z-Kx\), the WPG transport field becomes
\[
\begin{aligned}
\nabla_u\bigl(Q^\pi(x,u)+\tau\log\pi(u\mid x)\bigr)
&=
2M(z-Kx)+2Nx-\tau\Sigma^{-1}z\\
&=
\bigl(2M-\tau\Sigma^{-1}\bigr)z-2Ex.
\end{aligned}
\]
Let
\[
    H:=2M-\tau\Sigma^{-1}.
\]
Using \(\nabla_u\pi=-\pi\Sigma^{-1}z\), the right-hand side of WPGF is
\begin{equation}\label{eq:wpg_rhs_gaussian_tangent}
\begin{aligned}
&\nabla_u\cdot\left[
\pi(u\mid x)\bigl(Hz-2Ex\bigr)
\right]\\
&\qquad=
\pi(u\mid x)
\left[
\Tr(H)
-z^\top H\Sigma^{-1}z
+2z^\top\Sigma^{-1}Ex
\right].
\end{aligned}
\end{equation}

We next compute a general tangent vector to the linear-Gaussian family. Differentiating the Gaussian density with respect to \((K,\Sigma)\) in a direction \((\dot K,\dot\Sigma)\) gives
\begin{equation}\label{eq:gaussian_tangent_general}
\begin{aligned}
D\pi_{K,\Sigma}[\dot K,\dot\Sigma](u\mid x)
=
\pi(u\mid x)
\bigg[
&-z^\top\Sigma^{-1}\dot Kx
-\frac12\Tr(\Sigma^{-1}\dot\Sigma)\\
&+\frac12z^\top\Sigma^{-1}\dot\Sigma\Sigma^{-1}z
\bigg].
\end{aligned}
\end{equation}
Choose
\[
    \dot K=-2E,
    \qquad
    \dot\Sigma=-2M\Sigma-2\Sigma M+2\tau I.
\]
The linear term in~\eqref{eq:gaussian_tangent_general} is then
\[
    -z^\top\Sigma^{-1}\dot Kx
    =
    2z^\top\Sigma^{-1}Ex.
\]
For the constant term, cyclicity of the trace gives
\[
\begin{aligned}
-\frac12\Tr(\Sigma^{-1}\dot\Sigma)
&=
\Tr(2M-\tau\Sigma^{-1})
=
\Tr(H).
\end{aligned}
\]
Finally, since \(M\) and \(\Sigma\) are symmetric,
\[
\begin{aligned}
\frac12z^\top\Sigma^{-1}\dot\Sigma\Sigma^{-1}z
&=
-z^\top\bigl(2M-\tau\Sigma^{-1}\bigr)\Sigma^{-1}z\\
&=
-z^\top H\Sigma^{-1}z.
\end{aligned}
\]
Substitution into~\eqref{eq:gaussian_tangent_general} yields exactly~\eqref{eq:wpg_rhs_gaussian_tangent}. This proves the tangent identity~\eqref{eq:tangent_identity} and the parameter velocity~\eqref{eq:KSigma_vector_field}.

The velocity is unique because the derivative of the conditional mean is \(-\dot Kx\) and the derivative of the conditional covariance is \(\dot\Sigma\); these first two moments uniquely determine the parameter velocity for every \(x\). Therefore, if \((K_t,\Sigma_t)\) satisfies~\eqref{eq:KSigma_ode}, the chain rule and~\eqref{eq:tangent_identity} show that \(\pi_t=\pi_{K_t,\Sigma_t}\) satisfies WPGF on every interval on which the parameter path is admissible. This proves exact closure on the linear-Gaussian policy class.
\end{proof}

\subsection{The closed-loop Markov resolvent on
\texorpdfstring{\(\Vtwo\)}{V2}}

\begin{lemma}[Value resolvent on \(\Vtwo\)]\label{lem:v2_resolvent}
Let \((K,\Sigma)\) be admissible. Then
\(\Pop_{K,\Sigma}\colon\Vtwo\to\Vtwo\), and there exist
\(c_{K,\Sigma}<\infty\) and \(\bar q_{K,\Sigma}\in(0,1)\) such that
\[
\|\gamma^j\Pop_{K,\Sigma}^jv\|_{\Vtwo}
\le
c_{K,\Sigma}\bar q_{K,\Sigma}^j\|v\|_{\Vtwo},
\qquad j\ge0.
\]
Consequently,
\[
(I-\gamma\Pop_{K,\Sigma})^{-1}
=
\sum_{j=0}^{\infty}\gamma^j\Pop_{K,\Sigma}^j
\]
as a bounded operator on \(\Vtwo\), and the inverse is positivity preserving.
\end{lemma}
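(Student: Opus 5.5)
The plan is a Lyapunov-weighted bound on the closed-loop second moments. Write \(F:=A-BK\). Under \(\pi_{K,\Sigma}\) the state recursion is \(x_{1}=Fx+\eta\) with \(\E\eta=0\) and \(\Cov(\eta)=\Omega_\Sigma=B\Sigma B^\top+W\), and \(\eta\) is independent of \(x\).

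\emph{Step 1: \(\Pop_{K,\Sigma}\) maps \(\Vtwo\) into itself.} For \(v\in\Vtwo\),
\[
|v(Fx+\eta)|\le\|v\|_{\Vtwo}\bigl(1+\|Fx+\eta\|^2\bigr).
\]
Taking expectations gives
\[
|(\Pop v)(x)|\le\|v\|_{\Vtwo}\bigl(1+\|F\|^2\|x\|^2+\Tr\Omega_\Sigma\bigr)\le c\,\|v\|_{\Vtwo}(1+\|x\|^2).
\]
Measurability and finiteness of the integral follow from the same bound.

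\emph{Step 2: geometric decay of the iterates.} By positivity of \(\Pop\) and monotonicity of the bound in Step 1, \(|\Pop^jv|\le\|v\|_{\Vtwo}\,\Pop^j(1+\|\cdot\|^2)\). It therefore suffices to bound \(\Pop^j\) applied to \(1+\|x\|^2\). Iterating the recursion, \(x_j=F^jx+\sum_{i<j}F^{j-1-i}\eta_i\), where the noise terms are independent and centered, so
\[
\E_x\|x_j\|^2=\|F^jx\|^2+\sum_{i<j}\Tr\bigl(F^{i}\Omega_\Sigma (F^{i})^\top\bigr).
\]
Admissibility \(\rho(\sqrt\gamma F)<1\) allows us to pick \(b\in(\rho(F),\gamma^{-1/2})\) and \(c_F\) with \(\|F^i\|\le c_Fb^i\). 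This is the same device used in Lemma~\ref{lem:conditional_first_variation}; if \(b<1\), we replace \(b\) by \(\max\{b,1\}\), which still lies below \(\gamma^{-1/2}\). Then
\[
\gamma^j\E_x[1+\|x_j\|^2]\le\gamma^j\bigl(1+c_F^2b^{2j}\|x\|^2+c_F^2\Tr\Omega_\Sigma\, j\,b^{2j}\bigr).
\]
Since \(\gamma b^2<1\), we may take any \(\bar q_{K,\Sigma}\in(\gamma b^2,1)\), which also exceeds \(\gamma\), and absorb the polynomial factor \(j\) into a constant \(c_{K,\Sigma}\). The main bookkeeping point is exactly this absorption of the linear-in-\(j\) noise accumulation together with the case \(\rho(F)\ge1\) (allowed, since only \(\rho(\sqrt\gamma F)<1\) is assumed). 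Both are handled by choosing \(b\ge1\) strictly below \(\gamma^{-1/2}\). This yields \(\|\gamma^j\Pop^jv\|_{\Vtwo}\le c_{K,\Sigma}\bar q^j\|v\|_{\Vtwo}\).

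\emph{Step 3: the resolvent.} The Neumann series \(\sum_j\gamma^j\Pop^j\) converges absolutely in operator norm on \(\Vtwo\). Because \(\Pop\) is bounded on \(\Vtwo\) by Step 1, the standard telescoping identity
\[
(I-\gamma\Pop)\sum_{j<J}\gamma^j\Pop^j=I-\gamma^J\Pop^J
\]
holds, and the same identity holds with the factors in the opposite order. The remainder \(\gamma^J\Pop^J\) tends to zero in norm by Step 2, so the series is a two-sided inverse. Finally, each \(\Pop^j\) is an expectation operator and hence maps nonnegative functions to nonnegative functions. A norm-convergent series of positivity-preserving operators on \(\Vtwo\) converges pointwise, so the inverse is positivity preserving.
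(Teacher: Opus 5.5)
Your proof is correct and follows essentially the same route as the paper. Both use the explicit closed-loop second-moment formula, then a geometric bound from $\rho(\sqrt{\gamma}F)<1$ in which the linear-in-$j$ noise accumulation is absorbed by taking $\bar q$ strictly above the base decay rate, and finally the Neumann series with termwise positivity; the differences are cosmetic: you parametrize the decay by $\|F^i\|\le c_F b^i$ with $1\le b<\gamma^{-1/2}$, whereas the paper uses $\gamma^j\|F^j\|^2\le c_F q_F^j$ with $\bar q>\max\{\gamma,q_F\}$, and you spell out the one-step invariance and the two-sided inverse, which the paper leaves implicit.
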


\begin{proof}
Write \(F=F_K\), \(\Omega=\Omega_\Sigma\), and
\(\Pop=\Pop_{K,\Sigma}\). Since
\(\rho(\sqrt{\gamma}F)<1\), there exist \(c_F<\infty\) and
\(q_F\in(0,1)\) such that
\[
\gamma^j\|F^j\|^2\le c_Fq_F^j,
\qquad j\ge0.
\]
For the closed-loop process,
\[
x_j
=
F^jx+\sum_{\ell=0}^{j-1}F^{j-1-\ell}\eta_\ell,
\qquad
\E[\eta_\ell\eta_\ell^\top]=\Omega,
\]
and hence
\[
\E[\|x_j\|^2\mid x_0=x]
=
\|F^jx\|^2
+
\sum_{r=0}^{j-1}
\Tr\!\left(F^r\Omega(F^r)^\top\right).
\]
Choose
\[
\bar q_{K,\Sigma}
\in
\bigl(\max\{\gamma,q_F\},1\bigr).
\]
Using
\(\Tr(F^r\Omega(F^r)^\top)\le\Tr(\Omega)\|F^r\|^2\),
we obtain
\[
\gamma^j\E[\|x_j\|^2\mid x_0=x]
\le
c_{K,\Sigma}\bar q_{K,\Sigma}^j(1+\|x\|^2).
\]
Therefore, for \(v\in\Vtwo\),
\[
\gamma^j|\Pop^jv(x)|
\le
\|v\|_{\Vtwo}
\gamma^j\bigl(1+\E[\|x_j\|^2\mid x_0=x]\bigr),
\]
which gives
\[
\|\gamma^j\Pop^jv\|_{\Vtwo}
\le
c_{K,\Sigma}\bar q_{K,\Sigma}^j\|v\|_{\Vtwo}.
\]

Thus the Neumann series converges absolutely in operator norm. Its partial
sums satisfy
\[
(I-\gamma\Pop)
\sum_{j=0}^{T}\gamma^j\Pop^j
=
I-\gamma^{T+1}\Pop^{T+1},
\]
and the remainder converges to zero in operator norm. Hence the limit is
\((I-\gamma\Pop)^{-1}\). Since every \(\Pop^j\) is positivity preserving,
so is the inverse.
\end{proof}

\subsection{Dissipation identity for the value function}

\begin{proof}[Proof of Lemma~\ref{lem:resolvent}]
Let \(V_t:=V_{K_t,\Sigma_t}\) and
\(\Pop_t:=\Pop_{\pi_t}=\Pop_{K_t,\Sigma_t}\), and define
\[
p_t(\cdot\mid x):=\cN(-M_t^{-1}N_tx,\Xi_t),\qquad \Xi_t:=\frac{\tau}{2}M_t^{-1}.
\]
Equivalently, \(p_t(\cdot\mid x)\) is the Gibbs density associated with the function \(u\mapsto Q^{\pi_t}(x,u)\):
\[
p_t(u\mid x)=Z_t(x)^{-1}\exp\!\left(-\frac{1}{\tau}Q^{\pi_t}(x,u)\right).
\]
By Lemma~\ref{lem:value_representation},
\(V_t(x)=x^\top P_{K_t}x+q_{K_t,\Sigma_t}\), so all quantities below
belong to \(\Vtwo\). For fixed \(t\) and \(x\), introduce the action functional
\[
\mathcal J_t(\nu;x):=\int\Bigl(Q^{\pi_t}(x,u)+\tau\log\nu(u)\Bigr)\nu(u)\,\dd u.
\]
Then the soft Bellman identity reads
\[
V_t(x)=\mathcal J_t(\pi_t(\cdot\mid x);x).
\]
Differentiating this identity in \(t\) gives two contributions:
\[
\dot V_t(x)=\partial_t\mathcal J_t(\pi_t(\cdot\mid x);x)+D_\nu\mathcal J_t(\pi_t(\cdot\mid x);x)[\partial_t\pi_t(\cdot\mid x)].
\]

The first contribution comes only from the variation of \(Q^{\pi_t}\). Since
\[
\partial_t Q^{\pi_t}(x,u)=\gamma\E[\dot V_t(Ax+Bu+w)\mid x,u],
\]
we have
\[
\partial_t\mathcal J_t(\pi_t(\cdot\mid x);x)
=
\int \partial_tQ^{\pi_t}(x,u)\,\pi_t(\dd u\mid x)
=
\gamma\Pop_t\dot V_t(x).
\]

It remains to compute the action derivative. Since \(p_t(u\mid x)=Z_t(x)^{-1}\exp(-Q^{\pi_t}(x,u)/\tau)\),
\[
Q^{\pi_t}(x,u)+\tau\log\nu(u)
=
\tau\log\frac{\nu(u)}{p_t(u\mid x)}-\tau\log Z_t(x).
\]
The last term is constant in \(u\), and therefore disappears when paired with a signed perturbation of total mass zero. Hence, for any admissible perturbation \(\delta\nu\) with \(\int\delta\nu(u)\,\dd u=0\),
\[
D_\nu\mathcal J_t(\pi_t(\cdot\mid x);x)[\delta\nu]
=
\tau\int \log\frac{\pi_t(u\mid x)}{p_t(u\mid x)}\,\delta\nu(u)\,\dd u.
\]
Moreover, by~\eqref{eq:wpgf_re_form}, the WPGF can be written in relative-entropy gradient-flow form as
\[
\partial_t\pi_t
=
\tau\nabla_u\cdot\left(\pi_t\nabla_u\log\frac{\pi_t}{p_t}\right).
\]
Taking \(\delta\nu=\partial_t\pi_t(\cdot\mid x)\) and integrating by parts gives
\[
D_\nu\mathcal J_t(\pi_t(\cdot\mid x);x)[\partial_t\pi_t(\cdot\mid x)]
=
-\tau^2\int \left\|\nabla_u\log\frac{\pi_t(u\mid x)}{p_t(u\mid x)}\right\|^2\pi_t(\dd u\mid x).
\]
Equivalently,
\[
D_\nu\mathcal J_t(\pi_t(\cdot\mid x);x)[\partial_t\pi_t(\cdot\mid x)]
=
-\tau^2 \mathcal I(\pi_t(\cdot\mid x)\|p_t(\cdot\mid x)).
\]
Define
\[
g_t(x):=\tau^2 \mathcal I(\pi_t(\cdot\mid x)\|p_t(\cdot\mid x)).
\]
Combining the two contributions yields
\[
\dot V_t(x)=\gamma\Pop_t\dot V_t(x)-g_t(x),
\]
or equivalently
\[
(I-\gamma\Pop_t)\dot V_t=-g_t.
\]

By Lemma~\ref{lem:v2_resolvent}, the inverse \((I-\gamma\Pop_t)^{-1}\) is well defined on \(\Vtwo\) and admits the positive Neumann-series representation
\[
(I-\gamma\Pop_t)^{-1}=\sum_{k=0}^{\infty}\gamma^k\Pop_t^k.
\]
Therefore
\[
\dot V_t=-(I-\gamma\Pop_t)^{-1}g_t.
\]
Since \(g_t\ge0\) and the resolvent is positivity preserving, we obtain
\[
\dot V_t\le -g_t\le0.
\]
This proves~\eqref{eq:resolvent_identity} and~\eqref{eq:value_monotonicity}.
\end{proof}

\subsection{Objective dissipation along the linear-Gaussian parameter flow}

\begin{lemma}[Objective dissipation]\label{lem:objective_dissipation}
Along every admissible solution of the parameter ODE~\eqref{eq:KSigma_ode}, define
\[
G_t:=M_t-\frac{\tau}{2}\Sigma_t^{-1}.
\]
Then
\[
\dot\Sigma_t=-2(G_t\Sigma_t+\Sigma_tG_t),
\]
and
\[
\frac{\dd}{\dd t}C(K_t,\Sigma_t)
=
-4\Tr\!\left(\Scorr_{K_t,\Sigma_t}E_t^\top E_t\right)
-\frac{4}{1-\gamma}\Tr\!\left(\Sigma_tG_t^2\right)
\le 0.
\]
\end{lemma}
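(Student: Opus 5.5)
The plan is to combine the chain rule with the gradient formulas of Lemma~\ref{lem:perf}. The first claim is pure algebra: with \(G_t=M_t-\frac{\tau}{2}\Sigma_t^{-1}\), we have
\[
-2(G_t\Sigma_t+\Sigma_tG_t)=-2M_t\Sigma_t-2\Sigma_tM_t+\tau I+\tau I,
\]
which is exactly the covariance equation in \eqref{eq:KSigma_ode}.

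For the dissipation identity, Lemma~\ref{lem:perf} shows that \(C\) is differentiable on the admissible set. Lemma~\ref{lem:global}, or simply the standing hypothesis that the solution is admissible, guarantees that the trajectory stays in that set. I would also note that \(P_K\), and hence \(M_K\), \(E_K\) and \(\Scorr_{K,\Sigma}\), depend smoothly on \(K\) through the Lyapunov equation \eqref{eq:PK_lyap}, which has a unique solution depending analytically on \(K\) when \(\rho(\sqrt\gamma F_K)<1\). Consequently \(C\) is \(C^1\), and the chain rule gives
\[
\frac{\dd}{\dd t}C(K_t,\Sigma_t)=\langle\nabla_KC,\dot K_t\rangle_F+\langle\nabla_\Sigma C,\dot\Sigma_t\rangle_F .
\]

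I then substitute the two gradients in \eqref{eq:gradients}. The gain term is
\[
\Tr\!\bigl((2E_t\Scorr_t)^\top(-2E_t)\bigr)=-4\Tr(\Scorr_tE_t^\top E_t),
\]
using symmetry of \(\Scorr_t\) and cyclicity of the trace. The covariance term is
\[
\frac{1}{1-\gamma}\Tr\!\bigl(G_t(-2)(G_t\Sigma_t+\Sigma_tG_t)\bigr)=-\frac{4}{1-\gamma}\Tr(\Sigma_tG_t^2),
\]
where I use that \(G_t\) is symmetric and \(\Tr(G\Sigma G)=\Tr(\Sigma G^2)=\Tr(G^2\Sigma)\).

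Nonpositivity follows from two trace inequalities. First, \(\Tr(\Scorr_tE_t^\top E_t)=\Tr(E_t\Scorr_tE_t^\top)\ge0\) because \(\Scorr_t\succeq0\). Second, \(\Tr(\Sigma_tG_t^2)=\Tr(G_t\Sigma_tG_t)\ge0\) because \(\Sigma_t\succ0\).

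The only delicate point is justifying the chain rule. Lemma~\ref{lem:perf} gives the partial gradients, and its performance-difference identity shows that they arise from a joint expansion. Continuity of \(\Scorr_{K,\Sigma}\) and \(M_K\) in \((K,\Sigma)\) then gives Fréchet differentiability, which is what the chain rule needs. As a cross-check, one could integrate \eqref{eq:value_monotonicity} over \(\cD\) and compute \(\tau^2\mathcal I\) for Gaussians; this should reproduce the same two terms. Indeed, the score difference is \(2G_tz-2E_tx\), whose cross term vanishes in expectation.
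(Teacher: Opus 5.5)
Your proposal is correct and matches the paper's proof: you rewrite the covariance equation through $G_t$, apply the chain rule with the gradients in \eqref{eq:gradients}, and get both terms nonpositive from trace cyclicity and the symmetry and definiteness of $\Scorr_t$ and $\Sigma_t$. One small caution: do not cite Lemma~\ref{lem:global} to keep the trajectory admissible, because the paper proves that lemma using this one; the admissibility hypothesis in the statement is what you need.
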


\begin{proof}
The covariance equation in~\eqref{eq:KSigma_ode} can be rewritten as
\[
\dot\Sigma_t
=
-2M_t\Sigma_t-2\Sigma_tM_t+2\tau I
=
-2(G_t\Sigma_t+\Sigma_tG_t).
\]
Using the gradient identities~\eqref{eq:gradients} and
\(\dot K_t=-2E_t\), we obtain
\begin{align*}
\frac{\dd}{\dd t}C(K_t,\Sigma_t)
&=
\left\langle 2E_t\Scorr_{K_t,\Sigma_t},-2E_t\right\rangle_F
+
\frac{1}{1-\gamma}
\left\langle G_t,-2(G_t\Sigma_t+\Sigma_tG_t)\right\rangle_F \\
&=
-4\Tr\!\left(\Scorr_{K_t,\Sigma_t}E_t^\top E_t\right)
-\frac{4}{1-\gamma}\Tr\!\left(\Sigma_tG_t^2\right).
\end{align*}
Both terms on the right-hand side are nonpositive because
\(\Scorr_{K_t,\Sigma_t}\succ0\), \(\Sigma_t\succ0\), and \(G_t\) is symmetric.
\end{proof}

\section{Stability, Compactness, and Global Well-Posedness}\label{app:stability}

This appendix proves the stability, compactness, and continuation results used for global well-posedness.

\subsection{Closed-loop detectability}

\begin{lemma}[Detectability passes to the closed loop]\label{lem:detectability_pass}
Under Assumption~\ref{ass:standard}, for every feedback matrix \(K\), the scaled pair
\[
(L_K^{1/2},\sqrt{\gamma}F_K)
\]
is detectable.
\end{lemma}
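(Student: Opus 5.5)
We prove detectability by the Popov--Belevitch--Hautus (PBH) eigenvector test, applied to the scaled pair. Recall that \((C,\mathcal A)\) is detectable if and only if there is no vector \(v\neq0\) with \(\mathcal Av=\lambda v\), \(|\lambda|\ge1\), and \(Cv=0\). We suppose, for contradiction, that some \(v\in\mathbb C^n\setminus\{0\}\) satisfies
\[
\sqrt{\gamma}F_Kv=\lambda v,\qquad |\lambda|\ge1,\qquad L_K^{1/2}v=0 .
\]

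First we unpack the output condition. Since \(L_K^{1/2}v=0\), we get \(v^*L_Kv=v^*Qv+v^*K^\top RKv=0\). Both summands are nonnegative because \(Q\succeq0\) and \(R\succ0\), so each vanishes. Hence \(Q^{1/2}v=0\), and \(Kv=0\) since \(R\) is positive definite.

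Next we transfer the eigen-relation to the open loop. Because \(Kv=0\), we have \(F_Kv=Av-BKv=Av\), so \(\sqrt\gamma Av=\lambda v\) with \(|\lambda|\ge1\). Combined with \(Q^{1/2}v=0\), this shows that \(v\) is an unstable mode of \(\sqrt\gamma A\) that is invisible to \(Q^{1/2}\).

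This contradicts Assumption~\ref{ass:standard}(ii). To see this directly from the definition used there, take the matrix \(L\) with \(\rho(\sqrt\gamma A-LQ^{1/2})<1\). Then
\[
(\sqrt\gamma A-LQ^{1/2})v=\sqrt\gamma Av=\lambda v
\]
with \(|\lambda|\ge1\), which is impossible. Therefore no such \(v\) exists, and \((L_K^{1/2},\sqrt\gamma F_K)\) is detectable for every \(K\).

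The only delicate point is the direction of the PBH equivalence. We need that a detectable pair has no unobservable unstable eigenvector. This is the easy direction, and the computation in the previous paragraph proves it directly, so no external theorem is required. We likewise need the converse: the absence of such eigenvectors yields a stabilizing output injection \(L\) for the closed-loop pair. For this direction we would either cite the standard PBH characterization (e.g.\ \citet{anderson2007optimal}) or, if only the eigenvector form is used later in the proof of Lemma~\ref{lem:finite_adm}, state the lemma in that form.
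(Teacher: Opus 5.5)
Your proof is correct and follows the paper's argument essentially step for step. You take an unobservable eigenvector of the closed loop with $|\lambda|\ge1$, use $L_K=Q+K^\top RK$ with $R\succ0$ to get $Q^{1/2}v=0$ and $Kv=0$, deduce $\sqrt{\gamma}Av=\lambda v$, and contradict Assumption~\ref{ass:standard}(ii). Your explicit check against the output-injection definition is a small addition. Your remark about the PBH direction is also accurate: the step ``not detectable $\Rightarrow$ such an eigenvector exists'' is the nontrivial direction, which the paper uses implicitly, and the later uses in Lemma~\ref{lem:finite_adm} and Proposition~\ref{prop:stability_compactness} only need the eigenvector form that you prove directly.
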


\begin{proof}
Suppose, to the contrary, that the pair
\[
(L_K^{1/2},\sqrt{\gamma}F_K)
\]
is not detectable. Then there exist \(v\ne0\) and \(\lambda\in\mathbb C\) such that
\[
F_Kv=\lambda v,
\qquad
|\sqrt{\gamma}\lambda|\ge1,
\qquad
L_K^{1/2}v=0.
\]
Since
\[
L_K=Q+K^\top RK\succeq0
\]
and \(R\succ0\), the identity \(L_K^{1/2}v=0\) implies
\[
Q^{1/2}v=0,
\qquad
Kv=0.
\]
Consequently,
\[
Av
=
F_Kv+BKv
=
\lambda v.
\]
Thus \(v\) is an unobservable eigenvector of the pair
\[
(Q^{1/2},\sqrt{\gamma}A)
\]
with \(|\sqrt{\gamma}\lambda|\ge1\), contradicting Assumption~\ref{ass:standard}.
Therefore \((L_K^{1/2},\sqrt{\gamma}F_K)\) is detectable.
\end{proof}

\subsection{Finite cost and discounted stability}

\begin{lemma}[Finite cost is equivalent to discounted stability]\label{lem:finite_adm}
Under Assumption~\ref{ass:standard}, for every \(K\) and every \(\Sigma\succ0\),
\[
C(K,\Sigma)<\infty
\qquad
\text{if and only if}
\qquad
\rho(\sqrt{\gamma}F_K)<1.
\]
\end{lemma}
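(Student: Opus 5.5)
The proof treats the two implications separately. The reverse implication is essentially contained in Lemma~\ref{lem:value_representation}. The forward implication reduces, through the initial-state excitation \(\Gamma_0\succ0\), to a summability statement for the series \(\sum_t\gamma^t(F_K^t)^\top L_KF_K^t\). Detectability of the closed loop, Lemma~\ref{lem:detectability_pass}, then excludes every discounted-unstable mode.

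\emph{Stability implies finite cost.} If \(\rho(\sqrt\gamma F_K)<1\), then \((K,\Sigma)\) is admissible. Lemma~\ref{lem:value_representation} gives \(C(K,\Sigma)=\Tr(\Gamma_0P_K)+q_{K,\Sigma}\), and both terms are finite because the series defining \(P_K\) converges.

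\emph{Finite cost implies stability: making \(C\) well defined.} Without stability we first need a meaning for \(C\). Conditioning on \(x_t\), the entropy term has the state-independent expectation
\[
\E\bigl[\tau\log\pi_{K,\Sigma}(u_t\mid x_t)\mid x_t\bigr]=-\frac{\tau}{2}\bigl(m+\log((2\pi)^m\det\Sigma)\bigr).
\]
This constant is summable against \(\gamma^t\). The cost term has conditional expectation
\[
\E[c(x_t,u_t)\mid x_t]=x_t^\top L_Kx_t+\Tr(R\Sigma)\ge0.
\]
Hence \(C(K,\Sigma)\) is a finite constant plus the series
\[
\sum_t\gamma^t\,\E\bigl[x_t^\top L_Kx_t\bigr],
\]
whose terms are nonnegative. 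By monotone convergence this series is well defined in \([0,\infty]\), and finiteness of \(C\) is equivalent to its finiteness.

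\emph{Lower bound from the initial state.} Write
\[
x_t=F_K^tx_0+\sum_{\ell<t}F_K^{t-1-\ell}\eta_\ell .
\]
The noises \(\eta_\ell\) have mean zero and are independent of \(x_0\) and of each other, so the cross terms vanish. Moreover, the noise contribution \(\sum_\ell\Tr\bigl(L_KF_K^{r}\Omega_\Sigma(F_K^{r})^\top\bigr)\) is nonnegative. Therefore
\[
\E[x_t^\top L_Kx_t]\ge\Tr\bigl((F_K^t)^\top L_KF_K^t\Gamma_0\bigr)\ge\mu\,\Tr\bigl((F_K^t)^\top L_KF_K^t\bigr).
\]
Finite cost thus gives
\[
\sum_t\gamma^t\|L_K^{1/2}F_K^t\|_F^2<\infty .
\]

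\emph{Detectability argument.} Suppose, for contradiction, that \(F_Kv=\lambda v\) for some \(v\ne0\in\mathbb C^n\) with \(|\sqrt\gamma\lambda|\ge1\). Then
\[
\gamma^t\|L_K^{1/2}F_K^tv\|^2=(\gamma|\lambda|^2)^t\|L_K^{1/2}v\|^2\ge\|L_K^{1/2}v\|^2,
\]
and the left-hand side is bounded by \(\gamma^t\|L_K^{1/2}F_K^t\|_F^2\|v\|^2\), which tends to zero. Hence \(L_K^{1/2}v=0\). This makes \(v\) an unobservable eigenvector of \((L_K^{1/2},\sqrt\gamma F_K)\) with \(|\sqrt\gamma\lambda|\ge1\), contradicting Lemma~\ref{lem:detectability_pass}. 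Therefore \(\rho(\sqrt\gamma F_K)<1\).

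\emph{Main obstacle.} The delicate point is the bookkeeping in the step that makes \(C\) well defined. For an unstable gain, the objective is not a priori an absolutely convergent series, so one must separate the sign-indefinite entropy term, which is a summable constant, from the nonnegative quadratic part before invoking monotone convergence. One must also check that dropping the noise contributions and the cross terms with \(x_0\) is legitimate. This holds because the noise is mean zero and independent, so the cross terms vanish exactly, and this does not depend on Gaussianity of \(x_0\) or \(w_t\).
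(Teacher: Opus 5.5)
Your proof is correct and follows the paper's argument. You separate the constant entropy term, bound $\E[x_t^\top L_Kx_t]$ below by the initial-state contribution using $\Gamma_0\succeq\mu I$ to obtain a finite discounted observability Gramian, and exclude a discounted-unstable eigenvector via Lemma~\ref{lem:detectability_pass}. The only difference is minor: the paper passes to the limit $P$ of the finite-horizon Gramians and tests $P=L_K+\gamma F_K^\top PF_K$ against $v$, whereas you read off $L_K^{1/2}v=0$ directly from $\gamma^t\|L_K^{1/2}F_K^t\|_F^2\to0$, which is slightly shorter; your justification that $C$ is well defined is also more careful than the paper's.
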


\begin{proof}
First assume that
\[
\rho(\sqrt{\gamma}F_K)<1.
\]
Then the Lyapunov series defining \(P_K\) converges. Moreover, the discounted state
correlation
\[
\Scorr_{K,\Sigma}
=
\sum_{t=0}^{\infty}\gamma^t\E[x_tx_t^\top]
\]
is finite. Hence
\[
C(K,\Sigma)
=
\sum_{t=0}^{\infty}\gamma^t
\left\{
\Tr\!\left(L_K\E[x_tx_t^\top]\right)
+
\Tr(R\Sigma)
-
\frac{\tau}{2}
\left(
m+\log\bigl((2\pi)^m\det\Sigma\bigr)
\right)
\right\}
<\infty.
\]

Conversely, suppose that \(C(K,\Sigma)<\infty\). Define
\[
X_t:=\E[x_tx_t^\top].
\]
Since
\[
x_t=F_K^tx_0+\text{zero-mean noise independent of }x_0,
\]
we have
\[
X_t\succeq F_K^t\Gamma_0(F_K^t)^\top.
\]
For \(N\ge0\), define the finite-horizon observability Gramian
\[
P_N
:=
\sum_{t=0}^N
\gamma^t(F_K^t)^\top L_KF_K^t.
\]
Let
\[
c_\Sigma
:=
\Tr(R\Sigma)
-
\frac{\tau}{2}
\left(
m+\log\bigl((2\pi)^m\det\Sigma\bigr)
\right).
\]
Then
\[
C(K,\Sigma)
=
\sum_{t=0}^{\infty}\gamma^t\Tr(L_KX_t)
+
\frac{r_\Sigma}{1-\gamma}.
\]
Therefore, for every \(N\),
\[
\Tr(\Gamma_0P_N)
\le
\sum_{t=0}^N\gamma^t\Tr(L_KX_t)
\le
C(K,\Sigma)-\frac{r_\Sigma}{1-\gamma}
<\infty.
\]
Since \(\Gamma_0\succeq\mu I\), it follows that
\[
\mu\Tr(P_N)
\le
\Tr(\Gamma_0P_N)
\le
C(K,\Sigma)-\frac{c_\Sigma}{1-\gamma}.
\]
The sequence \((P_N)_{N\ge0}\) is monotone increasing in the positive semidefinite order
and has uniformly bounded trace. Hence it converges to some \(P\succeq0\). Passing to
the limit in
\[
P_N
=
L_K+\gamma F_K^\top P_{N-1}F_K
\]
gives
\[
P=L_K+\gamma F_K^\top PF_K.
\]

We now show that the closed loop must be discounted-stable. Suppose instead that
\[
\rho(\sqrt{\gamma}F_K)\ge1.
\]
Then there exist \(v\ne0\) and \(\lambda\in\mathbb C\) such that
\[
F_Kv=\lambda v,
\qquad
|\sqrt{\gamma}\lambda|\ge1.
\]
Testing the Lyapunov identity against \(v\) gives
\[
(1-\gamma|\lambda|^2)v^*Pv
=
v^*L_Kv.
\]
The left-hand side is nonpositive, while the right-hand side is nonnegative. Therefore
\[
v^*L_Kv=0,
\]
and hence
\[
L_K^{1/2}v=0.
\]
This contradicts Lemma~\ref{lem:detectability_pass}. Thus
\[
\rho(\sqrt{\gamma}F_K)<1.
\]
\end{proof}

\begin{remark}[The detectability hypothesis is necessary]
The detectability condition cannot be removed. For instance, if \(Q=0\), then
\[
(0,\sqrt{\gamma}A)
\]
is detectable only when \(\sqrt{\gamma}A\) is already stable. Consider the scalar
example
\[
A=2,
\qquad
B=1,
\qquad
Q=0,
\qquad
R=1,
\qquad
\gamma=\frac12,
\qquad
W=0,
\qquad
\Gamma_0=1,
\qquad
\tau=1.
\]
The unrestricted finite-cost minimizer is
\[
(K,\Sigma)=\left(0,\frac12\right),
\]
which is not discounted-stable. By contrast, the stationarity equations restricted to
the admissible class select a different admissible point, which is strictly suboptimal
for the unrestricted problem. Thus, without Assumption~\ref{ass:standard}(ii), finite
cost and discounted stability need not coincide.
\end{remark}

\subsection{Compactness of finite-cost sublevel sets}

\begin{proof}[Proof of Proposition~\ref{prop:stability_compactness}]
The finite-cost characterization is Lemma~\ref{lem:finite_adm}. It remains
to prove compactness of the sublevel sets.

\proofpart{Boundedness of sublevels} From the Lyapunov equation~\eqref{eq:PK_lyap},
\[
P_K
=
Q+K^\top RK+\gamma F_K^\top P_KF_K
\succeq K^\top RK.
\]
Hence, using~\eqref{eq:CKS}, \(\Gamma_0\succeq \mu I\), and \(R\succeq \lambda_R I\),
\[
\begin{aligned}
C(K,\Sigma)
&\ge
\Tr(\Gamma_0K^\top RK)
+
\frac{1}{1-\gamma}
\left(
\lambda_R\Tr(\Sigma)
-\frac{\tau}{2}\log\det\Sigma
\right)
-\frac{\tau m}{2(1-\gamma)}(1+\log(2\pi))  \\
&\ge
\mu\lambda_R\|K\|_F^2
+
\frac{1}{1-\gamma}
\left(
\lambda_R\Tr(\Sigma)
-\frac{\tau}{2}\log\det\Sigma
\right)
-\frac{\tau m}{2(1-\gamma)}(1+\log(2\pi)).
\end{aligned}
\]
The right-hand side tends to \(+\infty\) if
\(\|K\|_F\to\infty\), if \(\lambda_{\max}(\Sigma)\to\infty\), or if
\(\lambda_{\min}(\Sigma)\downarrow0\). Therefore every sublevel set
\[
\mathcal S_c:=\{(K,\Sigma)\in\cA_{\mathrm{adm}}: C(K,\Sigma)\le c\}
\]
is bounded, and its \(\Sigma\)-components remain in a compact subset of
\(\bbS_{++}^m\).

\proofpart{Closedness of sublevels}
Let \((K_j,\Sigma_j)\in\mathcal S_c\) be a convergent sequence with
\[
K_j\to K,
\qquad
\Sigma_j\to\Sigma\succ0.
\]
By the boundedness just proved, the matrices \(\Sigma_j\) remain in a compact subset of
\(\bbS_{++}^m\). Moreover, \(K_j\) is bounded. Hence the constants \(q_{K_j,\Sigma_j}\) in the value functions are uniformly bounded below: there exists a constant \(C_0\) such that
\[
q_{K_j,\Sigma_j}\ge C_0
\qquad \text{for all } j.
\]
Since \(C(K_j,\Sigma_j)\le c\), we get
\[
\Tr(\Gamma_0P_{K_j})
=
C(K_j,\Sigma_j)-q_{K_j,\Sigma_j}
\le c-C_0.
\]
Using \(\Gamma_0\succeq \mu I\), this implies
\[
\mu\Tr(P_{K_j})
\le
\Tr(\Gamma_0P_{K_j})
\le
c-C_0.
\]
Thus \((P_{K_j})\) is bounded in the cone of positive semidefinite matrices. Passing to a
subsequence if necessary, we may assume
\[
P_{K_j}\to P\succeq0.
\]
Taking limits in the Lyapunov equation
\[
P_{K_j}
=
Q+K_j^\top RK_j+\gamma F_{K_j}^\top P_{K_j}F_{K_j}
\]
gives
\[
P
=
Q+K^\top RK+\gamma F_K^\top PF_K.
\]
By Lemma~\ref{lem:detectability_pass} and the eigenvector argument used in
Lemma~\ref{lem:finite_adm}, this identity with \(P\succeq0\) implies
\[
\rho(\sqrt{\gamma}F_K)<1.
\]
Hence \((K,\Sigma)\) is admissible. On the admissible set, \(K\mapsto P_K\) is continuous,
and therefore \(C\) is continuous. Consequently,
\[
C(K,\Sigma)=\lim_{j\to\infty}C(K_j,\Sigma_j)\le c.
\]
Thus \(\mathcal S_c\) is closed. It is also bounded, its covariance matrices are uniformly positive definite, and every limit point satisfies the discounted-stability condition. Therefore \(\mathcal S_c\) is a compact subset of \(\cA_{\mathrm{adm}}\). In particular, \(C\) attains its minimum on each nonempty finite sublevel set.
By Assumption~\ref{ass:standard}(iii) and Lemma~\ref{lem:finite_adm}, such a
sublevel set is nonempty, and hence a minimizer exists.
\end{proof}

\subsection{Global well-posedness of the closed parameter flow}

\begin{proof}[Proof of Lemma~\ref{lem:global}]
By Assumption~\ref{ass:standard}(iii) and Lemma~\ref{lem:finite_adm}, the finite-cost initialization
\((K_0,\Sigma_0)\) is admissible. On the admissible set, the map
\(K\mapsto P_K\) is smooth. Indeed, the Lyapunov equation can be vectorized as
\[
\mathrm{vec}(P_K)
=
\left(
I-\gamma F_K^\top\otimes F_K^\top
\right)^{-1}
\mathrm{vec}(L_K),
\]
and the inverse exists whenever \(\rho(\sqrt{\gamma}F_K)<1\). Hence the vector
field in~\eqref{eq:KSigma_ode} is smooth on the admissible set, and the standard
ODE theorem gives a unique local solution.

Along this local solution, Lemma~\ref{lem:objective_dissipation} gives
\[
\frac{\dd}{\dd t}C(K_t,\Sigma_t)\le0.
\]
Therefore the trajectory remains in the finite sublevel set
\[
\mathcal S_{C(K_0,\Sigma_0)}
=
\{(K,\Sigma): C(K,\Sigma)\le C(K_0,\Sigma_0)\}.
\]
By Proposition~\ref{prop:stability_compactness}, this sublevel set is a compact subset of the admissible set. Since the trajectory remains in this compact set, it stays bounded, the covariance remains positive definite, and discounted stability is preserved. The standard continuation theorem for ODEs therefore extends the solution uniquely to all \(t\ge0\). Since the trajectory remains in the same
finite sublevel set, every point on the trajectory is admissible and has finite cost.
\end{proof}

\section{Proof of the Main Theorem}\label{app:main}

We use the Bellman resolvent identity and the LQ performance-difference identity to compare objective dissipation with the optimality gap.

\begin{proof}[Proof of Theorem~\ref{thm:main}]
By Lemma~\ref{lem:global}, the trajectory is globally defined, remains
admissible, and has finite cost for all \(t\ge0\).

For brevity, write
\[
C_t:=C(K_t,\Sigma_t),
\qquad
C_\star:=C(K_\star,\Sigma_\star),
\qquad
\Scorr_t:=\Scorr_{K_t,\Sigma_t},
\qquad
\Scorr_\star:=\Scorr_{K_\star,\Sigma_\star},
\]
and
\[
M_t:=M_{K_t},
\qquad
N_t:=N_{K_t},
\qquad
E_t:=E_{K_t},
\qquad
\Xi_t:=\frac{\tau}{2}M_t^{-1}.
\]
Let
\[
p_t(\cdot\mid x)
:=
\cN\!\left(-M_t^{-1}N_tx,\Xi_t\right)
\]
be the Gibbs policy associated with the current value function
\(V_t:=V_{K_t,\Sigma_t}\).

\proofpart{Step 1: Gaussian LSI and objective decrease}

Let \(\Pop_t:=\Pop_{\pi_t}\) denote the closed-loop transition operator
induced by \(\pi_t=\pi_{K_t,\Sigma_t}\), and define
\[
g_t(x)
:=
\tau^2
\mathcal I
\bigl(\pi_t(\cdot\mid x)\|p_t(\cdot\mid x)\bigr).
\]
By the Bellman resolvent identity~\eqref{eq:resolvent_identity},
\[
(I-\gamma\Pop_t)\dot V_t=-g_t.
\]
Therefore,
\[
-\frac{\dd}{\dd t}C_t
=
\E_{x_0\sim\cD}
\left[
\bigl((I-\gamma\Pop_t)^{-1}g_t\bigr)(x_0)
\right].
\]

The Gaussian log-Sobolev inequality~\eqref{eq:lq_lsi} gives
\[
\mathcal I(\nu\|p_t(\cdot\mid x))
\ge
2\alpha_t\KL\bigl(\nu\|p_t(\cdot\mid x)\bigr)
\ge
\frac{4\lambda_R}{\tau}
\KL\bigl(\nu\|p_t(\cdot\mid x)\bigr),
\qquad
\alpha_t
=
\frac{2\lambda_{\min}(M_t)}{\tau}.
\]
Taking \(\nu=\pi_t(\cdot\mid x)\) and using the Bellman residual
identity~\eqref{eq:bellman_kl},
\[
R_t(x)
:=
V_t(x)-(\cT_*V_t)(x)
=
\tau
\KL\bigl(\pi_t(\cdot\mid x)\|p_t(\cdot\mid x)\bigr),
\]
we obtain
\[
g_t(x)
\ge
4\lambda_R R_t(x).
\]
The temperature cancels in this inequality. This step introduces no factor of the form $\exp(-c/\tau)$.
By Lemma~\ref{lem:v2_resolvent}, the resolvent
\[
(I-\gamma\Pop_t)^{-1}
=
\sum_{j=0}^{\infty}\gamma^j\Pop_t^j
\]
is positivity preserving. It follows that
\begin{equation}
\label{eq:lq_lsi_dissipation}
-\frac{\dd}{\dd t}C_t
\ge
4\lambda_R\mathcal R_t,
\end{equation}
where
\[
\mathcal R_t
:=
\E_{x_0\sim\cD}
\left[
\bigl((I-\gamma\Pop_t)^{-1}R_t\bigr)(x_0)
\right].
\]

\proofpart{Step 2: LQ performance comparison}

The Gaussian KL formula~\eqref{eq:bellman_kl} for the Bellman residual is
\[
R_t(x)
=
x^\top E_t^\top M_t^{-1}E_tx+b_t,
\]
where
\[
b_t
:=
\frac{\tau}{2}
\left[
\Tr(\Xi_t^{-1}\Sigma_t)
-
m
-
\log\det(\Xi_t^{-1}\Sigma_t)
\right].
\]
Consequently, by the definition~\eqref{eq:state-correlation} of the discounted state-correlation matrix,
\[
\mathcal R_t
=
\Tr\!\left(
\Scorr_tE_t^\top M_t^{-1}E_t
\right)
+
\frac{b_t}{1-\gamma}.
\]

Recall that
\[
f_{K_t}(\Sigma)
=
\frac{\tau}{2(1-\gamma)}\log\det\Sigma
-
\frac{1}{1-\gamma}\Tr(M_t\Sigma).
\]
Since \(\Xi_t=\frac{\tau}{2}M_t^{-1}\), direct substitution gives
\[
f_{K_t}(\Xi_t)-f_{K_t}(\Sigma_t)
=
\frac{b_t}{1-\gamma}.
\]
Define
\[
B_t
:=
f_{K_t}(\Xi_t)-f_{K_t}(\Sigma_t).
\]
We thus have
\begin{equation}
\label{eq:Rt_expansion_lsi}
\mathcal R_t
=
\Tr\!\left(
\Scorr_tE_t^\top M_t^{-1}E_t
\right)
+
B_t.
\end{equation}

We next compare this expression with the global objective gap. Applying the
performance-difference identity~\eqref{eq:perf_diff} with
\[
(K,\Sigma)=(K_t,\Sigma_t),
\qquad
(K',\Sigma')=(K_\star,\Sigma_\star),
\]
and setting
\[
\Delta K_\star:=K_\star-K_t,
\]
yields
\[
C_t-C_\star
=
-\Tr\!\left(
\Scorr_\star
\left(
\Delta K_\star^\top M_t\Delta K_\star
+
2\Delta K_\star^\top E_t
\right)
\right)
+
f_{K_t}(\Sigma_\star)-f_{K_t}(\Sigma_t).
\]
The gain term satisfies
\[
\begin{aligned}
&\Tr\!\left(
\Scorr_\star
\left(
\Delta K_\star^\top M_t\Delta K_\star
+
2\Delta K_\star^\top E_t
\right)
\right) \\
&\quad =
\left\|
M_t^{1/2}\Delta K_\star\Scorr_\star^{1/2}
+
M_t^{-1/2}E_t\Scorr_\star^{1/2}
\right\|_F^2
-
\Tr\!\left(
\Scorr_\star E_t^\top M_t^{-1}E_t
\right) \\
&\quad \ge
-\Tr\!\left(
\Scorr_\star E_t^\top M_t^{-1}E_t
\right).
\end{aligned}
\]

Moreover, \(f_{K_t}\) is strictly concave on \(\bbS_{++}^m\), and
\[
\nabla_\Sigma f_{K_t}(\Sigma)
=
\frac{1}{1-\gamma}
\left(
\frac{\tau}{2}\Sigma^{-1}-M_t
\right).
\]
Its unique critical point is therefore
\[
\Sigma
=
\frac{\tau}{2}M_t^{-1}
=
\Xi_t,
\]
which is its unique global maximizer. Hence
\[
f_{K_t}(\Sigma_\star)
\le
f_{K_t}(\Xi_t),
\qquad
B_t\ge0.
\]
It follows that
\begin{equation}
\label{eq:gap_upper_lsi}
C_t-C_\star
\le
A_t+B_t,
\qquad
A_t
:=
\Tr\!\left(
\Scorr_\star E_t^\top M_t^{-1}E_t
\right).
\end{equation}

Let
\[
H_t:=E_t^\top M_t^{-1}E_t\succeq0.
\]
Since
\[
\Scorr_t\succeq\Gamma_0\succeq\mu I
\qquad\text{and}\qquad
\Scorr_\star\preceq\norm{\Scorr_\star}I,
\]
we have
\[
\Scorr_t
\succeq
\frac{\mu}{\norm{\Scorr_\star}}\Scorr_\star.
\]
Therefore,
\[
\Tr(\Scorr_tH_t)
\ge
\frac{\mu}{\norm{\Scorr_\star}}
\Tr(\Scorr_\star H_t)
=
\frac{\mu}{\norm{\Scorr_\star}}A_t.
\]
Also, \(\norm{\Scorr_\star}\ge\mu\), and hence, using \(B_t\ge0\),
\[
B_t
\ge
\frac{\mu}{\norm{\Scorr_\star}}B_t.
\]
Combining these inequalities with
\eqref{eq:Rt_expansion_lsi} and \eqref{eq:gap_upper_lsi} gives
\begin{equation}
\label{eq:Rt_gap_compare_lsi}
\mathcal R_t
\ge
\frac{\mu}{\norm{\Scorr_\star}}
(A_t+B_t)
\ge
\frac{\mu}{\norm{\Scorr_\star}}
(C_t-C_\star).
\end{equation}

\proofpart{Step 3: Exponential rate}

Combining~\eqref{eq:lq_lsi_dissipation} and
\eqref{eq:Rt_gap_compare_lsi}, we obtain
\[
-\frac{\dd}{\dd t}C_t
\ge
\frac{4\mu\lambda_R}{\norm{\Scorr_\star}}
(C_t-C_\star).
\]
Defining
\[
\lambda_\tau
:=
\frac{4\mu\lambda_R}{\norm{\Scorr_\star}},
\]
we have
\[
\frac{\dd}{\dd t}(C_t-C_\star)
\le
-\lambda_\tau(C_t-C_\star).
\]
This proves~\eqref{eq:pl_differential}. Since \(C_t\ge C_\star\) by
Proposition~\ref{prop:optimizer}, Gr\"onwall's inequality yields
\[
0
\le
C_t-C_\star
\le
e^{-\lambda_\tau t}(C_0-C_\star).
\]
This proves~\eqref{eq:main_rate}.
\end{proof}

\clearpage

\bibliographystyle{plainnat}
\bibliography{references}

@article{siska2026convergence,
  title={A Note on Convergence of Wasserstein Policy Optimization},
  author={David {\v{S}}i{\v{s}}ka and Yufei Zhang},
  journal={arXiv preprint arXiv:2605.22622},
  year={2026}
}

@article{zhu2026wasserstein,
  title={Wasserstein Proximal Policy Gradient},
  author={Zhu, Zhaoyu and Zhang, Shuhan and Gao, Rui and Li, Shuang},
  journal={arXiv preprint arXiv:2603.02576},
  year={2026},
  eprint={2603.02576},
  archivePrefix={arXiv},
  primaryClass={cs.LG}
}

@inproceedings{terpin2022ottrpo,
  title={Trust Region Policy Optimization with Optimal Transport Discrepancies: Duality and Algorithm for Continuous Actions},
  author={Terpin, Antonio and Lanzetti, Nicolas and Yardim, Batuhan and D{\"o}rfler, Florian and Ramponi, Giorgia},
  booktitle={Advances in Neural Information Processing Systems},
  volume={35},
  pages={19786--19797},
  year={2022},
  publisher={Curran Associates, Inc.}
}

@article{song2024metrictr,
  title={Provably Convergent Policy Optimization via Metric-aware Trust Region Methods},
  author={Song, Jun and He, Niao and Ding, Lijun and Zhao, Chaoyue},
  journal={Transactions on Machine Learning Research},
  year={2023},
}

@inproceedings{arbel2020kwng,
  title={Kernelized Wasserstein Natural Gradient},
  author={Arbel, Michael and Gretton, Arthur and Li, Wuchen and Montufar, Guido},
  booktitle={International Conference on Learning Representations},
  year={2020}
}

@inproceedings{moskovitz2020efficientwng,
  author       = {Ted Moskovitz and
                  Michael Arbel and
                  Ferenc Huszar and
                  Arthur Gretton},
  title        = {Efficient Wasserstein Natural Gradients for Reinforcement Learning},
  booktitle    = {9th International Conference on Learning Representations, {ICLR} 2021,
                  Virtual Event, Austria, May 3-7, 2021},
  publisher    = {OpenReview.net},
  year         = {2021}
}

@inproceedings{pfau2025wpo,
  title={{W}asserstein Policy Optimization},
  author={Pfau, David and Davies, Ian and Borsa, Diana L. and Ara{\'u}jo, Jo{\~a}o Guilherme Madeira and Tracey, Brendan Daniel and Van Hasselt, Hado},
  booktitle={Proceedings of the 42nd International Conference on Machine Learning},
  pages={49128--49149},
  year={2025},
  volume={267},
  series={Proceedings of Machine Learning Research},
  publisher={PMLR}
}

@inproceedings{zhang2018wgf,
  title={Policy optimization as wasserstein gradient flows},
  author={Zhang, Ruiyi and Chen, Changyou and Li, Chunyuan and Carin, Lawrence},
  booktitle={International Conference on Machine Learning},
  pages={5737--5746},
  year={2018},
  organization={PMLR}
}

@inproceedings{fazel2018lqrpg,
  title     = {Global Convergence of Policy Gradient Methods for the Linear Quadratic Regulator},
  author    = {Fazel, Maryam and Ge, Rong and Kakade, Sham and Mesbahi, Mehran},
  booktitle = {Proceedings of the 35th International Conference on Machine Learning (ICML)},
  series    = {Proceedings of Machine Learning Research},
  volume    = {80},
  pages     = {1467--1476},
  year      = {2018},
  publisher = {PMLR}
}

@book{bakry2014analysis,
  title={Analysis and Geometry of Markov Diffusion Operators},
  author={Bakry, Dominique and Gentil, Ivan and Ledoux, Michel},
  series={Grundlehren der mathematischen Wissenschaften},
  volume={348},
  year={2014},
  publisher={Springer}
}

@article{hambly2021lqr,
  title={Policy Gradient Methods for the Noisy Linear Quadratic Regulator over a Finite Horizon},
  author={Hambly, Ben and Xu, Renyuan and Yang, Huining},
  journal={SIAM Journal on Control and Optimization},
  volume={59},
  number={5},
  pages={3359--3391},
  year={2021}
}

@inproceedings{malik2019derivativefree,
  title={Derivative-Free Methods for Policy Optimization: Guarantees for Linear Quadratic Systems},
  author={Malik, Dhruv and Pananjady, Ashwin and Bhatia, Kush and Khamaru, Koulik and Bartlett, Peter L. and Wainwright, Martin J.},
  booktitle={Proceedings of the 22nd International Conference on Artificial Intelligence and Statistics (AISTATS)},
  series={Proceedings of Machine Learning Research},
  volume={89},
  pages={2916--2925},
  year={2019},
  publisher={PMLR}
}

@article{mohammadi2022lqr,
  title={Convergence and Sample Complexity of Gradient Methods for the Model-Free Linear--Quadratic Regulator Problem},
  author={Mohammadi, Hesameddin and Zare, Armin and Soltanolkotabi, Mahdi and Jovanovi{\'c}, Mihailo R.},
  journal={IEEE Transactions on Automatic Control},
  volume={67},
  number={5},
  pages={2435--2450},
  year={2022}
}

@book{anderson2007optimal,
  title={Optimal Control: Linear Quadratic Methods},
  author={Anderson, Brian D. O. and Moore, John B.},
  year={2007},
  publisher={Dover Publications}
}

@article{guo2026fast,
  title   = {Fast Policy Learning for Linear-Quadratic Control with Entropy Regularization},
  author  = {Guo, Xin and Li, Xinyu and Xu, Renyuan},
  journal = {SIAM Journal on Control and Optimization},
  volume  = {64},
  number  = {1},
  pages   = {124--151},
  year    = {2026}
}

@article{zhu2026globalwpg,
  title         = {Global Convergence of Wasserstein Policy Gradient for Entropy-Regularized Reinforcement Learning},
  author        = {Zhu, Zhaoyu and Gao, Rui and Li, Shuang},
  journal       = {arXiv preprint arXiv:2605.26078},
  year          = {2026},
  eprint        = {2605.26078},
  archivePrefix = {arXiv},
  primaryClass  = {cs.LG}
}

@article{giegrich2024convergence,
  title={Convergence of policy gradient methods for finite-horizon exploratory linear-quadratic control problems},
  author={Giegrich, Michael and Reisinger, Christoph and Zhang, Yufei},
  journal={SIAM Journal on Control and Optimization},
  volume={62},
  number={2},
  pages={1060--1092},
  year={2024},
  publisher={SIAM}
}

\end{document}